\documentclass[11pt,a4paper]{amsart}
\usepackage{amssymb,amsmath}
\title[The Joint Spectral Flow and Localization of the Indices]{The Joint Spectral Flow and Localization of the Indices of Elliptic Operators}
\author{Yosuke KUBOTA}
\date{}
\keywords{Index theory, spectral flow, localization, connective $K$-theory, $KK$-theory.}
\subjclass[2010]{Primary 19K56; Secondary 19K35, 19L41.}

\usepackage{amsmath,amssymb}
\usepackage{mathrsfs}
\usepackage[dvipdfmx]{graphicx,color}
\usepackage{wrapfig}
\usepackage{calc}
\usepackage{extarrows}
\usepackage{amsthm}
\usepackage{latexsym}
\usepackage{slashed}
\usepackage{hyperref}

  \makeatletter
  \@addtoreset{equation}{section}
  \makeatother

\usepackage{framed}
\usepackage{ascmac}
\usepackage[all]{xy}

\theoremstyle{definition}
\newtheorem{defn}[equation]{Definition}
\theoremstyle{plain}
\newtheorem{thm}[equation]{Theorem}
\newtheorem{prp}[equation]{Proposition}
\newtheorem{lem}[equation]{Lemma}
\newtheorem{cor}[equation]{Corollary}

\theoremstyle{remark}
\newtheorem{remk}[equation]{Remark}
\newtheorem{exmp}[equation]{Example}
\newtheorem*{rmk*}{Remark}

\DeclareMathAlphabet{\mathfrak}{U}{euf}{m}{n}
\SetMathAlphabet{\mathfrak}{bold}{U}{euf}{b}{n}
\newcommand{\mf}[1]{\mathfrak{#1}}
\newcommand{\ms}[1]{\mathscr{#1}}

\newcommand{\mbb}[1]{\mathbb{#1}}
\newcommand{\mc}[1]{\mathcal{#1}}

\newcommand{\ma}[1]{\begin{align*} #1 \end{align*}}
\newcommand{\maa}[1]{\begin{align} #1 \end{align}}

\newcommand{\real}{\mathbb R}
\newcommand{\comp}{\mathbb C}
\newcommand{\zahl}{\mathbb Z}
\newcommand{\quot}{\mathbb Q}

\DeclareMathOperator{\Hom}{Hom}

\DeclareMathOperator{\End}{End}

\newcommand{\id}{{\rm id}}

\DeclareMathOperator{\dom}{dom}

\DeclareMathOperator{\ind}{ind}

\newcommand{\bk} [1]{\left(  #1 \right)}
\newcommand{\ebk}[1]{\left<  #1 \right>}
\newcommand{\mbk}[1]{\left\{ #1 \right\}}
\newcommand{\lbk}[1]{\left[  #1 \right]}

\newcommand{\ssbk}[1]{\left\| #1 \right\|}

\newcommand{\pmx}[1]{\begin{pmatrix} #1 \end{pmatrix}}%

\newcommand{\ch}{{\rm ch}}

\newcommand{\ral}{\longrightarrow}

\newcommand{\xra}{\xrightarrow}

\newcommand{\Td}{{\rm Td}}

\newcommand{\Map}{{\rm Map}}

\newcommand{\Cliff}{\mathbb{C}\ell }
\newcommand{\pt}{{\rm pt}}

\DeclareMathOperator{\jsf}{jsf}
\DeclareMathOperator{\Sym}{Sym}
\renewcommand{\Re}{{\rm Re} \hspace{0.2em}}
\renewcommand{\Im}{{\rm Im} \hspace{0.2em}}

\begin{document}

\maketitle
\begin{abstract}
We introduce the notion of the joint spectral flow, which is a generalization of the spectral flow, by using Segal's model of the connective $K$-theory spectrum. We apply it for some localization results of indices motivated by Witten's deformation of Dirac operators and rephrase some analytic techniques in terms of topology. 
\end{abstract}

\tableofcontents

\section{Introduction}
In this paper we give a topological viewpoint for the index and its localization phenomena of elliptic operators on certain fiber bundles using the notion of the joint spectral flow, which is a generalization of that of spectral flow introduced by Atiyah-Patodi-Singer~\cite{AtiyahPatodiSinger1976}. It has various generalizations, for example, higher spectral flow given by Dai-Zhang~\cite{DaiZhang1998}, and noncommutative spectral flow by Leichtnam-Piazza~\cite{LeichtnamPiazza2003} and Wahl~\cite{Wahl2007}. However, what we introduce here is a completely different, new generalization. 

The spectral flow for a one-parameter family of self-adjoint operators is an integer counting the number of eigenvalues crossing over zero with multiplicity. In geometric situations, it is related to the index of some Fredholm operator as shown by Atiyah-Patodi-Singer~\cite{AtiyahPatodiSinger1976} as follows. For a one parameter family of self-adjoint elliptic differential operators $D_{t}$ of first order ($t\in S^{1}$) on $\Gamma (Y,E)$, where $Y$ is a closed manifold and $E$ is a hermitian vector bundle on $Y$, a first order differential operator $d/dt+D_{t}$ on $\Gamma (Y\times S^{1}, \pi^{*}E)$ is also elliptic and its index coincides with the spectral flow. Its proof is given essentially by the family's index theorem on the closed $1$-dimensional manifold $S^1$.

The joint spectral flow deals with an $n$-parameter family of $n$-tuples of mutually commuting self-adjoint operators and their joint spectra. We deal with continuous or smooth families of commuting Fredholm $n$-tuples, which are defined in Definition \ref{def:tuple}, and the ``Dirac operators'' associated with them. In the special case of $n=1$, it coincides with the usual spectral flow. We also relate it with the index of some elliptic operator as is the case of the ordinal spectral flow. 
\theoremstyle{plain}
\newtheorem*{fake1}{Theorem \ref{thm:jsftwisted}}
\begin{fake1}
Let $B$ be a closed $n$-dimensional $Spin^c$ manifold, $Z \to M \to B$ a smooth fiber bundle over $B$ such that the total space $M$ is also a $Spin^c$-manifold, $E$ a smooth complex vector bundle over $M$, $V$ an $n$-dimensional $Spin^c$ vector bundle over $B$. For a bundle map $\mbk{D_v(x)}$ from $V \setminus \mbk{0}$ to the bundle of fiberwise pseudodifferential operators $\Psi _f ^1 (M,E)$ satisfying Condition \ref{cond:commtwist}, the following formula holds.
\ma{\ind (\pi ^* \slashed{\mf{D}}_B + D(x))=\jsf (\mbk{D(x)}).}
\end{fake1}
The proof also works in a similar way to the original one. The crucial theorem introduced by Segal~\cite{Segal1977} is that the space of $n$-tuples of mutually commuting compact self-adjoint operators is a model for the spectrum of the connective $K$-group. 

The joint spectral flow and its index formula implies some localization results. In \cite{Witten1982} E. Witten reinterpreted and reproved some localization formulas for the indices of Dirac operators from the viewpoint of supersymmetry. He deformed Dirac operators by adding potential terms coming from Morse functions or Killing vectors. Recently Fujita-Furuta-Yoshida~\cite{FujitaFurutaYoshida2010} used its infinite dimensional analogue to localize the Riemann-Roch numbers of certain completely integrable systems and their prequantum data on their Bohr-Sommerfeld fibers. In this case the indices of Dirac operators on fiber bundles localize on some special fibers instead of points. Here we relate them with our joint spectral flow and give a topological viewpoint for this analytic way of localization. A strong point of our method is that we give a precise way to compute the multiplicity at each point on which the index localizes. As a consequence we reprove and generalize theorems of Witten and Fujita-Furuta-Yoshida.

\newtheorem*{fake2}{Corollary \ref{cor:FFY}}
\begin{fake2}[Andersen~\cite{Andersen1997}, Fujita-Furuta-Yoshida~\cite{FujitaFurutaYoshida2010}]
Let $(X,\omega)$ be a symplectic manifold of dimension $2n$, $\mathbb{T}^n \to X \to B$ a Lagrangian fiber bundle, and $(L,\nabla ^L,h)$ its prequantum data. Then its Riemann-Roch number $RR(M,L)$ coincides with the number of Bohr-Sommerfeld fibers.
\end{fake2}

Finally we consider an operator-theoretic problem. 

Unfortunately, there are not many examples of geometrically important operators (for example Dirac operators) represented as Dirac operators associated with commuting Fredholm $n$-tuple coming from differential operators. Compared the case that their principal symbols are ``decomposed'' as the sum of commuting $n$-tuples, which is the easiest case because it is realized when their tangent bundles are decomposed, the case that the Dirac operators themselves are decomposed is much more difficult because it requires some integrability of decompositions of tangent bundles. However, the bounded operators $\slashed{D}(1+\slashed{D}^2)^{-1/2}$ associated with the Dirac operators $\slashed{D}$ and zeroth order pseudodifferential operators are much easier to deal with than first order differential operators. We glue two commuting $n$-tuples of pseudodifferential operators by using topological methods to show that the family's indices are complete obstructions of decomposing property of families of Dirac operators. Here the theory of extensions of $C^*$-algebra and Cuntz's quasihomomorphism plays an important role.

\newtheorem*{fake3}{Theorem \ref{thm:decomp}}
\begin{fake3}
Let $Z \to M \to B$ be a fiber bundle. We assume that there are vector bundles $V_1,\ldots,V_l$ on $B$ and $E_1,\ldots,E_l$ on $M$ such that the vertical tangent bundle $T_VM$ is isomorphic to $\pi ^*V_1 \otimes E_1 \oplus \cdots \oplus \pi ^*V_l \otimes E_l$. Then its fiberwise Dirac operator $\slashed{D}_f^E$ is $n$-decomposable (in the sense of Definition \ref{def:decomp}) if and only if the family's index $\ind (\slashed{D}_f^E)$ is in the image of $K^n(B, B^{(n-1)}) \to K^n(B)$, or equivalently the image of $\tilde{k}^n(B) \to K^n(B)$. 
\end{fake3}

This paper is organized as follows. In Section \ref{section:2}, we relate Segal's description of the connective $K$-theory with the theory of Fredholm operators. In Section \ref{section:3}, we introduce the notion of the joint spectral flow and prove its index formula. In Section \ref{section:4}, we apply the theory and reprove or generalize some classical facts. In Section \ref{section:5} we deal with a decomposing problem of Dirac operators and give an index theoretic complete obstruction.

\noindent {\bf Conventions.}
We use the following notations throughout this paper. 

First, any topological space is assumed to be locally compact and Hausdorff unless otherwise noted (there are some exceptions, which are mentioned individually). 

Second, we use some terms of topology as follows.
For a based space $(X,*)$, we denote by $\Sigma X$ the suspension $X \times S^1 /(X \times *_{S^1} \cup *_X \times S^1)$ and by $\Omega X$ the reduced loop space $\Map ((S^1,*),(X,*))$. 
On the other hand, for an unbased space $X$ we denote by $\Sigma X$ (resp. $IX$) the direct sum $X \times (0,1)$ (resp. $X \times [0,1]$). Similarly, for a $C^*$-algebra $A$ we denote by $\Sigma A$ (resp. $IA$) its suspension $A \otimes C_0(0,1)$ (resp. $A \otimes C[0,1]$). In particular, we denote by only $\Sigma $ (resp. $I$) the topological space $(0,1)$ or the $C^*$-algebra $C_0(0,1)$ (resp. $[0,1]$ or $C[0,1]$).

\noindent {\bf Acknowledgement.} 
The author would like to thank his supervisor Professor Yasuyuki Kawahigashi for his support and encouragement. He also would like to thank his sub-supervisor Professor Mikio Furuta for suggesting the problem and several helpful comments. This work was supported by the Program for Leading Graduate Schools, MEXT, Japan.

\section{Fredholm picture of the connective K-theory}\label{section:2}

In this section we first summarize the notion of the connective $K$-theory and its relation to operator algebras according to \cite{Segal1977} and \cite{DadarlatNemethi1990}. Then we connect it with a model of the $K$-theory spectrum that is related to the space of Fredholm operators. Finally we generalize the theory for the twisted case.
It is fundamental in order to describe the notion of the joint spectral flow.

Let $\mbk{H^i}_{i \in \zahl}$ be a generalized cohomology theory. We say $\mbk{h_i}_{i \in \zahl}$ is the connective cohomology theory associated to $\mbk{H_i}$ if it is a generalized cohomology theory that satisfies the following two properties.
\begin{enumerate}
\item There is a canonical natural transformation $h^i \to H^i$ that induces an isomorphism $h^i(\pt) \to H^i(\pt)$ for $i \leq 0$.
\item We have $h^i(\pt)=0$ for $i > 0$.
\end{enumerate}
The (reduced) connective $K$-theory is the connective cohomology theory that is associated to the (reduced) $K$-theory.

Segal~\cite{Segal1977} gave an explicit realization of connective $K$-theory spectra by using methods of operator algebras. 

For a pair of compact Hausdorff spaces $(X,A)$, we denote by $F(X,A)$ the configuration space with labels in finite dimensional subspaces of a fixed (separable infinite dimensional) Hilbert space. More precisely, an element of $F(X,A)$ is a pair $(S,\mbk{V_x}_{x \in S})$ where $S$ is a countable subset of $X \setminus A$ whose cluster points are all in $A$ and each $V_x$ is a nonzero finite dimensional subspace of a Hilbert space $\mc{H}$ such that $V_x$ and $V_y$ are orthogonal if $x \neq y$. It is a non-locally compact topological space with its canonical topology that satisfies the following.
\begin{enumerate}
\item When two sequences $\mbk{x_i}$, $\mbk{y_i}$ converge to the same point $z$ and $V_z$ is the limit of $\mbk{V_{i,x_i} \oplus V_{i,y_i}}$, the limit of $(\mbk{x_i,y_i}, \mbk{V_{i,x_i}, V_{i,y_i}})$ is $(\mbk{z}, \mbk{V_z})$.
\item When all cluster points of a sequence $\mbk{x_i}$ are in $A$, the limit of $(\mbk{x_i} , \mbk{V_{i,x_i}})$ is $(\emptyset , \emptyset)$.
\end{enumerate}

Then the following holds for this topological space. 

\begin{prp}
Let $(X,A)$ be a pair of compact Hausdorff spaces. We assume that $X$ is connected, $A$ is path-connected, and $A$ is a neighborhood deformation retract in $X$. Then the space $F(X,A)$ is homotopy equivalent to its subspace $F_{\rm fin}(X,A):=\mbk{(S,\mbk{V_x}_{x \in S}) \in F(X,A); \# S<\infty}$ and a sequence $F_{\rm fin}(A,*) \to F_{\rm fin}(X,*) \to F_{\rm fin}(X,A)$ is a quasifibration. Here morphisms are induced by continuous maps $(A,*) \to (X,*) \to (X,A)$. Hence the map $F(X,A) \to \Omega F(SX,SA)$ induces a homotopy equivalence.
\end{prp}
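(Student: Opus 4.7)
The plan is to address the three assertions in turn, by adapting the classical Dold--Thom arguments for infinite symmetric products to the setting of configurations labeled by finite-dimensional Hilbert subspaces. For the equivalence $F(X,A)\simeq F_{\rm fin}(X,A)$, I would use the NDR structure of $A$ in $X$: pick an open neighborhood $U\supset A$ and a deformation retraction $r_t:U\to U$ with $r_0=\id$, $r_1(U)\subset A$, and $r_t|_A=\id$, extended to all of $X$ by the identity outside $U$. Define $H_t:F(X,A)\to F(X,A)$ by moving each labeled point $x$ to $r_t(x)$, deleting any point that enters $A$, and combining the labels $V_x,V_y$ by direct sum whenever $r_t(x)=r_t(y)$. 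The two convergence axioms defining the topology on $F(X,A)$ are tailored exactly so that $H_t$ is jointly continuous. At $t=1$ the configuration is supported in the compact set $X\setminus U$, and the cluster-point condition then forces it to be finite; hence $H_1$ lands in $F_{\rm fin}(X,A)$.

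For the quasifibration assertion, the projection $p:F_{\rm fin}(X,*)\to F_{\rm fin}(X,A)$ sends a configuration to its restriction to $X\setminus A$, with fiber naturally $F_{\rm fin}(A,*)$. I would apply Dold's local-to-global criterion along the increasing filtration of $F_{\rm fin}(X,A)$ by the total number of labeled points. Around a configuration $c=(S,\mbk{V_x})$ with $k=\#S$, choose disjoint open neighborhoods $U_1,\dots,U_k\subset X\setminus A$ of the points of $S$; on the open set $W\subset F_{\rm fin}(X,A)$ consisting of configurations with at most one point in each $U_i$ and none outside $\bigcup U_i$, the preimage $p^{-1}(W)$ is canonically homeomorphic to $W\times F_{\rm fin}(A,*)$ by appending the labels on $A$ to those on $X\setminus A$. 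Induction on $k$ along the filtration, together with a careful check at the stratification boundary where a point crosses into $A$ and its label is absorbed by direct sum into an existing label at its limit in $A$, then yields the quasifibration property.

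For the delooping, apply the quasifibration to the pair $(CX,\;X\cup CA)$, where $CX=X\times[0,1]/X\times\{1\}$ is the unreduced cone, $X=X\times\{0\}\subset CX$ is the base, and $CA\subset CX$ is the cone on $A$. The cone contraction of $CX$ onto its apex, transplanted to configurations as in the first step, shows that $F_{\rm fin}(CX,*)$ is contractible. Since the inclusion $CA\subset X\cup CA$ is a cofibration with $CA$ contractible, $(X\cup CA,*)\simeq (X/A,*)$ and hence $F_{\rm fin}(X\cup CA,*)\simeq F_{\rm fin}(X,A)$; similarly $(CX,X\cup CA)$ has quotient $SX/SA$, so $F_{\rm fin}(CX,X\cup CA)\simeq F_{\rm fin}(SX,SA)$. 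The long exact sequence of homotopy groups of the quasifibration then gives $\pi_*F(X,A)\cong\pi_{*+1}F(SX,SA)$, realized by the suspension-adjoint map $F(X,A)\to\Omega F(SX,SA)$, which Whitehead's theorem upgrades to a homotopy equivalence.

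The principal obstacle is the verification of Dold's criterion for the quasifibration: the local trivialization $p^{-1}(W)\cong W\times F_{\rm fin}(A,*)$ must be shown to remain compatible under limits when a point of the configuration migrates from $X\setminus A$ into $A$ and its label is merged by direct sum with labels at its limit. Once that local-to-global step is in place, both the reduction to $F_{\rm fin}$ and the delooping follow by the formal arguments sketched above.
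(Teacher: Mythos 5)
The paper itself gives no argument here — it simply cites Segal (Proposition 1.3) and D\u{a}d\u{a}rlat--N\'emethi (Section 3.1) — so your sketch must stand on its own. Its overall Dold--Thom strategy is the right one and is indeed the strategy of the cited sources: the first step (using the NDR structure of $(X,A)$ to push a neighborhood of $A$ into $A$, deleting points that reach $A$ and merging colliding labels by direct sum, so that $F(X,A)$ deformation retracts into $F_{\rm fin}(X,A)$) and the final cone/suspension argument with the long exact sequence of the quasifibration are both essentially correct in outline.

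The genuine gap is in the middle step. The preimage $p^{-1}(W)$ is \emph{not} canonically homeomorphic to $W\times F_{\rm fin}(A,*)$: an element of $p^{-1}(W)$ is a labelled configuration over $X\setminus A$ together with a labelled configuration over $A\setminus\{*\}$, and all labels are finite-dimensional subspaces of the \emph{same} Hilbert space $\mc{H}$ which are required to be mutually orthogonal. Splitting a configuration into its two parts gives a continuous injection $p^{-1}(W)\to W\times F_{\rm fin}(A,*)$, but it is not surjective (only pairs with mutually orthogonal labels occur), so $p$ is not locally trivial over your distinguished sets. This is exactly where labelled configuration spaces with orthogonality constraints differ from symmetric products, and it is the reason the statement is only a quasifibration and requires a nontrivial proof. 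What replaces local triviality in Segal's argument is a fibre-homotopy step exploiting the infinite dimensionality of $\mc{H}$: one shows that the inclusion of the fibre $F_{\rm fin}(A,*)$ into $p^{-1}(W)$, and the comparison maps between fibres over different points, are weak equivalences, by using isometries of $\mc{H}$ onto subspaces of infinite codimension (the space of such isometries being contractible) to move the labels over $A$ away from the labels prescribed over $X\setminus A$. Without this ingredient Dold's criterion cannot be verified: the issue is not only the behaviour at the stratification boundary that you defer, but already the local model over a single distinguished set, which is not the product you assert.
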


\begin{proof}
See Proposition 1.3 of Segal~\cite{Segal1977} and Section 3.1 of D{\u{a}}d{\u{a}}rlat-N{\'e}methi~\cite{DadarlatNemethi1990}.
\end{proof}

This means that $\mbk{F(S^n,*)}_{n=1,2,\ldots}$ is an $\Omega$-spectrum and hence homotopy classes of continuous maps to it realize some cohomology theory. 

Now we introduce two other non-locally compact spaces. First, let $F_n(\mc{H})$ be a space of $(n+1)$-tuples $\mbk{T_i}_{i=0,...,n}$ of self-adjoint bounded operators on $\mc{H}$ that satisfy the following.
\begin{enumerate}
\item The operator $T^2:=\sum T_i^2$ is equal to the identity.
\item The operator $T_i$ commutes with $T_j$ for any $i$ and $j$.
\item The operators $T_i$ ($i=1,2,\ldots,n$) and $T_0-1$ are compact.
\end{enumerate}

Then there is a canonical one-to-one correspondence between $F_n(\mc{H})$ and $F(S^n,*)$. If we have an element $(S, \mbk{V_x})$ of $F(S^n,*)$, then we obtain a $(T_0,\ldots,T_n)$ by setting $T_i:=\sum _{x \in S} x_i P_{V_x}$ where $P_V$ is the orthogonal projection onto $V$ and $x_i$ the $i$-th coordinate of $x$ in $S^n \subset \real ^{n+1}$. Conversely if we have an element $(T_0,\ldots,T_n)$ in $F_n(\mc{H})$, then we obtain data of joint spectra and the eigenspaces because they are simultaneously diagonalizable. Actually this correspondence is homeomorphic. 

On the other hand, if we have an element $(T_0,\ldots, T_n) \in F_n (\mc{H})$, then there is a canonical inclusion from the spectrum of the abelian $C^*$-algebra $C^*(T_0,\ldots,T_n)$ into the unit sphere of $\real ^{n+1}$ according to condition 1. It gives a $*$-homomorphism $C(S^n) \to \mbb{B}(\mc{H})$ sending $x_i$ to $T_i$. Now by virtue of condition 2, the image of its restriction to $C_0(S^n \setminus \mbk{*})$ is in the compact operator algebra $\mbb{K}=\mbb{K}(\mc{H})$. Conversely, if we have a $*$-homomorphism $\varphi : C_0(\real ^n) \to \mbb{K}$, then we obtain an element $(\varphi (x_0), \varphi (x_1),\ldots, \varphi (x_n))$ in $F_n(\mc{H})$. This gives a canonical one-to-one correspondence between $F_n(\mc{H})$ and $\Hom (C_0(\real ^n),\mbb{K})$. This correspondence is also a homeomorphism if we equip $\Hom (C_0(\real ^n) , \mbb{K})$ with the strong topology. Moreover, a continuous family of $*$-homomorphisms $\mbk{\varphi _x}_{x \in X}$ parametrized by a finite CW-complex $X$ is regarded as a $*$-homomorphism $C_0(\real ^n) \to C(X) \otimes \mbb{K} \cong C(X,\mbb{K})$.

\begin{prp}[\cite{Segal1977}, \cite{DadarlatNemethi1990}]
Let $X$ be a finite CW-complex and $n \in \zahl _{>0}$. The three sets
\begin{enumerate}
\item $[X, F(S^n,*)]$
\item $[X, F_n(\mc{H})]$
\item $[C_0(\real ^n) , C(X) \otimes \mbb{K}]$
\end{enumerate}
are canonically mutually isomorphic and form the $n$-th reduced connective $K$-group $\tilde{k}^n(X)$.
Here the first two are the sets of homotopy classes of continuous maps and the third is that of homotopy classes of $*$-homomorphisms.
\end{prp}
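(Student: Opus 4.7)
The plan is to stitch together the three descriptions out of the two homeomorphisms that the text has already exhibited, upgrade them from points to homotopy classes of continuous families parametrized by $X$, and then invoke Segal's theorem~\cite{Segal1977} to identify the resulting functor with reduced connective $K$-theory.

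The first identification is immediate from the canonical homeomorphism $F(S^n,*)\cong F_n(\mc{H})$ sending a configuration $(S,\{V_x\})$ to the spectral tuple $(T_0,\ldots,T_n)$ with $T_i=\sum_{x\in S} x_i P_{V_x}$, inverted by simultaneous diagonalization; post-composition delivers $[X,F(S^n,*)]=[X,F_n(\mc{H})]$. For the second bijection I will use the homeomorphism $F_n(\mc{H})\cong\Hom(C_0(\real^n),\mbb{K})$ (strong topology) that assigns to $(T_0,\ldots,T_n)$ the $*$-homomorphism $\varphi$ determined by $\varphi(x_i)=T_i$. A continuous map $\Phi\colon X \to F_n(\mc{H})$ is then the same datum as a strongly continuous family $\{\varphi_x\}_{x\in X}$ of $*$-homomorphisms $C_0(\real^n)\to\mbb{K}$, which is in turn the same datum as a single $*$-homomorphism $\tilde{\varphi}\colon C_0(\real^n) \to C(X,\mbb{K})\cong C(X)\otimes\mbb{K}$ defined by $\tilde{\varphi}(f)(x)=\varphi_x(f)$. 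Applying the identification with $X$ replaced by $X\times [0,1]$ matches homotopies on both sides and yields $[X,F_n(\mc{H})]=[C_0(\real^n),C(X)\otimes\mbb{K}]$.

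The final step is to recognize this common abelian group as $\tilde{k}^n(X)$. By the preceding proposition the spaces $\{F(S^n,*)\}_{n\geq1}$ form an $\Omega$-spectrum, so $X\mapsto[X,F(S^n,*)]$ is a reduced cohomology theory on based finite CW-complexes. Segal identifies this spectrum with the connective cover of the $K$-theory spectrum: the configuration-space construction is a group-completed $\Gamma$-space whose homotopy groups agree with $\tilde{K}$ in non-positive degrees (equivalently, $\pi_m F(S^n,*)\cong \tilde K^n(S^m)$ for $m\geq n\geq 1$) and vanish in positive degrees, which is exactly the characterization of the connective cover. This pins down the common functor as $\tilde{k}^n(X)$, as claimed.

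The one technical check I anticipate, and the only place where any honest work is needed, is that the strong topology on $\Hom(C_0(\real^n),\mbb{K})$ really corresponds under Step 2 to the continuity required for $\tilde{\varphi}$ to land in the $C^*$-algebra $C(X,\mbb{K})$, rather than merely in the set of functions $X\to\mbb{K}$. This is harmless in our setting because every $\varphi_x$ is given by the purely atomic, finite-dimensional spectral data encoded by the corresponding point of $F_n(\mc{H})$, so over a finite CW-complex $X$ a short approximation argument shows that pointwise strong continuity of $\{\varphi_x\}$ on generators is equivalent to norm continuity of $x\mapsto\varphi_x(f)$ for all $f\in C_0(\real^n)$. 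Once this is in hand, the proof amounts to assembling the two homeomorphisms above with Segal's theorem.
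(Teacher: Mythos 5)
Your proposal is correct and follows essentially the same route as the paper: identify the three sets via the configuration-space/operator-tuple/$*$-homomorphism correspondences already set up in the text, note that $\{F(S^n,*)\}$ is an $\Omega$-spectrum, and then pin down the resulting theory as connective $K$-theory. The only difference is one of explicitness: where you cite Segal's identification of the spectrum with the connective cover, the paper constructs the required natural transformation to $K^n(X)$ concretely as the $KK$-cycle $[\mc{H}\otimes C(X),\varphi,0]$ and computes $\pi_i(F(S^n,*))$ by hand, reducing via the $\Omega$-spectrum property to $[C_0(\real),A]\cong K_1(A)$ for $i\geq n$ and to connectivity of $F(S^{n-i},*)$ for $i<n$.
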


\begin{proof}
We have already seen that these three sets are canonically isomorphic and $\mbk{F(S^n,*)}_{n=1,2,\ldots}$ is an $\Omega$-spectrum. The desired canonical natural transform is a canonical map $\Phi$ from $[C_0(\real ^n), C(X) \otimes \mbb{K}]$ to $KK(C_0(\real^n),C(X) \otimes \mbb{K}) \cong K^n(X)$ that sends a homotopy class $[\varphi ]$ to $[\mc{H}\otimes C(X) , \varphi , 0]$. Hence we only have to compute $\pi _i (F(S^n,*))$. First for a general $C^*$-algebra $A$, $[C_0(\real) , A]$ is isomorphic to $K_1(A)$ because a $*$-homomorphism from $C_0(\real)$ to $A$ is determined by a unitary operator. Hence $[X,F(S^1,*)]$ is isomorphic to $K^1(X)$. In the case $i \geq n$ we have $\pi _i (F(S^n,*)) \cong \pi _{i-n+1} (F(S^1,*)) \cong K^1(\real ^{i-n+1})$ that is $\zahl$ when $i-n$ is even and $0$ when $i-n$ is odd. In the case $i < n $ we have $\pi _i(F(S^n,*)) \cong \pi _0 (F(S^{n-i},*)) \cong 0$ because $F(S^{n-i},*)$ is connected.
\end{proof}

Next we relate this picture to a realization of $K$-theory that uses the space of Fredholm operators.

Atiyah gave a realization of the $K$-theory spectrum in \cite{AtiyahSinger1969}. Let $\Cliff _n$ be the complex Clifford algebra associated to $\comp ^n$ and its canonical inner product, $e_1,\ldots,e_n$ be its canonical self-adjoint generators with relations $e_ie_j+e_je_i =2\delta _{ij}$, and $\mc H$ be a Hilbert space with a $\zahl /2$-grading and a $\zahl /2$-graded $\Cliff _n$-action $c$. Then the (non-locally compact) space of odd bounded self-adjoint Fredholm operators $T$ that commute with the $\Cliff _n$-action and such that $c(e_1) \cdots c(e_n)T|_{\mc{H}^0}$ is neither positive nor negative definite modulo compact operators if $n$ is odd represents the $K^{-n}$-functor. 

Similarly, we represent the $K^n$-functor for $n > 0$ as some space of Fredholm operators. For an ungraded separable infinite dimensional Hilbert space $\mc{H}$, let $\mc{H}_{\Cliff _n}$ be a $\zahl /2$-graded Hilbert $\Cliff _n$-module $\mc{H} \hat{\otimes} \Cliff_n$. Now for $n>0$, let $\mc F_{\Cliff _n}(\mc H)$ be the (non-locally compact) space of odd bounded self-adjoint operators in $\mbb{B}(\mc{H}_{\Cliff _n})$ that is Fredholm, that is, invertible modulo $\mbb{K}(\mc{H}_{\Cliff _n})$. Moreover, if $n$ is odd, we additionally assume that $c(e_1) \cdots c(e_n)T|_{\mc{H}\otimes \Cliff_n^0}$ is neither positive nor negative definite. Then it represents the $K^n$-functor. It can be understood from the viewpoint of Kasparov's $KK$-theory (or bivariant $K$-theory) \cite{Kasparov1980}. As is well-known, the $KK$-theory has various formulations and the original one of Kasparov is deeply related to the theory of Fredholm operators and their indices (see also \cite{Blackadar1998}). For separable $\zahl /2$-graded $C^*$-algebras $A$ and $B$, a cycle in $KK(A,B)$ is of the form $[E,\varphi , F]$ where $E$ is a countably generated $\zahl /2$-graded Hilbert $B$-module, $\varphi $ is a $*$-homomorphism from $A$ to $\mbb{B}(E)$, and $F$ is an odd self-adjoint `Fredholm' operator on $E$ relative to $A$. More precisely, $F$ is an operator in $\mbb{B}(E)$ that satisfies $[\varphi(a) ,F]$, $\varphi(a)(F^2-1)$, and $\varphi(a)(F-F^*)$ are in $\mbb{K}(E)$ for any $a \in A$. 
A continuous family (in the norm topology) of $\Cliff _n$-equivariant odd Fredholm operators $F(x)$ ($x \in X$) gives a cycle $[\mc{H}_{\Cliff _n} \hat{\otimes} C(X) , 1 , F]$ in $KK(\comp , C(X) \hat{\otimes } \Cliff _n )$ by regarding $F$ as an element in $\mbb{B}(\mc H_{\Cliff_n} \otimes C(X))$ by pointwise multiplication. Because this $KK$-cycle depends only on its homotopy class, 
this correspondence gives a map from $[X,\mc F_{\Cliff _n}(\mc{H})]$ to $KK(\comp, C_0(X) \hat{\otimes} \Cliff _n)$. We can see that it is actually an isomorphism by using the equivalence relations called the operator homotopy \cite{Kasparov1980}. Here we do not have to care for addition of degenerate cycles by virtue of the Kasparov stabilization theorem \cite{Kasparov1980b}.

Now we have shown that there is some operator-theoretic description of the connective $K$-theory, but it is not consistent to the Fredholm picture of $KK$-theory and our construction of the $K$-theory spectrum. Next we see that these two are canonically related. 

Both of the two groups $KK(C_0(\real ^n),C(X))$ and $KK(\comp , C(X) \hat{\otimes} \Cliff _n)$ are isomorphic to $K^n(X)$. The canonical isomorphism $KK(C_0(\real ^n),C(X)) \to KK(\comp , C(X) \hat{\otimes} \Cliff _n)$ is given by taking the Kasparov product \cite{Kasparov1980} with the canonical generator of $KK(\comp, C_0(\real ^n) \otimes \Cliff _n)$ from the left. It also has many identifications and here we use the one in \cite{Kasparov1980}. It bases on the Fredholm picture and is of the form $[C_0(\real ^n) \hat{\otimes} \Cliff _n , 1 , C]$ where $C:= \sum c_i x_i (1+|x|^2) ^{-1/2}$. Here $c_i:=c(e_i)$ is the left multiplication of $e_i$ on $\Cliff _n$ that is a $\Cliff _n$-module by the right multiplication. 

Now we apply it for cycles that come from $\varphi \in \Hom (C_0(\real ^n),C(X) \otimes \mbb{K})$. We then have
\ma{&[C_0(\real ^n)\hat{\otimes} \Cliff _n , 1 , C] \otimes_{C_0(\real ^n)} [\mc{H} \hat{\otimes} C(X), \varphi , 0]\\
=&\lbk{C_0(\real ^n) \otimes _{\varphi} (\mc{H} \otimes C(X)) \hat{\otimes} \Cliff _n , 1 , C \otimes _{\varphi} \id }\\
=&\lbk{\mc{E}(\varphi) \hat{\otimes} \Cliff _n, 1 , \sum c_i T_i}.
}
Here we denote by $\mc{E}(\varphi)$ a Hilbert $C(X)$-module $\mbk{\overline{\varphi_x(C_0(\real ^n))\mc{H}}}_{x \in X}$ (more precisely, a subspace of $C(X) \otimes \mc{H}$ that consists of $\mc{H}$-valued functions on $X$ whose evaluations at $x$ are in $\overline{\varphi _x(C_0(\real ^n)\mc{H})}$). A $*$-homomorphism $\varphi : C_0(\real ^n) \to \mbb{B}(\mc{E}(\varphi))$ uniquely extends to $\tilde \varphi :C_b(\real ^n) \to \mbb{B}(\mc{E}(\varphi))$ because $\varphi $ is nondegenerate onto $\mbb{B}(\mc{E}(\varphi))$ (see Section 5 of \cite{Lance1995}). It is defined by the spectral measure and Borel functional calculus on each $\mc{H}_x$. We give $T_i := \tilde{\varphi} (x_i(1+|x|^2)^{-1/2})$.

This can be regarded as the Fredholm picture of connective $K$-theory. However, unfortunately it is not useful for our purpose because $\mc{E}(\varphi)$ may not be locally trivial and hence not a bundle of Hilbert spaces in general. Nonetheless, if a cycle in $\tilde{k}^n(X)$ has a good origin, then we have a better description for it. Actually cycles that arise in geometric contexts have that good origin and they are of our main interest.

\begin{defn}\label{def:tuple}
\item[$\bullet$] An $n$-tuple of bounded self-adjoint operators $(T_1, \ldots, T_n)$ on $\mc H$ is called a {\it bounded commuting Fredholm $n$-tuple} if it satisfies the following.
	\begin{enumerate}
	\item The operator $T^2:=\sum T_i^2$ is in $1+ \mbb{K}(\mc{H})$.
	\item The operator $T_i$ commutes with $T_j$ for any $i$ and $j$.
	\end{enumerate}
We denote by $\mc{F}_n(\mc{H})$ the set of bounded commuting Fredholm $n$-tuples equipped with the norm topology.
\item[$\bullet$] An $n$-tuple of unbounded self-adjoint operators $(D_1, \ldots, D_n)$ on $\mc H$ is an {\it unbounded commuting Fredholm $n$-tuple} if it satisfies the following.
	\begin{enumerate}
	\item The operator $D^2:=\sum D_i^2$ is densely defined, Fredholm, and has compact resolvents.
	\item The operator $D_i$ commutes with $D_j$ for any $i$ and $j$ on $\dom (D^2)^2$.
	\end{enumerate}
We denote the set of unbounded commuting Fredholm $n$-tuples by $\ms{F}_n(\mc{H})$. It is equipped with the strongest topology so that the map $(D^1,\ldots,D^n) \mapsto (D^1 (1+D^2)^{-1/2},\ldots,D^n(1+D^2)^{-1/2})$ is continuous. This definition is an analogue of the Riesz topology of the space of self-adjoint operators. 
\item For a bounded (resp. unbounded) commuting Fredholm $n$-tuple $(T_1 , \ldots , T_n)$ (resp. $(D_1 ,\ldots, D_n)$), we say that an odd self-adjoint operator $T:=c_1 T_1 + \cdots +c_n T_n$ on $\mc{H} \hat \otimes \Cliff _n$ (resp. $D:=c_1 D_1 + \cdots +c_nD_n$ with domain $\dom (D^2)^{1/2}$) is the {\it Dirac operator} associated with $(T_1,\ldots ,T_n)$. For simplicity of notation, hereafter we use the same letter $T$ (resp. $D$) for commuting Fredholm $n$-tuples and Dirac operators associated with it.
\end{defn}

The continuous map $(\overline{\mathbb{D}^n},\partial \mathbb{D}^n) \to (S^n,*)$ that collapses the boundary, more precisely of the form
$$(T_1,\ldots,T_n) \mapsto (2T^2-1,2(1-T^2)^{1/2}T_1,\ldots,2(1-T^2)^{1/2}T_n),$$
which is the unique continuous extension of composition map of the canonical isomorphism between $\mathbb{D}^n$ and $\real ^n$ and the stereographic projection, induces a continuous map $\iota : \mc{F}_n(\mc{H}) \to F_n(\mc{H})$ by functional calculus by definition of the topology on $\ms{F}_n(\mc{H})$. On the other hand, for $(T_1,\ldots,T_n) \in \mc{F}_n(\mc{H})$, the Dirac operator $T$ is in $\mc{F}_{\Cliff _n}(\mc{H})$. This correspondence gives a map from $[X,\mc{F}_n(\mc{H})]$ to $[X,\mc{F}_{\Cliff_n}(\mc{H})] \cong KK(\comp , C(X) \otimes \Cliff _n)$ that means, in a geometric context, to take the index bundle with $\Cliff _n$-module structure for the continuous family of Dirac operators associated with $(T_1 , \ldots , T_n)$. Hence we denote it by $\ind$.

\begin{thm}\label{thm:hom-op}
The following diagram commutes.
\[
\xymatrix{
[X,\mc{F}_n(\mc{H})] \ar[d]_\iota \ar[r]^{\ind \ \ \ \ \ \ \ } & KK(\comp , C(X)\hat \otimes \Cliff _n)  \\
[X,F_n(\mc{H})] \ar[r]^{\Phi \ \ \ \ \ \ \ } & KK(C_0(\real ^n) ,C(X))\ar[u]^{\rotatebox{90}{$\sim$}}.
}
\]
\end{thm}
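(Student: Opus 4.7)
My plan is to unwind both routes around the square applied to a family $(T_1,\dots,T_n)\in[X,\mc{F}_n(\mc{H})]$ and to see that the two resulting $KK$-cycles differ only by a degenerate direct summand.

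Going along the bottom: $\iota$ sends $(T_1,\dots,T_n)$ to $(2T^2-1,\,2(1-T^2)^{1/2}T_1,\dots,2(1-T^2)^{1/2}T_n)$, which via the identification $F_n(\mc{H})\cong\Hom(C_0(\real^n),\mbb{K})$ corresponds to a $\ast$-homomorphism $\varphi:C_0(\real^n)\to C(X)\otimes\mbb{K}$. A direct calculation with stereographic projection from the point $T^2=1$ gives the one identity I actually need, namely $\tilde\varphi(y_i(1+|y|^2)^{-1/2})=T_i$ for the strict extension $\tilde\varphi$. Then $\Phi$ outputs $[\mc{H}\otimes C(X),\varphi,0]$, and the explicit Kasparov-product formula already computed in the excerpt converts this under the right vertical arrow into $[\mc{E}(\varphi)\hat\otimes\Cliff_n,\,1,\,\sum c_iT_i]$.

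Going along the top, $\ind$ records the class $[\mc{H}\otimes C(X)\hat\otimes\Cliff_n,\,1,\,\sum c_iT_i]$ of the associated Clifford-equivariant family of Dirac operators. Commutativity therefore reduces to showing that this class equals $[\mc{E}(\varphi)\hat\otimes\Cliff_n,\,1,\,\sum c_iT_i]$ in $KK(\comp,C(X)\hat\otimes\Cliff_n)$. I would argue this by decomposing $\mc{H}\otimes C(X)=\mc{E}(\varphi)\oplus\mc{N}$ orthogonally; each $T_i$ preserves the splitting because $T_i\varphi(f)=\varphi(y_i(1+|y|^2)^{-1/2}f)$ by strictness of $\tilde\varphi$, and $T_i^*=T_i$. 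On the complement $\mc{N}$ every $\varphi(f)$ vanishes, and applying this to $f=(1+|y|^2)^{-1}\in C_0(\real^n)$ forces $T^2|_{\mc{N}}=1$; together with the Clifford relations $c_ic_j+c_jc_i=2\delta_{ij}$ and the commutativity $T_iT_j=T_jT_i$ this gives $(\sum c_iT_i)^2=T^2=1$ identically on $\mc{N}\hat\otimes\Cliff_n$, so that summand is a degenerate Kasparov cycle and the two classes coincide.

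The main obstacle I expect concerns the submodule $\mc{E}(\varphi)$: the excerpt itself emphasises that it need not be locally trivial, so one must check that it is nevertheless a legitimate complemented Hilbert $C(X)$-submodule of $\mc{H}\otimes C(X)$ and that each $T_i$, viewed as the image under $\tilde\varphi$ of a bounded function, restricts to it. Both points reduce to standard facts about nondegenerate $\ast$-homomorphisms of $C^*$-algebras into endomorphism algebras of Hilbert modules, so once these are in place the degenerate-summand argument closes the diagram.
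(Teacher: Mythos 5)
The central step of your argument---the orthogonal decomposition $\mc{H}\otimes C(X)=\mc{E}(\varphi)\oplus\mc{N}$---is a genuine gap: such a decomposition does not exist in general. Fibrewise, $\mc{E}(\varphi)_x=\overline{\varphi_x(C_0(\real^n))\mc{H}}$ is the orthogonal complement of $\ker(T(x)^2-1)$, and this subspace jumps as eigenvalues of $T(x)^2$ move into and out of $1$; the fibrewise projections therefore need not assemble into an adjointable projection on $\mc{H}\otimes C(X)$, and $\mc{E}(\varphi)$ is in general a closed but \emph{non-complemented} submodule. Concretely, take $X=[0,1]$, $n=1$, and $T_1(x)$ the diagonal operator on $\ell^2$ with entries $(1-x,1,1,\ldots)$: this is a norm-continuous family of bounded commuting Fredholm $1$-tuples, $\mc{E}(\varphi)=\mbk{g\,e_1 : g\in C[0,1],\ g(0)=0}$, its orthogonal complement inside $\mc{H}\otimes C[0,1]$ is $\mbk{\xi : \ebk{\xi(x),e_1}=0 \text{ for all } x}$, and the sum of the two misses, e.g., the constant section $e_1$. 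Your appeal to ``standard facts about nondegenerate $*$-homomorphisms'' does not repair this: $\varphi$ is nondegenerate only as a homomorphism into $\mbb{B}(\mc{E}(\varphi))$, not into $\mbb{B}(\mc{H}\otimes C(X))$, and closed submodules of Hilbert $C^*$-modules are not automatically orthogonally complemented. The fibrewise observations you make (each $T_i(x)$ preserves $\mc{E}(\varphi)_x$, and $T(x)^2=1$ on its orthogonal complement) are correct, but they do not produce a direct summand of Hilbert $C(X)$-modules, so the ``degenerate summand'' never materializes.

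This is exactly the obstacle the paper's proof is designed to avoid: instead of splitting off a complement, it connects $[(\mc{H}\otimes C(X))\hat\otimes\Cliff_n,1,T]$ and $[\mc{E}(\varphi^T)\hat\otimes\Cliff_n,1,T]$ by an explicit homotopy, namely the Kasparov $\comp$-$IC(X)$-bimodule $\lbk{\mc{E}(\varphi^T)\oplus_{{\rm ev}_0}(\mc{H}_{C(X)}\otimes I),\,1,\,T}$ built on the mapping cylinder of the inclusion $\mc{E}(\varphi^T)\subset\mc{H}_{C(X)}$; evaluation at the two ends of $[0,1]$ yields the two cycles, and no complementability (or local triviality) of $\mc{E}(\varphi^T)$ is required. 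To salvage your argument you would have to replace the direct-sum step by such a cylinder construction, or otherwise exhibit a homotopy between the two cycles that does not invert the inclusion $\mc{E}(\varphi^T)\hookrightarrow\mc{H}\otimes C(X)$; as written, the proof does not go through.
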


\begin{proof}
Let $\mbk{T(x)}_{x \in X}:=\mbk{(T_1(x),\ldots,T_n(x))}_{x \in X}$ be a continuous family of bounded commuting Fredholm $n$-tuples and $\varphi ^{T}$ be its image by $\iota$. Then $\Phi \circ \iota [\mbk{T(x)}]$ is of the form $\lbk{\mc{E}(\varphi ^{T}) \hat \otimes \Cliff_n, 1, T}$. Now we give a homotopy connecting $[\ind T] =\lbk{(\mc{H} \otimes C(X)) \hat{\otimes} \Cliff _n,1, T(x)}$ and $\lbk{\mc{E}(\varphi ^{T}) \hat{\otimes} \Cliff_n, 1, T(x)}$ directly. It is given by a Kasparov $\comp$-$IC(X)$-bimodule 
$$\lbk{\mc{E}(\varphi ^{T} ) \oplus_{{\rm ev} _0} (\mc{H}_{C(X)} \otimes I), 1 , T}$$
where $\mc{E}(\varphi ^T ) \oplus_{{\rm ev} _0} (\mc{H}_{C(X)} \otimes I) :=\mbk{(x, f) \in \mc{E}(\varphi ^T ) \oplus (\mc{H}_{C(X)} \otimes I) \mid f(0)=x}$.
\end{proof}

\begin{remk}
For a general locally compact CW-complex we have an analogue of the $K$-theory with compact support. The $K$-group with compact support $K_{\rm cpt}^n(X)$ is defined as the the kernel of the canonical morphism $K^n(X^+) \to K^n(x_0)$ where $X^+$ is the one-point compactification of $X$ and $\mbk{x_0}=X^+ \setminus X$. It coincides with the $K$-group of the nonunital $C^*$-algebra $C_0(X)$ by definition. Similarly we write $k^n_{\rm cpt}(X)$ for the kernel of $k^n(X^+) \to k^n(x_0)$. When $X^+$ has a relatively compact deformation retract of $\mbk{x_0}$, $\tilde{k}^n_{\rm cpt}(X)$ is isomorphic to the homotopy class with compact support of maps from $X$ to $F(S^n,*)$ with compact support, $F_n(\mc{H})$, or $\Hom (C_0(\real ^n, \mbb{K}))$. Hence it is also isomorphic to $\Hom(C_0(\real ^n),C_0(X) \otimes \mbb{K})$. 
In terms of our Fredholm picture, a continuous family of Fredholm $n$-tuples on $X$ which is {\it bounded below} by some $\kappa >0$ (i.e. $D(x)^2 \geq \kappa$) outside some compact subset $K \subset X$ determines a $k^n$-cycle on $X$.
For simplicity we only denote $\tilde{k}(X)$ insted of $\tilde{k}_{\rm cpt}(X)$ in this paper.
\end{remk}

\begin{remk}
The formulation above is compatible with the product of cohomology theories. We define the product of continuous families of bounded commuting Fredholm $n$-tuples $T(x)=(T_1(x), \ldots , T_n(x))$ in $\Map (X, \mc{F}_n(\mc{H}))$ and $m$-tuples $S(x)=(S_1(x),\ldots,S_m(x))$ in $\Map (X, \mc{F}_m(\mc{H}'))$ as follows.
\ma{T(x)\times S(x)=&(T_1(x) , \ldots , T_n(x)) \times (S_1(x),\ldots,S_m(x)) \\
:= &(T_1(x) \otimes 1 ,\ldots, T_n(x) \otimes 1, 1 \otimes S_1(x),\ldots,1 \otimes S_m(x))\\
& \in \Map (X,\mc{F}_{n+m}(\mc{H} \otimes \mc{H}')).}
Then it is, up to homotopy, independent of the choice of $T(x)$ and $S(x)$. Consequently $[\mbk{T(x)}] \cup [\mbk{S(x)}] :=[\mbk{T(x) \times S(x)}]$ gives a well-defined product $[X, \mc{F}_n(\mc{H})] \times [X , \mc{F}_m(\mc{H})] \to [X,\mc{F}_{n+m}(\mc{H})]$ that is compatible with the product of connective $K$-groups, which is induced from the canonical map $(S^n,*) \times (S^m,*) \to (S^n ,*) \wedge (S^m,*) \cong (S^{n+m},*)$. By a similar argument we can define the product for unbounded commuting Fredholm $n$-tuples.
\end{remk}

\subsection*{twisted case}
Next, we generalize the above theory for the twisted connective $K$-theory. In the above argument, we have used the action of the Clifford algebra $\Cliff _n$ as the coefficients to construct a Dirac operator associated with a family of commuting Fredholm $n$-tuples. Now we regard it as the Clifford algebra bundle $\Cliff (\underline{\comp ^n})$ associated with the trivial bundle. We generalize the notion of the commuting Fredholm $n$-tuple and apply the general Clifford algebra bundles $\Cliff (V _\comp)$ associated with $Spin^c$ vector bundles $V$ for the coefficients of the Dirac operators associated with them.

We consider the canonical actions of $GL(n;\real)$ on the spaces $F(S^{n},*)$, $F_{n}(\mc{H})$, and $\Hom (C_0(\real ^{n}),C(X) \otimes \mbb{K})$. For example, on $F_{n+m}(\mc{H})$ it is of the form
$$g \cdot (T_0,T_1,\ldots ,T_{n}):=\bk{\sum g_{1j}T_j,\ldots,\sum g_{nj}T_j}.$$
Let $V$ be a real vector bundle over $X$. We denote a fiber bundle $GL(V) \times _{GL(n;\real)} F(S^n,*)$ (resp. $F_n(\mc{H})$) by $F_V$ (resp. $F_{V}(\mc{H})$). Similarly, $GL(n,\real )$ acts on the space of bounded (resp. unbounded) commuting Fredholm $n$-tuples $\mc{F}_{n}(\mc{H})$ (resp. $\ms{F}_{n}(\mc{H})$) and we denote by $\mc{F}_{V}(\mc{H})$ (resp. $\ms{F}_{V}(\mc{H})$) the corresponding fiber bundle.

\begin{defn}
A $V$-twisted family of bounded (resp. unbounded) commuting Fredholm $n$-tuples is a continuous section $T=T(x) \in \Gamma (X, \mc{F}_{V}(\mc{H}))$ (resp. $\Gamma (X , \ms {F}_{V}(\mc{H}))$).
\end{defn}

In the similar way to the above argument, the space of continuous sections $\Gamma \Cliff (V) =\Gamma (X, \Cliff(V))$ is a $C^*$-algebra and a continuous section $T \in \Gamma (X,\mc{F}_{V}(\mc{H}))$ defines a Kasparov $\comp$-$\Gamma \Cliff (V)$-bimodule
$$\lbk{\mc{H} \hat{\otimes} \Cliff (V), 1, c(e_1)T_{e_1}(x)+\cdots +c(e_n)T_{e_n}(x)},$$
which is independent of the choice of an orthonormal basis $\mbk{e_1,\ldots ,e_n} \in V_x$. Therefore we obtain a map $ \pi _0 (\Gamma (X,\mc{F})) \to KK(\comp , \Gamma \Cliff (V))$. 

Moreover, the following hold. 

\begin{prp}
Let $X$ be a finite CW-complex and $V$ a real vector bundle. The three sets
\begin{enumerate}
\item $\Gamma(X, F_V)$
\item $\Gamma(X, F_{V}(\mc{H}))$
\item $\Hom _{C(X)}(C_0(V) , C(X) \otimes \mbb{K})$
\end{enumerate}
are canonically mutually homeomorphic and their connected conponents form the twisted reduced connective $K$-group associated with the principal bundle $GL(V) \times _{GL(n,\real)}\mc{G}_k^{\rm mod}$, which we denote by $\tilde{k}^{V}(X)$ (see Section 3 of ~\cite{AtiyahSegal2004}). Here $\Hom _{C(X)}(C_0(V \times \real ^k) , C(X) \otimes \mbb{K})$ is the set of $C(X)$-homomorphisms, that is, $*$-homomorphisms that is compatible with their $C(X)$-module structures.
\end{prp}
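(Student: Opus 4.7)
The plan is to bootstrap the twisted statement from the three canonical homeomorphisms of the untwisted proposition by passing through the associated bundle construction, and then verify compatibility with Atiyah--Segal's model of twisted connective $K$-theory.

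First I would observe that the three canonical homeomorphisms
\ma{F(S^n,*) \cong F_n(\mc{H}) \cong \Hom (C_0(\real ^n), \mbb{K})}
of the preceding untwisted proposition are $GL(n;\real)$-equivariant on the nose. For the $F(S^n,*) \leftrightarrow F_n(\mc{H})$ correspondence this is immediate from the formula $T_i = \sum_{x \in S} x_i P_{V_x}$, since a linear change of coordinates in $\real^n$ acts in parallel on both sides; for $F_n(\mc{H}) \leftrightarrow \Hom(C_0(\real^n),\mbb{K})$ the equivariance comes from $\varphi \mapsto (\varphi(x_0),\ldots,\varphi(x_n))$ composed with the precomposition action on $C_0(\real^n)$. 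Consequently, applying the associated bundle functor $GL(V) \times_{GL(n;\real)} (-)$ produces literally isomorphic fiber bundles $F_V$, $F_V(\mc{H})$, and $\Hom(C_0(\real^n),\mbb{K})_V$ over $X$. Taking continuous sections yields the first two claimed homeomorphisms.

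Next I would identify $\Gamma(X, \Hom(C_0(\real^n),\mbb{K})_V)$ with $\Hom_{C(X)}(C_0(V), C(X) \otimes \mbb{K})$. The key point is that $C_0(V)$ is canonically a $C_0(X)$-algebra whose fiber at $x$ is $C_0(V_x) \cong C_0(\real^n)$ (the isomorphism only being canonical up to the $GL(n;\real)$-action on $\real^n$), so any $C(X)$-linear $*$-homomorphism $\varphi : C_0(V) \to C(X) \otimes \mbb{K}$ decomposes as a family $\mbk{\varphi_x : C_0(V_x) \to \mbb{K}}_{x \in X}$ depending continuously on $x$ in the strong topology. This is exactly a continuous section of the twisted bundle, and the inverse construction assembles a compatible family into a $C(X)$-homomorphism by evaluation. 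The continuity issues are handled by the same strong-topology arguments as in the untwisted case together with partitions of unity on $X$.

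Finally, to identify the connected components with $\tilde{k}^V(X)$ in the sense of Atiyah--Segal~\cite{AtiyahSegal2004}, I would invoke the fact that $F_n(\mc{H})$ (equivalently $F(S^n,*)$) is a model $\mc{G}_k^{\rm mod}$ for the $n$-th space of the connective $K$-theory $\Omega$-spectrum together with its canonical $GL(n;\real)$-action. By definition, twisted cohomology with respect to a principal $GL(n;\real)$-bundle and an action on a spectrum is the set of homotopy classes of sections of the associated bundle, which gives exactly $\pi_0 \Gamma(X, F_V(\mc{H})) = \tilde{k}^V(X)$.

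The main obstacle I expect is not the set-theoretic bijection but the verification that the $GL(n;\real)$-equivariant homeomorphisms of the untwisted proposition are strict (not just up to homotopy) so that the associated bundle construction applies literally, and that the topology on $\Gamma(X, F_V(\mc{H}))$ coming from the twisted bundle agrees with the strong-operator topology on $C(X)$-homomorphisms; the latter requires care because the bundle structure on $\Hom(C_0(\real^n),\mbb{K})$ is only defined up to the action on $\mbb{K}$ via a stabilization choice, and one must check Atiyah--Segal's consistency conditions match the ones implicit in the Segal construction.
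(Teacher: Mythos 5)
The paper states this proposition without proof: it is presented as the evident twisted analogue of the untwisted proposition (whose proof is itself deferred to Segal and D{\u{a}}d{\u{a}}rlat--N{\'e}methi), and the identification of the connected components with $\tilde{k}^V(X)$ is taken to be the Atiyah--Segal definition of a cohomology group twisted by a principal bundle acting on the spectrum. Your proposal supplies essentially the argument the paper implicitly relies on: equivariance of the three untwisted identifications, the associated-bundle construction, the dictionary between sections of the twisted $\Hom$-bundle and $C(X)$-linear $*$-homomorphisms out of the $C_0(X)$-algebra $C_0(V)$, and the definitional step at the end. In outline this is correct and is the intended route.

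One step is stated more glibly than it deserves: the claim that $F(S^n,*)\leftrightarrow F_n(\mc{H})$ is strictly equivariant ``since a linear change of coordinates acts in parallel on both sides.'' The coordinates $x_i$ in $T_i=\sum_{x\in S}x_iP_{V_x}$ are coordinates of $S^n\subset\real^{n+1}$, and the $GL(n;\real)$-action on $S^n$ (the one-point compactification of the linear action on $\real^n$) is not linear in these coordinates; correspondingly the naive action $(T_1,\ldots,T_n)\mapsto(\sum_j g_{1j}T_j,\ldots)$ does not preserve the relation $\sum_{i\geq 0}T_i^2=1$ (the paper's own displayed formula for the action has the same defect). The repair is simply to define the actions on $F_n(\mc{H})$ and on $\Hom(C_0(\real^n),\mbb{K})$ by transporting the action on $S^n$, respectively the precomposition action on $C_0(\real^n)$, through the fixed homeomorphisms; equivariance is then tautological and the associated-bundle step, together with your section-versus-$C(X)$-homomorphism identification for $C_0(V)$, goes through. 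Your closing worry about a stabilization choice on the $\mbb{K}$ side is not needed: the twist acts only on the $C_0(\real^n)$ variable, so the $\mbb{K}$ factor is untouched by the associated-bundle construction.
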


\begin{thm}
Let $X$ be a finite CW-complex. Then the following diagram commutes.
\[
\xymatrix{
\pi _0( \Gamma (X,\mc{F}_{V}(\mc{H}))) \ar[d]_\iota \ar[r]^{\ind \ \ \ \ \ \ \ } & KK(\comp , \Gamma \Cliff(V))  \\
\pi _0 (\Gamma (X,F_{V}(\mc{H}))) \ar[r]^{\Phi \ \ \ \ \ \ \ } & \mc{R}KK(X;C_0(V) ,C(X))\ar[u]^{\rotatebox{90}{$\sim$}}.
}
\]
Here $\mc{R}KK(X;C_0(V),C(X))$ is the representable $KK$-group~\cite{Kasparov1988}. 
\end{thm}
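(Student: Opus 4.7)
The plan is to adapt the argument of Theorem~\ref{thm:hom-op} to the twisted setting, replacing the trivial Clifford algebra $\Cliff _n$ by the Clifford algebra bundle $\Cliff (V)$ and working throughout with Hilbert $\Gamma \Cliff (V)$-modules. The structure is unchanged: first identify the Kasparov cycle implementing the right-hand vertical isomorphism, then compute $\Phi \circ \iota$ as an explicit Kasparov product, and finally interpolate between the two resulting cycles by a direct Kasparov $\comp$-$I\Gamma \Cliff (V)$-bimodule.

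The vertical isomorphism $\mc{R}KK(X; C_0(V), C(X)) \xrightarrow{\sim} KK(\comp, \Gamma \Cliff (V))$ is Kasparov product over $C_0(V)$ with the twisted Bott generator $\beta _V \in \mc{R}KK(X; C(X), C_0(V) \otimes _{C(X)} \Gamma \Cliff (V))$, represented by $\lbk{C_0(V) \otimes_{C(X)} \Gamma \Cliff (V), 1, C_V}$ with $C_V = \sum c(e_i) x_i (1+|x|^2)^{-1/2}$ expressed in any local orthonormal frame $\mbk{e_i}$ of $V$; since the formula is $O(V)$-equivariant, $\beta _V$ descends canonically from the associated bundle construction along the frame bundle $GL(V)$. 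For a section $T \in \Gamma(X, \mc{F}_V(\mc{H}))$, write $\varphi^T := \iota(T) \in \Hom_{C(X)}(C_0(V), C(X) \otimes \mbb{K})$; locally, this is the simultaneous functional calculus of the commuting tuple, so it agrees fiberwise with the untwisted construction. Performing the Kasparov product of $\beta _V$ with $\lbk{\mc{H} \hat{\otimes} C(X), \varphi^T, 0} \in \mc{R}KK(X; C_0(V), C(X))$ by the same prescription as in Theorem~\ref{thm:hom-op} yields the cycle
\[
\lbk{\mc{E}(\varphi^T) \hat{\otimes}_{C(X)} \Gamma \Cliff (V), 1, T},
\]
where $\mc{E}(\varphi^T)$ is the fiberwise closure $\mbk{\overline{\varphi^T_x(C_0(V_x)) \mc{H}}}_{x \in X}$ and $T$ is the Dirac operator associated with $\varphi^T$.

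On the other hand, $\ind T$ is by definition the cycle $\lbk{(\mc{H} \otimes C(X)) \hat{\otimes}_{C(X)} \Gamma \Cliff (V), 1, T}$ on the full Hilbert module, and the two cycles are connected by the direct Kasparov $\comp$-$I\Gamma \Cliff (V)$-bimodule
\[
\lbk{\bk{\mc{E}(\varphi^T) \oplus_{{\rm ev}_0} (\mc{H}_{C(X)} \otimes I)} \hat{\otimes}_{C(X)} \Gamma \Cliff (V), 1, T}
\]
exactly as in the proof of Theorem~\ref{thm:hom-op}: evaluation at $0$ gives the $\Phi \circ \iota$-cycle, and evaluation at $1$ gives $\ind T$. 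The main subtlety I expect is the global well-definedness of each object (the Hilbert module $\mc{E}(\varphi^T)$, the Dirac operator $T$, and the interpolating bimodule) across the charts of $V$; this is handled by observing that every formula used is manifestly $GL(n,\real)$-equivariant with respect to the frame bundle action and therefore descends uniformly along the associated-bundle constructions defining $\mc{F}_V(\mc{H})$, $F_V(\mc{H})$, and $\Gamma \Cliff (V)$.
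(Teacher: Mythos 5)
Your proposal is correct and is exactly the argument the paper intends: the paper omits the proof of this theorem precisely because it is the word-for-word adaptation of the proof of Theorem \ref{thm:hom-op}, replacing $\Cliff_n$ by $\Gamma\Cliff(V)$, the Bott class by its fiberwise (frame-independent) version, and using the same mapping-cylinder bimodule $\mc{E}(\varphi^T)\oplus_{{\rm ev}_0}(\mc{H}_{C(X)}\otimes I)$ to interpolate. Your closing remark on descent along the frame bundle (equivalently, independence of the local orthonormal frame, as the paper notes when defining the twisted Dirac cycle) is the only point needing care, and you handle it correctly.
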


In the same as the case of the $K$-theory, the Thom isomorphism holds for the twisted connective $K$-theory. 
\begin{prp}
The following isomorphism holds.
$$k^{W} (X) \cong k^{\pi^*V \oplus \pi ^*W}(V)$$
\end{prp}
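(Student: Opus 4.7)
My plan is to build a Thom map as cup product with a canonical Thom class, and prove it is an isomorphism by a standard two-step reduction: Mayer--Vietoris to the case of a trivial bundle, and a Whitney sum argument to a trivial line bundle, where the statement becomes the suspension isomorphism inherent in Proposition 1.3.

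First I would construct the Thom class $\lambda_V \in k^{\pi^*V}(V)$ from the tautological section $\tau : V \to \pi^*V$, $v \mapsto (v,v) \in V \times_X V$, which vanishes exactly on the zero section. In the $F_V$-picture one sends $v \in V$ to the one-point configuration with a single $\comp$-label located at the normalized image of $\tau(v)$ in the fiber sphere of $\pi^*V$ over $v$; this defines a compactly supported section of $F_{\pi^*V}$ over $V$, and is the twisted analogue of the identity map $\real^n \to S^n$ representing the generator of $\tilde k^n_{\rm cpt}(\real^n)$. Equivalently, in the $C^*$-algebraic picture it is the $C_0(V)$-linear $*$-homomorphism $C_0(\pi^*V) \to C_0(V) \otimes \mbb{K}$ sending $f$ to $(f \circ \Delta) \cdot P$, where $\Delta : V \to \pi^*V$ is the diagonal and $P$ is a rank-one projection on $\mc{H}$. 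The Thom map is then $T_V(\alpha) := \lambda_V \cup \pi^*\alpha$, using the pullback $\pi^* : k^W(X) \to k^{\pi^*W}(V)$ and the external product of tuples extended to the twisted setting exactly as in the untwisted remark above.

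To show $T_V$ is an isomorphism, I would choose a finite good cover of $X$ over which $V$ is trivializable and induct on its cardinality, using the Mayer--Vietoris sequence for $k^\bullet$ that follows from the quasifibration property of $F_V$ (Proposition 1.3 applied fiberwise). Naturality of $\lambda_V$ under open restriction makes $T_V$ a morphism of Mayer--Vietoris sequences, and the five-lemma reduces the problem to $V = X \times \real^n$. The multiplicativity $\lambda_{V_1 \oplus V_2} = \lambda_{V_1} \cup \lambda_{V_2}$ (after identifying the total space of $V_1 \oplus V_2$ with that of $\pi_1^* V_2$ over $V_1$) then reduces further to a trivial line bundle $V = X \times \real$; in this last case $T_V$ agrees, up to obvious identifications, with the structure map $F_W(\mc{H}) \to \Omega F_{W \oplus \underline{\real}}(\mc{H})$ of the twisted $\Omega$-spectrum, which is a homotopy equivalence by Proposition 1.3.

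The main obstacle I expect is the Mayer--Vietoris step in the compactly supported twisted setting: one must verify both that the sequence is exact for our non-locally compact spectrum spaces and that $T_V$ commutes with boundary maps. Both assertions are essentially formal consequences of the quasifibration property, but require careful bookkeeping around the compact-support condition and the action of $GL(V)$ on the fibers. Once this is established, the remaining geometric content is the suspension isomorphism of Proposition 1.3, iterated $n$ times.
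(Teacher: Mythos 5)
Your proposal is correct and follows essentially the same route as the paper: the paper's local morphism $\varphi \mapsto \id_V \otimes \varphi$ is exactly your cup product with the tautological (diagonal) Thom class, declared an isomorphism when $V$ is trivial and then globalized by Mayer--Vietoris. Your extra reduction of the trivial case to the line-bundle case and the spectrum structure map of Proposition 1.3 merely spells out what the paper leaves implicit in the phrase ``which is isomorphic if $V$ is trivial on $F$.''
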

\begin{proof}
Let $F$ be a closed subspace of $X$ and denote by $V_F$ the restriction $V|_F$ of vector bundles $V$. Then there is a morphism
\ma{\Hom _{C(F)} (C_0(W_F),C(F)\otimes \mbb{K}) &\ral \Hom_{C_0(V_F)} (C_0(\pi^*(V \oplus W)_{V_F}),C_0(V_F)\otimes \mbb{K})\\
\varphi &\longmapsto \id _{V} \otimes \varphi ,}
which is isomorphic if $V$ is trivial on $F$, and functorial with respect to inclusions. The Mayer-Vietoris exact sequence implies the global isomorphism.
\end{proof}
In particular, combining with the Thom isomorphism of the connective $K$-theory, we obtain the fact that the twist associated with $V$ is trivial if $V$ has a $Spin^c$ structure.

\section{The joint spectral flow}\label{section:3}
Now we give the precise definition of the joint spectral flow by using the notions introduced in Section \ref{section:2}. Next we prove an index theorem that generalizes the spectral flow index theorem of Atiyah-Patodi-Singer~\cite{AtiyahPatodiSinger1976}. Finally we generalize it for the case in which coefficients $c_i$ are globally twisted by a $Spin^c$ vector bundle. 

\subsection{Definitions and an index theorem}\label{section:3.1}

In the previous section we have seen that $F(S^n,*)$ represents the connective $K$-theory. Now we introduce another configuration space $P(X,A)$ with labels in positive integers on $X$ relative to $A$. More precisely, an element of $P(X,A)$ is a pair $(S,\mbk{n_x}_{x \in S})$ where $S$ is a countable subset of $X \setminus A$ whose cluster points are all in $A$ and each $n_x$ is a positive integer. Its topology is introduced in the same way as that of $F(X,A)$. Then $P(S^n,*)$ is canonically homotopy equivalent to the infinite symmetric product of $(S^n,*)$ that is a model of the Eilenberg-Maclane space $K(\zahl , n)$ by virtue of the Dold-Thom theorem~\cite{DoldThom1958}. There is a canonical continuous map $j$ from $F(S^n,*)$ to $P(S^n,*)$ ``forgetting'' data about vector spaces except for their dimensions, which is more precisely given by
$$(S,\mbk{V_x}_{x \in S}) \longmapsto (S,\mbk{\dim V_x}_{x \in S}).$$
In the viewpoint of commuting Fredholm $n$-tuples it forgets their eigenspaces and keeps only their joint spectra with multiplicity. 
It induces a group homomorphism
$$j_*: \tilde{k}^n(X) \ral H^n(X; \zahl).$$
Now we introduce the notion of the joint spectral flow.

\begin{defn}\label{def:jsf}
Let $X$ be an oriented closed manifold of dimension $n$. For a continuous family $\mbk{T(x)}=\mbk{(T_0(x),\ldots ,T_n(x))}_{x \in X}$ of elements in $F^n(\mc{H})$ parametrized by $X$, we say that $\ebk{j_* [\mbk{T(x)}] , [X]} \in \zahl$ is its {\it joint spectral flow} and denote it by $\jsf(\mbk{T(x)})$. For a continuous family of bounded (resp. unbounded) commuting Fredholm $n$-tuple $\mbk{T_1,\ldots,T_n}$, we say $\jsf(\iota \mbk{T(x)})$ is its joint spectral flow and denote it simply by $\jsf(\mbk{T(x)})$. 
\end{defn}

\begin{exmp}[the case of $n=1$]
According to Section 7 of \cite{AtiyahPatodiSinger1976}, the spectral flow is defined as the canonical group isomorphism ${\rm sf} : \pi _1 (F_1(\mc{H})) \to \zahl$ as follows. For a continuous map $T : S^1 \to F_1(\mc{H})$ whose essential spectrum is $\mbk{-1,1}$, there is a family of continuous functions $j_i:[0,1] \to [-1,1]$ such that $-1 =j=0 \leq j_1 \leq \cdots \leq j_m =1$ and $\sigma (T(t))=\mbk{j_0(t),\ldots,j_m(t)}$ for any $t \in [0,1]$. Then we obtain the integer $l$ such that $j_k(1)=j_{k+l}(0)$ for any $k$. This $l$ is called the spectral flow. Now let $\mbk{T(t)}$ be a continuous family of bounded self-adjoint Fredholm operators such that $\sigma (T(t))=\mbk{0,(t+1)/2,1}$ and the eigenspace $E_{(t+1)/2}$ is of dimension $1$. Then by definition its spectral flow ${\rm sf} (\mbk{T(t)})$ is equal to $1$. On the other hand, we obtain $j_*(\mbk{T(t)})=1 \in H^1(S^1 ;\zahl)$ since the canonical inclusion $S^1 \to \Sym ^\infty (S^1,*)$ gives a generator $1 \in H^1(S^1;\zahl) \cong [S^1, \Sym ^\infty (S^1,*)]$(see \cite{DoldThom1958} or Proposition5.2.23 of \cite{AguilarGitlerPrieto2002}). It means that the joint spectral flow coincides with the ordinary spectral flow in the case of $X=S^1$.
\end{exmp}

\begin{prp}\label{prp:nat_j}
The homomorphism $j_*$ is a natural transform of multiplicative cohomology theories.
\end{prp}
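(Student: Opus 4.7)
The plan is to split the claim cleanly into naturality and multiplicativity, and to verify both at the level of representing spaces rather than in cohomology.

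Naturality I would dispose of in one sentence: the assignment $(S,\{V_x\}) \mapsto (S, \{\dim V_x\})$ is by construction a continuous based map $j : F(S^n,*) \to P(S^n,*)$, and $[-, j]$ is a natural transformation between the represented functors $\tilde k^n(-) = [-, F(S^n,*)]$ and $H^n(-;\zahl) = [-, P(S^n,*)]$, so compatibility with pullback along any continuous map $X \to Y$ is automatic.

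The substantive content is multiplicativity. My plan is to exhibit a strictly commuting diagram of pairings on the representing level. On the connective $K$-theory side, the product of commuting Fredholm tuples from the Remark preceding the proposition translates, via the homeomorphism $\mc F_n(\mc H) \cong F_n(\mc H) \cong F(S^n,*)$, into the pairing
\[ \mu^k : F(S^n,*) \wedge F(S^m,*) \to F(S^{n+m},*) \]
that smashes the underlying supports through $S^n \wedge S^m \cong S^{n+m}$ and tensors the vector-space labels (after fixing $\mc H \hat\otimes \mc H \cong \mc H$). On the Eilenberg--MacLane side, the Dold--Thom identification $P(S^n,*) \simeq K(\zahl,n)$ realizes the cup product by the pairing
\[ \mu^H : P(S^n,*) \wedge P(S^m,*) \to P(S^{n+m},*) \]
that multiplies integer labels. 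The key observation is then the one-line identity $\dim(V_x \hat\otimes W_y) = \dim V_x \cdot \dim W_y$, which forces the diagram
\[
\xymatrix{
F(S^n,*) \wedge F(S^m,*) \ar[r]^{\ \ \ \ \mu^k} \ar[d]_{j \wedge j} & F(S^{n+m},*) \ar[d]^j \\
P(S^n,*) \wedge P(S^m,*) \ar[r]^{\ \ \ \ \mu^H} & P(S^{n+m},*)
}
\]
to commute on the nose. Applying $\pi_0 \Map(X, -)$ then gives $j_*(a \cup b) = j_*(a) \cup j_*(b)$, and unitality is obvious because the unit of $\tilde k^0$ is a one-point configuration with a one-dimensional label, which $j$ sends to the one-point configuration with label $1$, i.e.\ the generator of $H^0(\pt;\zahl)$.

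I do not expect a genuine obstacle: the entire verification reduces to the multiplicativity of dimension for tensor products. The only step that merits a pointer rather than a direct argument is the identification of the integer-label pairing on $P(S^\bullet,*)$ with the usual cup product in singular cohomology; this is standard from Dold--Thom and can be cited from \cite{DoldThom1958} or \cite{AguilarGitlerPrieto2002} as already done in the preceding example.
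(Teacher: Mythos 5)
Your multiplicativity argument is sound and in fact makes explicit what the paper leaves implicit: both pairings are modelled on configurations over $(S^n,*)\wedge (S^m,*)\cong (S^{n+m},*)$, and $j$ intertwines them because $\dim (V_x\otimes W_y)=\dim V_x\cdot \dim W_y$; this is exactly the paper's one-line justification for multiplicativity, fleshed out. The gap is in the other half. A natural transform of cohomology theories must commute with the suspension isomorphisms (equivalently, with the connecting maps of pairs), and this is not delivered by the observation that $j_*=[-,j]$ is natural for pullbacks: representability in each degree only gives a collection of natural transformations of set-valued functors, one for each $n$. What has to be checked is that the maps $j:F(S^n,*)\to P(S^n,*)$ commute, up to homotopy, with the structure maps of the two $\Omega$-spectra; this is precisely what the paper's proof spends its effort on, introducing the explicit homotopy inverses $S:F(S^n,*)\to \Omega F(S^n\times I, S^n\times \{0,1\}\cup \{*\}\times I)$ and its analogue for $P$, and observing that the square formed with the two $j$'s commutes by definition. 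This stability is not a formality you may skip here: the paper immediately uses it to extend $j_*$ to the unreduced theories via $k^*(X)\cong \tilde{k}^{*+1}(SX)$ and to invoke Dold's classification of transformations of multiplicative cohomology theories in Proposition \ref{prp:ch}, both of which need compatibility with suspension, not merely with pullbacks.

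The omission is fixable within your own framework: in this configuration model the structure map is, up to homotopy, the pairing with the canonical one-point configuration labelled by $\comp$ (respectively by the integer $1$), so suspension compatibility follows from your strictly commuting pairing square together with $\dim \comp =1$; alternatively, check directly, as the paper does, that $j$ intertwines the maps $S$, which is immediate from the formulas since $j$ only forgets the labels while retaining their dimensions. As written, however, the proposal establishes degreewise naturality and multiplicativity but not that $j_*$ is a transformation of cohomology theories.
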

\begin{proof}
According to Section 3 of D{\u{a}}d{\u{a}}rlat-N{\'e}methi~\cite{DadarlatNemethi1990}, 
\ma{S: \Hom (C_0(\real ^n) , \mbb{K}) &\ral \Hom (C_0(\real ^{n+1}), C_0(\real) \otimes \mbb{K})\\
\varphi &\longmapsto \id _{\real} \otimes \varphi}
or equivalently
\ma{S: F(S^n,*) &\ral \Omega F(S^n \times I,S^n \times \mbk{0,1} \cup \mbk{*} \times I)\\
(S,\mbk{V_x}_{x \in S})& \longmapsto \mbk{t \mapsto ((x,t) ,\mbk{V_x}_{x \in S})}}
gives a homotopy inverse of $\Omega F(S^{n+1} ,*) \to F(S^n,*)$. By the same argument we obtain
\ma{S: P(S^n,*) &\ral \Omega P(S^n \times I,S^n \times \mbk{0,1} \cup \mbk{*} \times I)\\
(S,\mbk{n_x}_{x \in S})& \longmapsto \mbk{t \mapsto ((x,t) ,\mbk{n_x}_{x \in S})}}
gives a homotopy inverse of $\Omega P(S^{n+1}) \to P(S^n,*)$. 
Now by definition the following diagram commutes
\[
\xymatrix{F(S^n,*) \ar[r]^{S \ \ } \ar[d]_j &\Omega F(S^{n+1},*) \ar[d]^j \\
P(S^n,*) \ar[r]^{S \ \ } & \Omega P(S^{n+1},*).}
\]
The multiplicativity of $j_*$ follows immediately since the multiplicative structure on $\mbk{F(S^n,*)}_{n=0,1,2,\ldots}$ and $P(S^n,*)_{n=0,1,2,\ldots}$ are induced from the map $(S^n,*) \times (S^m,*) \to (S^{n+m},*)$ coming from the wedge product.
\end{proof}

To prove the generalization of the spectral flow index theorem, we will see the relation between the joint spectral flow and the Chern character. The Chern character is a natural transform from the $K$-functor to the rational cohomology functor. Here there is a generalization of the Chern character for a general cohomology theory, which was introduced by Dold~\cite{Dold1962} and is called the Chern-Dold character. 

Now we identify $k^*(X)$ with $\tilde{k}^{*+1}(SX)$ to extend $j_*$ to a natural transform between unreduced cohomology theories $k^*(X) \to H^*(X)$. It is compatible with the original $j_*$ according to Proposition \ref{prp:nat_j}.

\begin{prp}\label{prp:ch}
The $n$-th Chern-Dold character $\ch_n : k^n(X)\otimes \quot \to H^n(X; \quot)$ coincides with $j_*$ rationally. 
\end{prp}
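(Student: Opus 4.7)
The plan is to reduce the claim to checking that $\ch_n$ and $j_\ast\otimes\quot$ agree on a single generator. Both are natural transformations of cohomology functors $\tilde{k}^n(-)\otimes\quot \to H^n(-;\quot)$; for $j_\ast$ this uses Proposition \ref{prp:nat_j}. By Brown representability, such natural transformations correspond to elements of $H^n(F(S^n,*);\quot)$, since $F(S^n,*)$ represents $\tilde{k}^n$ and $K(\quot,n)$ represents $H^n(-;\quot)$. The homotopy groups are $\pi_i F(S^n,*)=\tilde{k}^n(S^i)=k^{n-i}(\pt)$, which equals $\zahl$ exactly when $i\in\{n,n+2,n+4,\dots\}$ and vanishes otherwise, so by Serre's rational splitting the connective $K$-theory spectrum satisfies $k\otimes\quot\simeq\prod_{i\ge 0}\Sigma^{2i}H\quot$ and correspondingly $F(S^n,*)_\quot\simeq\prod_{i\ge 0}K(\quot,n+2i)$. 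Since $H^n(K(\quot,n+2i);\quot)=0$ for $i>0$, the Künneth formula gives $H^n(F(S^n,*);\quot)\cong\quot$, so the $\quot$-vector space of such natural transformations is one-dimensional and any two of them are proportional.

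Next I would evaluate both transformations on the generator $1\in\tilde{k}^n(S^n)\cong\zahl$, represented by the inclusion $\iota:S^n\hra F(S^n,*)$, $x\mapsto(\{x\},\comp)$. The composite $j\circ\iota$ is the inclusion $S^n\hra P(S^n,*)\simeq\Sym^\infty(S^n,*)$ sending $x$ to the single-point configuration of multiplicity one, which by the Dold-Thom theorem~\cite{DoldThom1958} represents the fundamental cohomology class of $S^n$, as already noted in the example following Definition \ref{def:jsf} for $n=1$. Hence $j_\ast(1)=1\in H^n(S^n;\zahl)$. On the other hand, the Chern-Dold character is by construction the natural transformation arising from the $(i=0)$-factor of the rational splitting $k\otimes\quot\simeq\prod\Sigma^{2i}H\quot$, and this factor sends the generator of $\pi_n F(S^n,*)$ to the generator of $\pi_n K(\quot,n)$; consequently $\ch_n(1)=1\in H^n(S^n;\quot)$. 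The two transformations therefore agree on $\tilde{k}^n(S^n)\otimes\quot$, and by the one-dimensionality they coincide everywhere. The unreduced statement follows from the identification $k^n(X)\cong\tilde{k}^{n+1}(SX)$ used just before the proposition, together with the compatibility of $j_\ast$ with suspension established in the proof of Proposition \ref{prp:nat_j}.

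The main obstacle is the bookkeeping of normalizations. One must verify that Dold's construction of $\ch_n$ through the rational Postnikov tower of $k$ (equivalently, through the equivalence $F(S^n,*)_\quot\simeq\prod_{i\ge 0}K(\quot,n+2i)$) really does send the canonical generator of $\tilde{k}^n(S^n)$ to the fundamental class of $S^n$, and not to some nontrivial rational multiple. This is essentially built into the definition of the Chern-Dold character, but requires a careful comparison of the chosen generator of $\pi_n$ of the representing space with the image of the fundamental class under the projection onto the $i=0$ factor; once this normalization is pinned down, the rest of the argument reduces to the already-verified Dold-Thom computation.
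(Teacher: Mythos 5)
Your argument is sound but takes a genuinely different route from the paper. The paper stays stable and multiplicative: it invokes Dold's theorem that a natural transformation of multiplicative cohomology theories into a theory whose coefficients form a graded $\quot$-vector space is uniquely determined by the induced graded ring homomorphism on coefficients, uses Proposition \ref{prp:nat_j} to see that $j_*$ is such a transformation, identifies its coefficient map as $\zahl[\beta]\to\zahl$, $1\mapsto 1$, $\beta\mapsto 0$, and observes that this is exactly the coefficient map characterizing $\ch_n$; no evaluation on $S^n$ and no analysis of the representing space is needed. You instead work unstably in the single degree $n$: a Yoneda-type argument (what you call Brown representability) identifies representable transformations $\tilde{k}^n(-)\otimes\quot\to H^n(-;\quot)$ with $H^n(F(S^n,*);\quot)$, you show this group is $\quot$ via the rational splitting of the connective $K$-spectrum (the same conclusion follows more cheaply from the paper's computation that $F(S^n,*)$ is $(n-1)$-connected with $\pi_n\cong\zahl$, by Hurewicz and universal coefficients), and you pin down the constant by evaluating on the generator of $\tilde{k}^n(S^n)$, using Dold--Thom for $j_*$. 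This buys something: you never need multiplicativity of $j_*$, only naturality and additivity in one degree. Two loose ends should be tightened. First, since the functors here are defined on finite complexes, the Yoneda step is legitimate because both transformations are in fact induced by maps of representing spaces ($j$ followed by $P(S^n,*)\simeq \Sym^\infty(S^n,*)\to K(\quot,n)$, and the projection onto the degree-zero factor of the rational splitting); say this explicitly. Second, the normalization of $\ch_n$ on the generator, which you flag as the main obstacle, is a real gap as written but closes without unwinding Dold's construction: both transformations commute with suspension and send $1\in k^0(\pt)$ to $1\in H^0(\pt;\quot)$, so both send the $n$-fold suspension of the unit --- which generates $\tilde{k}^n(S^n)$ and equals $\pm[\iota]$ --- to the same class, and the ambiguous sign cancels since it enters both sides; note that this fix is precisely the coefficient-level comparison on which the paper's own proof runs.
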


\begin{proof}
The following diagram 
\[
\xymatrix{k^n(X) \otimes \quot \ar[r]_{\ch \ \ \ } \ar[d]_{j_*} \ar@{}[rd]|\circlearrowleft & H^n(X;k^* (\pt) \otimes \quot) \ar[d]_{1 \otimes j_*} \\
H^n(X ; \quot) \ar[r] ^{\sim \ \ \ \ } _{\ch =\id \ \ \ \ \ \ } & H^n(X; H^* (\pt) \otimes \quot)}
\]
commutes by Proposition \ref{prp:nat_j} and naturality of the Chern-Dold character. In fact, Dold proved in \cite{Dold1962} that there is a one-to-one correspondence between natural transforms of multiplicative cohomology theories $h \to h'$ and graded ring homomorphisms $h(\pt) \to h'(\pt)$ if $h'(\pt)$ is a graded vector space over $\quot$. The Chern-Dold character is induced from the ring homomorphism $h^*(\pt) \to \quot \otimes _\zahl h^*(\pt)$. Its naturality follows from the uniqueness. 

Now $k^*(pt) \cong \zahl[\beta ]$ ($\beta $ is of degree $-2$), $H^*(pt) \cong \zahl$, and ring homomorphism $j_*$ from $\zahl[\beta]$ to $\zahl$ is given by $1 \mapsto 1$ and $\beta \mapsto 0$. Hence $(1 \otimes j_*) \circ \ch$ coincides with the $n$-th Chern-Dold character $ch _n$. This implies that $j_*=\ch _n$. 
\end{proof}

Let $X$ be a closed $Spin^c$ manifold, $\slashed{\mf{S}}_\comp(X)$ the associated $\Cliff_n$-module bundle of $Spin^c(X)$ by the left multiplication on $\Cliff _n$ as right $\Cliff_n$-module, and $\slashed{\mf{D}}_X$ the $\Cliff _n$-Dirac operator on $\slashed{\mf{S}}_\comp(X)$. Now $\slashed{\mf{S}}_\comp (X)$ is equipped with the canonical $\zahl /2$-grading and $\slashed{\mf{D}}_X$ is an odd operator. Then it gives an element of $K_n(X) \cong KK(C(X) \hat{\otimes} \Cliff _n , \comp)$
$$[\slashed{\mf{D}}_X]:=[L^2(X,\slashed{\mf{S}}_\comp(X)), m, \slashed{\mf{D}}_X(1+\slashed{\mf{D}}_X^2)^{-1/2}],$$
which is the fundamental class of $K$-theory. Here $m: C(X) \hat \otimes  \Cliff _n \to B(L^2(\slashed{\mf{S}}_\comp(X)))$ is given by the Clifford multiplication. 

\begin{lem}\label{lem:pair}
Let $\mbk{T(x)}_{x \in X}$ be a continuous family of commuting Fredholm $n$-tuples. Then
$$\ebk{ [\ind T ],[\slashed{\mf{D}}_X]}_n= \jsf \mbk{T(x)}.$$
Here $\ebk{\cdot , \cdot}_n$ in the left hand side is the canonical pairing between $K^n(X)$ and $K_n(X)$.
\end{lem}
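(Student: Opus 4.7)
The plan is to compute the $K$-theoretic pairing on the left-hand side via the Atiyah-Singer index theorem, and then identify the resulting cohomological integral with $\jsf\mbk{T(x)}$ via Proposition~\ref{prp:ch}.

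Let $u := [\mbk{T(x)}] \in \tilde{k}^n(X)$ denote the connective $K$-class of the family, and let $\bar u \in K^n(X)$ be its image under the canonical map $\tilde{k}^n(X) \to K^n(X)$; by Theorem~\ref{thm:hom-op}, $\bar u = [\ind T]$. Regarding $[\slashed{\mf{D}}_X] \in K_n(X) \cong KK(C(X) \hat{\otimes} \Cliff _n , \comp)$ as a Kasparov cycle, the pairing $\ebk{\bar u, [\slashed{\mf{D}}_X]}_n$ is realized as the Kasparov product in $KK(\comp,\comp)=\zahl$, which by the Atiyah-Singer index theorem evaluates to
\ma{\ebk{\bar u, [\slashed{\mf{D}}_X]}_n = \int_X \ch(\bar u) \wedge \Td(X).}

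The crucial point is that because $u$ is a connective $K$-class of degree $n$ on an $n$-dimensional manifold, $\ch(\bar u)$ must be concentrated in top cohomological degree. Indeed, the Chern-Dold character of $u$ takes values in $H^n(X; k^*(\pt) \otimes \quot) = \bigoplus_{i \geq 0} H^{n+2i}(X; \quot) \cdot \beta^i$, using that $k^{-2i}(\pt) \otimes \quot = \quot \cdot \beta^i$ for $i \geq 0$ and vanishes in positive degrees. Since $\dim X = n$, the factors $H^{n+2i}(X; \quot)$ with $i \geq 1$ all vanish, so $\ch(u) = \ch_n(u) \in H^n(X; \quot)$. Naturality of the Chern-Dold character under $\tilde{k}^n \to K^n$ then forces $\ch(\bar u) = \ch_n(u)$ as an element of the top-degree summand of $H^*(X; \quot)$: because the connective Chern-Dold character only involves nonnegative powers of $\beta$, none of the ``lower-degree'' components $\beta^{-j}$ appearing in the periodic Chern character can be hit.

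Because $\ch(\bar u)$ lies in degree $n = \dim X$, only the constant term of $\Td(X) = 1 + (\text{positive-degree terms})$ contributes to the integral, giving
\ma{\int_X \ch(\bar u) \wedge \Td(X) = \int_X \ch_n(u) = \ebk{\ch_n(u), [X]}.}
By Proposition~\ref{prp:ch}, $\ch_n(u)$ coincides with $j_*(u)$ after tensoring with $\quot$, so this pairing equals $\ebk{j_*(u), [X]} = \jsf \mbk{T(x)}$. Since the left-hand side is a priori an integer, rational agreement implies integral agreement. The main technical hurdle is the degree-counting argument in the second paragraph, verifying that no components of $\ch(\bar u)$ in cohomological degrees strictly below $n$ can appear; once this is granted, the rest is a direct interplay between Atiyah-Singer and Proposition~\ref{prp:ch}.
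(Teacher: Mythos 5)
Your proposal is correct and its core is the same as the paper's: express the pairing by the cohomological formula $\langle \ch(\bar u)\,\Td(X),[X]\rangle$, observe that a class coming from $\tilde k^n(X)$ has periodic Chern character concentrated in degrees $n, n+2,\ldots$ (hence only degree $n$ survives on an $n$-manifold and only $\Td_0=1$ contributes), and then invoke Proposition~\ref{prp:ch} to identify $\ch_n$ with $j_*$; your degree-counting paragraph is exactly the justification the paper uses, just spelled out more explicitly. The one place where you are thinner than the paper is the step you label ``by the Atiyah--Singer index theorem'': the pairing here is a Kasparov product over $C(X)\hat\otimes\Cliff_n$, and the paper does not apply a cohomological index formula to it directly. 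For $n$ even it first untwists the Clifford coefficients, using $\Cliff_n\cong\Delta_n\hat\otimes\Delta_n^*$ to replace $[\ind T]$ and $[\slashed{\mf{D}}_X]$ by ungraded classes $[\![\ind T]\!]\in KK(\comp,C(X))$ and $[\slashed{D}_X]\in KK(C(X),\comp)$, and only then uses the compatibility of the Chern character with the pairing and $\ch([\slashed{D}_X])=\Td(X)\cap[X]$. For $n$ odd it avoids quoting any odd index theorem at all: it crosses with a spectral-flow-one family over $S^1$ and uses multiplicativity of both the pairing and the joint spectral flow to reduce to the even case. Your uniform appeal to Atiyah--Singer is defensible if you grant the Baum--Douglas-type pairing formula in both parities (the paper itself cites this later for Toeplitz operators), but that is precisely the nontrivial input, so you should either supply the Clifford-module reduction and the odd-case formula, or adopt the paper's product trick for odd $n$.
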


\begin{proof}
First we prove it in the case that $n$ is even. In that case we have a unique irreducible representation $\Delta_n$ of $\Cliff _n$ and the Dirac operator $\slashed{D}_X$ on $\slashed{S}_\comp (X):=Spin ^c(X) \times _{\Cliff _n} \Delta_n$. Now $\Delta _n$ is equipped with a canonical $\zahl/2$-grading and $\slashed{D}$ is an odd operator. It defines a $KK$-cycle
$$[\slashed{D}_X]:=[L^2(X,\slashed{S}_\comp (X)), m, \slashed{D}_X (1+\slashed{D}_X^2)^{-1/2}] \in KK(C(X) , \comp).$$
We denote by $[\![ \ind T ]\!]$ a $KK$-cycle $[\mc{H} \otimes \Delta_n , 1, T] \in KK(\comp ,C(X))$. Since $\Cliff _n \cong \Delta_n \otimes \Delta_n ^*$ as $\Cliff_n$-$\Cliff _n$-bimodules, the equalities $[\slashed{\mf{D}}_X]=[\slashed{D}_X] \otimes \Delta_n$ and $[\ind T]=[\![ \ind T ]\!] \hat \otimes \Delta _n$ (in particular $\ch [\ind T]=\ch [\![ \ind T ]\!]$) hold. Here $\Delta ^*$ is a Hilbert $\Cliff_n$-module by the inner product $\ebk{x,y}:=x^*y$. 

The pairing $\ebk{\cdot , \cdot}_n$ is given by the Kasparov product $KK(\comp , C(X) \otimes \Cliff _n) \otimes KK(C(X) \otimes \Cliff _n , \comp) \to \zahl$. Therefore
\ma{\ebk{[\ind T], [\slashed{\mf{D}}]}_n&=[\ind T] \otimes _{C(X) \otimes \Cliff _n}[\slashed{\mf{D}}_X] \\&=([\![ \ind T ]\!] \otimes _{C(X)} [\slashed{D}_X]) \otimes (\Delta ^* \otimes _{\Cliff _n} \Delta ) = [\![ \ind T ]\!] \otimes _{C(X)} [\slashed{D}_X].}
Now we use the Chern character for $K$-homology that is compatible with pairing. The Chern character of the $Spin ^c$ Dirac operator $\slashed{D}_X$ is given by its Todd class that is given by its $Spin^c$ structure. Hence
\ma{\ebk{[\mbk{T(x)}],[\slashed{D}_X]}&=\ebk{\ch ([\![ \ind T]\!]), \ch ([\slashed{D}_X])}\\
&=\ebk{\ch ([\ind T ]),\Td (X) \cap [X]}\\
&=\ebk{\ch _n ([ \ind T ] ) ,[X]}=\jsf \mbk{T(x)}.}
Here the third equality holds because $\ch ([ \ind T ])$ is in $\bigoplus _{k \geq 0} H^{n+2k}(X;\quot ) \cong H^n(X; \quot )$ and the zeroth Todd class $\Td _0 (X)$ is equal to 1 and the last equality holds by Proposition \ref{prp:ch}.

Finally we prove it in the case that $n$ is odd. We can reduce the problem to the case $n=1$ because for a family of self-adjoint operators $S(t)$ parametrized by $S^1$ whose spectral flow is $1$ (hence $[\ind S]=1 \in K^1(S^1) \cong \zahl$), we have
\ma{\ebk{[\ind T],[\slashed{\mf{D}}]}_n&=\ebk{[\ind T] \cup [\ind S] , [\slashed{\mf{D}}_X] \otimes [\slashed{\mf{D}}_{S^1}]}_{n+1}\\
&=\jsf (\mbk{T(x)} \times \mbk{S(t)})=\jsf \mbk{T(x)}.
}
Here we use the fact that the joint spectral flow of the product family $\mbk{T(x)} \times \mbk{S(t)}$ coincides with the product $\jsf(\mbk{T(x)}) \cdot \jsf(\mbk{S(t)})$.
\end{proof}

Now we give an index theorem that is a generalization of the spectral flow index theorem in \cite{AtiyahPatodiSinger1976}.

Let $B$ be a closed $n$-dimensional $Spin ^c$ manifold, $Z \to M \to B$ a smooth fiber bundle over $B$, $E$ a smooth complex vector bundle over $M$. We fix a decomposition $TM=T_VM \oplus T_HM$ of the tangent bundle where $T_VM:=\mbk{v \in TM; \pi _*v=0}$ is the vertical tangent bundle. For a hermitian vector bundle $E$, we denote by $\pi^*\slashed{\mf{S}}^E_\comp (B)$ the $\Cliff _n$-module bundle $\pi ^* \slashed{\mf{S}}_\comp (B) \otimes E$ on $M$. Now we define the {\it pull-back} of the $\Cliff_n$-Dirac operator $\slashed{\mf{D}}_B$ on $B$ twisted by $E$ as
\ma{\pi^* \slashed{\mf{D}}_B : &\Gamma (M,\pi ^* \slashed{\mf{S}}_\comp ^E(B)) \xra{\nabla} \Gamma (M , \pi ^*\slashed{\mf{S}} _\comp ^E(B)  \otimes T^*M)  \\
 & \hspace{3em} \xra{p_{T_H^*M}}\Gamma (M,\pi^*\slashed{\mf{S}} _\comp^E (B)\otimes T_H^*M) \xra{h}\Gamma (M,\pi ^*\slashed{\mf{S}}_\comp ^E(B)).}
Here, $h$ is the left Clifford action of $\Cliff (TB) \cong \Cliff (T_HM)$ on $\pi^*\slashed{\mf{S}}_\comp^E(B)$. We write down it by using an orthogonal basis $\mbk{e_1,\ldots,e_n }$ of $T_{\pi(x)}B \cong T_{\pi(x)}^*B$ as 
$$\pi ^*\slashed{\mf{D}}_B= \sum h(\pi ^* e_i) \nabla ^{\pi ^*\slashed{\mf{S}}_\comp ^E(B)}_{\pi^* e_i}.$$
Now it satisfies 
\ma{\pi ^* \slashed{\mf{D}}_B (\pi ^* \varphi )=\pi ^* (\slashed{\mf{D}}_B\varphi).}

Let $\mbk{D_1,\ldots,D_n}$ be an $n$-tuple of fiberwise first order pseudodifferential operators on $E$, that is, a smooth family $\mbk{D(x)}$ of pseudodifferential operators on $\Gamma (M_x , E|_{M_x})$. Moreover we assume these two conditions.
\theoremstyle{definition}
\newtheorem{cond}[equation]{Condition}
\begin{cond}\label{cond:comm}
\item[1.] The operators $D_i$ and $D_j$ commute for any $i,j$.
\item[2.] The square sum $\sum _{i=1}^n D_i ^2$ is fiberwise elliptic, that is, its principal symbol is invertible on $S(T_VM)$.
\end{cond}
Then, by taking a trivialization of the Hilbert bundle of fiberwise $L^2$-sections $\mc{L}^2_f(M,E \hat \otimes \Cliff _n):=\mbk{L^2(Z_x,E_x \hat \otimes \Cliff _n)}_{x \in B}$, it forms a continuous family of unbounded commuting Fredholm $n$-tuples $\mbk{D(x)}=\mbk{(D_1(x),\ldots,D_n(x))}$ parametrized by $B$. Indeed, according to Kuiper's theorem, any Hilbert space bundles are trivial and $[D(x)]$ is independent of the choice of a trivialization. The second assertion holds because a trivialization of Hilbert bundle $\mc{V}$ gives a unitary $U \in \Hom _{C(X)} (C(X) \otimes \mc{H}, \Gamma (X, \mc{V}))$ and hence two trivializations $U$, $U'$ give a norm continuous unitary-valued function $U^{-1}U'$, which is homotopic to the identity. Combining with a connection on $\pi^*\slashed{\mf{S}}_\comp (B)$, which is fiberwise flat, the Dirac operator $D(x)=c_1D_1(x) + \cdots +c_nD_n(x)$ associated with $\mbk{D(x)}$ (here we denote by $c$ the $\Cliff_n$-action on $\slashed{\mf{S}}_\comp (B)$ and $c_i:=c(e_i)$ for an orthonormal basis $\mbk{e_i}$) also defines a first order pseudodifferential operator on $\pi ^*\slashed{\mf{S}}^E_\comp (B)$.

Now we describe our main theorem.

\begin{thm}\label{thm:jsf}
Let $B$, $M$, $E$, and $\mbk{D(x)}$ be as above. Then the following formula holds.
\ma{\ind _0 (\pi ^*\slashed{\mf{D}}_B +D(x))=\jsf \mbk{D(x)}.}
Here, for an odd self-adjoint operator $D$ on $\mc{H} =\mc{H}^0 \oplus \mc{H}^1$, we denote by $\ind _0 D$ the Fredholm index of $D : \mc{H}^0 \to \mc{H}^1$.
\end{thm}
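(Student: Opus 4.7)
The plan is to show $\ind_0(\pi^*\slashed{\mf{D}}_B+D(x))=\ebk{[\ind D(x)],[\slashed{\mf{D}}_B]}_n$ and then invoke Lemma~\ref{lem:pair} to rewrite the pairing as $\jsf\mbk{D(x)}$. This is the $KK$-theoretic incarnation of the original Atiyah--Patodi--Singer argument: the fiberwise family $\mbk{D(x)}$ produces a family-index class on $B$ with $\Cliff_n$-coefficients, the base $B$ carries its fundamental class $[\slashed{\mf{D}}_B]$, and the total-space operator $\pi^*\slashed{\mf{D}}_B+D(x)$ should realize their Kasparov product.

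First I would verify that $\pi^*\slashed{\mf{D}}_B+D(x)$ is a first order elliptic operator on $M$ relative to the splitting $TM=T_HM\oplus T_VM$. Horizontal ellipticity is built into the construction of $\pi^*\slashed{\mf{D}}_B$ via $\slashed{\mf{D}}_B$, while fiberwise ellipticity is exactly Condition~\ref{cond:comm}(2); since the principal symbols of the two summands pair with Clifford-orthogonal directions of $T^*M$, the full principal symbol is elliptic on $T^*M\setminus 0$, so the operator is Fredholm on $L^2(M,\pi^*\slashed{\mf{S}}^E_\comp(B))$.

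Next I would construct the two $KK$-cycles and identify their product. After trivializing the Hilbert bundle $\mc{L}^2_f(M,E\hat\otimes\Cliff_n)$ via Kuiper's theorem, the bounded transform of $\mbk{D(x)}$ acts on a Hilbert $C(B)\hat\otimes\Cliff_n$-module $\mc{V}$ and defines $[\ind D(x)]\in KK(\comp,C(B)\hat\otimes\Cliff_n)$ compatibly with Theorem~\ref{thm:hom-op}. Under the natural fiber-integration isomorphism
\ma{\mc{V}\otimes_{C(B)\hat\otimes\Cliff_n}L^2(B,\slashed{\mf{S}}_\comp(B))\cong L^2(M,\pi^*\slashed{\mf{S}}^E_\comp(B)),}
the unbounded operator $\pi^*\slashed{\mf{D}}_B+D(x)$ decomposes into a vertical summand $D(x)$ and a horizontal summand $\pi^*\slashed{\mf{D}}_B$ obtained from a lift of a connection on $\mc{V}$. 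Kucerovsky's unbounded Kasparov product theorem then applies: the domain and connection conditions are built in, and the positivity of the graded commutator modulo bounded operators reduces to a symbol computation, using that $[\pi^*\slashed{\mf{D}}_B,D(x)]$ is of order zero because $D(x)$ is purely vertical. This yields
\ma{[\pi^*\slashed{\mf{D}}_B+D(x)]=[\ind D(x)]\otimes_{C(B)\hat\otimes\Cliff_n}[\slashed{\mf{D}}_B]\in KK(\comp,\comp)\cong\zahl,}
and Lemma~\ref{lem:pair} identifies the right-hand side with $\jsf\mbk{D(x)}$.

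The hard part is the Kucerovsky verification, especially the positivity of the cross term and the independence of the answer from the choices of trivialization and connection. The mutual commutativity of the $D_i(x)$ in Condition~\ref{cond:comm}(1) enters crucially here, aligning the Clifford signs so that the combined operator is essentially self-adjoint and squares to $(\pi^*\slashed{\mf{D}}_B)^2+\sum D_i(x)^2$ modulo lower order terms. Alternatively one may bypass a hand-made Kasparov product verification by invoking the Mishchenko--Fomenko families index theorem with $C(B)\hat\otimes\Cliff_n$-coefficients, but the direct approach fits most naturally in the $KK$-picture developed in Section~\ref{section:2}.
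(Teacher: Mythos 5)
Your overall architecture is the same as the paper's: reduce to the pairing $\ebk{[\ind D],[\slashed{\mf{D}}_B]}_n$ via Lemma \ref{lem:pair}, identify $L^2(M,\pi^*\slashed{\mf{S}}^E_\comp(B))$ with the internal tensor product of the two cycles, and verify Kucerovsky's criterion (Theorem \ref{thm:Kas}) for $\pi^*\slashed{\mf{D}}_B+D(x)$. However, there is a genuine gap in how you dispose of the positivity condition. You assert that $[\pi^*\slashed{\mf{D}}_B,D(x)]$ ``is of order zero because $D(x)$ is purely vertical,'' and that positivity then reduces to a symbol computation. This is false in general: since the fiberwise family $D(x)$ varies with the base point, commuting it with a horizontal derivative differentiates its full symbol in the base directions, and the result is again a fiberwise pseudodifferential operator of order one along the fibers, hence unbounded (it is bounded only in essentially product situations). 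Moreover, Kucerovsky's condition 3 is an operator inequality, $\ebk{D_f\xi,(D_b+D_f)\xi}+\ebk{(D_b+D_f)\xi,D_f\xi}\geq \kappa\ebk{\xi,\xi}$, not a statement about principal symbols, so it cannot be settled at the symbol level even if the commutator had order zero.

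The missing ingredient is precisely the paper's Lemma \ref{lem:ineq}: one first shows that $[D_b,D_f]$ is fiberwise pseudodifferential of order one, so that $[D_b,D_f](1+D_f^2)^{-1/2}$ is bounded, and then a Cauchy--Schwarz argument in the style of Kaad--Lesch gives, for every $\alpha>0$, a constant $C>0$ with $\ebk{[D_b,D_f]\xi,\xi}\geq -\alpha\ssbk{D_f\xi}^2-C\ssbk{\xi}^2$. Combined with the identity $\ebk{D_f\xi,(D_b+D_f)\xi}+\ebk{(D_b+D_f)\xi,D_f\xi}=\ebk{[D_b,D_f]\xi,\xi}+\ssbk{D_f\xi}^2$, this relative-boundedness with arbitrarily small relative bound yields condition 3. (Condition 1 is the Leibniz-rule computation you allude to, and condition 2 follows from $\dom(D_b+D_f)\subset\dom D_f$; those parts of your sketch are fine.) So your route is the right one, but the commutator estimate --- the actual analytic content of the theorem --- still has to be supplied rather than dismissed as an order-zero statement.
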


To prove this theorem we prepare a lemma about an operator inequality. In this section we denote $D(x)$ and $\pi ^*\slashed{\mf{D}}_B$ simply by $D_f$ and $D_b$. 

\begin{lem}\label{lem:ineq}
For any $\alpha  \geq 0$ there is a constant $C>0$ such that for any $\xi \in \Gamma (M,\pi^*\slashed{\mf{S}}_\comp^E(B))$
\maa{\ebk{[D_b,D_f]\xi,\xi} \geq -\alpha \ssbk{D_f \xi}^2-C\ssbk{\xi}^2. \label{eq:ineq}}
\end{lem}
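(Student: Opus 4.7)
The plan is to show that $[D_b,D_f]$, though formally a commutator of two first-order operators on $M$, is in fact a \emph{fiberwise} pseudodifferential operator of order one with no horizontal-derivative part, and then to control it using fiberwise ellipticity of $D_f$ and the AM-GM inequality.

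First, I would compute $[D_b,D_f]$ in a local fiber-bundle chart $(x^i,z^\alpha)$ on $M$ with a trivialization of $\pi^*\slashed{\mf{S}}_\comp(B)$ pulled back from $B$; in this frame $h^i:=h(\pi^*e_i)$ is independent of the fiber coordinate $z^\alpha$. Writing $D_b=\sum_i h^i(\partial_{x^i}+A_i)$ with $A_i$ the connection one-form of $\pi^*\slashed{\mf{S}}_\comp^E(B)$, the key point is that $h^i$ and $c^j$ act on $\pi^*\slashed{\mf{S}}_\comp(B)$ by commuting left- and right-Clifford multiplication, while the fiberwise differentiation in $D_f$ takes place on the other tensor factor $E$. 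Hence $[h^i,D_f]=0$, and together with $\partial_{z^\alpha}h^i=0$ the mixed second-order $\partial_{x^i}\partial_{z^\alpha}$-part of $D_bD_f-D_fD_b$ cancels identically. What remains, modulo a fiberwise zeroth-order remainder $R$ arising from $[A_i,D_f]$ (which is of order zero since $A_i$ is a bounded multiplication operator), is the purely vertical first-order operator
\ma{[D_b,D_f] = \sum_i h^i\cdot(\partial_{x^i}D_f(x)) + R,}
where $\partial_{x^i}D_f(x)$ denotes the derivative of the smooth family $x\mapsto D_f(x)$ in the $e_i$-direction and is itself a fiberwise pseudodifferential operator of order one.

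Next I would invoke fiberwise ellipticity of $D_f(x)$, which holds by Condition \ref{cond:comm} since $\sigma(D_f)^2=\sum_j\sigma(D_j)^2$ is positive on $T^*_VM\setminus\mbk{0}$. Fiberwise Garding's inequality then gives
\ma{\ssbk{(\partial_{x^i}D_f(x))\xi(x)}_{L^2(Z_x)} \leq C\bk{\ssbk{D_f(x)\xi(x)}_{L^2(Z_x)} + \ssbk{\xi(x)}_{L^2(Z_x)}}}
with $C$ uniform in $x\in B$, by smoothness of the family and compactness of $B$. Squaring and integrating over $B$, and using that $R$ is $L^2$-bounded, yields
\ma{\ssbk{[D_b,D_f]\xi}_{L^2(M)} \leq C'\bk{\ssbk{D_f\xi}+\ssbk{\xi}}.}
Cauchy-Schwarz followed by the elementary inequality $ab\leq \alpha a^2 + b^2/(4\alpha)$ then absorbs the mixed term and produces (\ref{eq:ineq}).

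The principal obstacle is the first step: without the horizontal-derivative cancellation, $[D_b,D_f]$ would contain terms of the form $(\text{bounded})\cdot\partial_{x^i}$ that no fiberwise quantity involving $D_f$ could control. This cancellation rests essentially on $h^i$ being a pullback from $B$ (so $\partial_{z^\alpha}h^i=0$) and on $h$ and $c$ acting on $\pi^*\slashed{\mf{S}}_\comp(B)$ by commuting left and right Clifford multiplication; once this is verified, the rest is a standard elliptic estimate on each fiber.
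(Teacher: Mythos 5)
Your proposal is correct and follows essentially the same route as the paper: first show that $[D_b,D_f]$ is a \emph{fiberwise} first-order pseudodifferential operator (the paper does this by a local-coordinate computation along each fiber, using a tubular neighborhood so that the $\partial_{x_b^i}$ are horizontal along that fiber, which is the careful version of your cancellation of horizontal-derivative terms), and then dominate it by $D_f$ using fiberwise ellipticity and absorb the cross term with a small parameter. Your fiberwise G{\aa}rding estimate plus Young's inequality is just a repackaging of the paper's observation that $[D_b,D_f](1+D_f^2)^{-1/2}$ is bounded followed by the Kaad--Lesch $\lambda$-trick, so the two arguments coincide in substance.
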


\begin{proof}
First we observe that $[D_b,D_f]$ is also a fiberwise first-order pseudodifferential operator. Let $(V, x_b^1,\ldots,x_b^n)$ be a local coordinate of $x \in B$ and $(U, x_b^1,\ldots, x_b^n,x_f^1,\ldots ,x_f^m)$ a local coordinate in $\pi^{-1}(V)$ such that tangent vectors $\partial _{x_b^i}(p)$ are in $(T_HM)_p$ for any $p \in {\pi^{-1}(x)}$. We get such a coordinate by identifying a neighborhood of zero section of $T_HM|_{\pi^{-1}(x)} \cong N\pi^{-1}(x)$ with a tubular neighborhood of $\pi^{-1}(x)$. We assume that $\pi^*\slashed{\mf{S}}_\comp^E(B)$ is trivial on $U$ and fix a trivialization. Then, for any fiberwise pseudodifferential operator $P$ supported in $U$, the operator $[\partial _{x_b^i},P]$ is also fiberwise pseudodifferential. Indeed, when we write down a fiberwise pseudodifferential operator $P$ on a bounded open subset of $\real^{n+m}=\real ^n_{x_b}\times \real ^m_{x_f}$ as
$$Pu(x_b,x_f)=\int _{(y_f,\xi_f) \in \real ^m \times \real ^m} e^{i\ebk{x_f-y_f,\xi_f}}a(x_b,x_f,y_f,\xi _f)u(x_b,y_f)dy_fd\xi_f,$$
we have
\ma{\lbk{\partial_{ x_b^{i}},P}u(x_b,x_f)=&\int \partial _{x_b^{i}}(e^{i\ebk{x_f-y_f,\xi_f}}a(x_b,x_f,y_f,\xi_f)u(x_b,y_f))dy_fd\xi_f \\
&-\int e^{i\ebk{x_f-y_f,\xi_f}}a(x_b,x_f,y_f,\xi_f)\partial _{x_b^i}u(x_b,y_f)dy_fd\xi_f\\
=&\int e^{i\ebk{x_f-y_f,\xi_f}}(\partial _{x_b^i}(a(x_b,x_f,y_f,\xi_f))u(x_b,y_f)dy_fd\xi_f .}
Let $D'_b:=\sum g^{ij}h(\partial _{x_b^i})\nabla _{\partial _{x^j_b}}$. Since the Riemannian metric $g^{ij}$ on $T_HM$ only depends on the local coordinate of $B$ (i.e. is a function on $B$), an operator $[D'_b,P]=[\sum g^{ij}h(\partial _{x_b^i})(\partial _{x_b^j} +\omega(\partial_{x_b^j})),P]$ is also fiberwise pseudodifferential. 

For any $\xi \in \Gamma (U,\pi^*\slashed{\mf{S}}_\comp^E(B)|_U)$, the section $[D_b,P]\xi|_{\pi^{-1}(x)}$ depends only on the restriction of $\xi$ and its differentials in normal direction on $\pi^{-1}(x)$. Since the Dirac operator $D_b$ coincides with $D'_b$ on $U_0:=U \cap \pi^{-1}(x)$ and $[P,D'_b]$ is fiberwise pseudodifferential, $[D_b,P]\xi|_{\pi^{-1}(x)}=[D'_b,P]\xi|_{\pi^{-1}(x)}$ does not depend on differentials of $\xi$. Now, the above argument is independent of the choice of $x \in B$. As a consequence, $[D_b,P]$ is also fiberwise pseudodifferential. By using a partition of unity, we can see that $[D_b,D_f]$ is also a fiberwise pseudodifferential operator. 

As a conseqence, we obtain that $[D_b,D_f](1+D_f^2)^{-1/2}$ is a zeroth order pseudodifferential operator. In particular, it is bounded. Now, for any $\lambda >0$, we obtain an inequality
\ma{
\ebk{[D_b,D_f]\xi,\xi}&=\ebk{\lambda[D_b,D_f]\xi,\lambda^{-1}\xi} \\
&\ebk{\lambda D_b D_f(1+D_f^2)^{-1/2}(1+D_f^2)^{1/2}\xi,\lambda^{-1}\xi} \\
&+\ebk{\lambda D_f D_b\xi,\lambda^{-1}(1+D_f^2)^{-1/2}(1+D_f^2)^{1/2}\xi} \\
&\geq -\frac{1}{2} \lambda^{2}\ebk{[D_b,D_f]\xi,[D_b,D_f]\xi}-\frac{1}{2}\lambda^{-2}\ebk{\xi,\xi}\\
&\geq -\frac{1}{2}\lambda^{2}\ssbk{[D_b,D_f](1+D_f^2)^{-1/2}}^{2}\ebk{(1+D_f^2)\xi,\xi}- \frac{1}{2}\lambda^{-2}\ebk{\xi,\xi}\\
&=-\frac{1}{2}\lambda^{2}\ssbk{[D_b,D_f](1+D_f^{2})^{-1/2}}^{2}\ebk{D_f\xi,D_f\xi}\\
&- \frac{1}{2}(\lambda ^2\ssbk{[D_b,D_f](1+D_f^{2})^{-1/2}}^{2}+\lambda^{-2})\ebk{\xi,\xi}}
as is introduced in Lemma 7.5 of Kaad-Lesch~\cite{KaadLesch2012}. Now by choosing $\lambda := \sqrt{2 \alpha}\ssbk{[D_b,D_f](1+D_f^{2})^{-1/2}}^{-1}$ and $C:=\alpha + \lambda ^{-2} /2$, we show this $C$ satisfies the above condition. 
\end{proof}

Now we use the Connes-Skandalis type sufficient condition to realize the Kasparov product unbounded Kasparov bimodules introduced by Kucerovsky~\cite{Kucerovsky1997}.

\begin{thm}[Kucerovsky~\cite{Kucerovsky1997}]\label{thm:Kas}
Suppose that $(E_1,\varphi _1,D_1)$, $(E_2,\varphi _2,D_2)$, and $(E_1 \hat \otimes E_2,\varphi _1 \hat \otimes 1 , D)$ are unbounded Kasparov bimodules for $(A,B)$, $(B,C)$, and $(A,C)$ such that the following conditions hold.
\begin{enumerate}
\item For all $x$ in some dense subset of $\varphi _1(A)E_1$, the operator
$$\lbk{\pmx{D & 0 \\0 & D_2},\pmx{0 & T_x \\ T_x^* & 0}}$$
is bounded on $\dom (D \oplus D_2)$.
\item The resolvent of $D$ is compatible with $D_1 \hat{\otimes} 1$.
\item For all $x$ in the domain, $\ebk{D_1x,Dx} + \ebk{Dx,D_1x} \geq \kappa \ebk{x,x}$.
\end{enumerate}
Here $x \in E_1$ is homogeneous and $T_x : E_2 \to E$ maps $e \mapsto x \hat \otimes e$. Then $[E_1 \hat \otimes E_1,\varphi _1 \hat \otimes 1 , D] \in KK(A,C)$ represents the Kasparov product of $[E_1,\varphi _1,D_1] \in KK(A,C)$ and $[E_2,\varphi _2,D_2] \in KK(B,C)$.
\end{thm}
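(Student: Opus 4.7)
The plan is to reduce to the bounded Connes--Skandalis sufficient criterion for the Kasparov product. Set $F := D(1+D^2)^{-1/2}$ and $F_i := D_i(1+D_i^2)^{-1/2}$; we want to show that the bounded triple $(E_1 \hat\otimes E_2,\varphi_1 \hat\otimes 1, F)$ represents the Kasparov product of the bounded transforms of $(E_1,\varphi_1,D_1)$ and $(E_2,\varphi_2,D_2)$. Two conditions must be verified: the \emph{connection condition}, that the graded commutator of $F \oplus F_2$ with $\pmx{0 & T_x \\ T_x^* & 0}$ is a compact operator on $(E_1 \hat\otimes E_2) \oplus E_2$ for $x$ in a dense subset of $\varphi_1(A)E_1$, and the \emph{positivity condition}, that $\varphi_1(a)^*[F_1 \hat\otimes 1, F]\varphi_1(a) \geq 0$ modulo $\mbb{K}(E_1 \hat\otimes E_2)$ for every $a \in A$. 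The principal technical tool for passing from the three unbounded hypotheses to these two bounded ones is the integral representation
$$F = \frac{2}{\pi}\int_0^\infty D\,(1+\mu^2+D^2)^{-1}\,d\mu,$$
and the analogous formulas for $F_1$ and $F_2$.

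For the connection condition, I would differentiate $F$ under the integral sign and write the graded commutator with $\pmx{0 & T_x \\ T_x^* & 0}$ as a norm-convergent integral whose integrand is a sandwich of the form
$$(1+\mu^2+(D\oplus D_2)^2)^{-1}\cdot \bigl[\, D\oplus D_2 ,\, \pmx{0 & T_x \\ T_x^* & 0}\, \bigr] \cdot (1+\mu^2+(D\oplus D_2)^2)^{-1}.$$
Hypothesis (1) provides boundedness of the inner factor uniformly in $\mu$, while the axioms of an unbounded Kasparov bimodule furnish compactness of $\varphi(a)(1+D^2)^{-1}$. Multiplying by $\varphi(a)$ to capture the $A$-action and invoking a dominated-convergence argument in the compact-operator norm yields compactness of the commutator for $x$ in the given dense subset, giving the connection property.

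The positivity condition is the main obstacle. Applying $[F_1 \hat\otimes 1,\,\cdot\,]$ to the integral representation of $F$ and commuting $D_1 \hat\otimes 1$ past $D$ via the resolvent identity, the principal term becomes essentially
$$\frac{2}{\pi}\int_0^\infty (1+\mu^2+D^2)^{-1}\bigl(\{D_1 \hat\otimes 1,\,D\}\bigr)(1+\mu^2+D^2)^{-1}\,d\mu,$$
plus remainders built from $[D_1 \hat\otimes 1,\,(1+\mu^2+D^2)^{-1}]$. Hypothesis (2) on resolvent compatibility is precisely what controls these remainders, making them bounded and, after multiplication by $\varphi_1(a)$, compact. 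Hypothesis (3) asserts $\{D_1 \hat\otimes 1,\,D\}\geq \kappa$ on the common domain, so the principal integral is nonnegative up to a positive scalar multiple of $\int_0^\infty (1+\mu^2+D^2)^{-2}\,d\mu$, which is a strictly negative power of $(1+D^2)$ and hence compact once multiplied by $\varphi_1(a)$. Sandwiching the resulting identity by $\varphi_1(a)^*$ and $\varphi_1(a)$ yields positivity modulo $\mbb{K}(E_1 \hat\otimes E_2)$.

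The delicate point throughout will be justifying the formal integral manipulations on a common core for $D$, $D_1 \hat\otimes 1$, and their various products, and ensuring that the remainder estimates are sharp enough that neither positivity nor compactness is lost in the limit $\mu \to \infty$. This bookkeeping is exactly the technical content of Kucerovsky's argument; once it is in place, the Connes--Skandalis theorem on bounded Kasparov products identifies $[E_1 \hat\otimes E_2, \varphi_1 \hat\otimes 1, F]$ with the Kasparov product of $[E_1,\varphi_1,F_1]$ and $[E_2,\varphi_2,F_2]$, completing the proof.
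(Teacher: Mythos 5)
You should first note a mismatch of targets: the paper does not prove this statement at all --- it is imported verbatim from Kucerovsky \cite{Kucerovsky1997} and used as a black box in the proof of Theorem \ref{thm:jsf}, so there is no in-paper argument to compare against. Judged against Kucerovsky's original argument, your strategy is the right one and is essentially his: pass to bounded transforms, use the integral formula $D(1+D^2)^{-1/2}=\tfrac{2}{\pi}\int_0^\infty D(1+\mu^2+D^2)^{-1}\,d\mu$, and verify the Connes--Skandalis connection and positivity conditions, with hypothesis (1) feeding the connection condition, hypothesis (3) the positivity, and hypothesis (2) controlling the resolvent remainders.

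As a proof, however, what you have written is a plan rather than an argument, and the deferred ``bookkeeping'' is precisely the content of the theorem. Two places where the sketch as stated does not yet close: (i) in the connection condition, compactness of each integrand does not follow from ``boundedness of the inner factor uniformly in $\mu$'' plus compactness of $(\varphi_1(a)\hat\otimes 1)(1+D^2)^{-1}$ and dominated convergence --- norm convergence of the integral only yields boundedness, and the $A$-dependence sits inside $T_x$ (for $x\in\varphi_1(A)E_1$), so one must expand the commutator so that a resolvent of $D$ actually lands adjacent to the $\varphi_1(a)\hat\otimes 1$ factor; terms of the shape $T_x(1+\mu^2+D_2^2)^{-1}(\cdots)$ are not compact for free and need a separate treatment (this is the role of Kucerovsky's connection lemma). (ii) In the positivity step, the anticommutator $\{D_1\hat\otimes 1,D\}$ is only defined on a common core and is merely bounded below (the constant $\kappa$ may be negative), so one must justify the resolvent-identity manipulations on that core using condition (2) and then observe that the resulting error, a constant times $\varphi_1(a)(1+D^2)^{-3/2}\varphi_1(a)^*$, is compact irrespective of the sign of $\kappa$; your outline gestures at this but does not carry out the domain analysis. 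Since the theorem is exactly the assertion that these estimates can be made, a complete write-up would have to supply them (or, as the paper does, simply cite Kucerovsky).
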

Here the resolvent of $D$ is said to be {\it compatible} with $D'$ if there is a dense submodule $\mc{W} \subset E_1 \hat \otimes E_2$ such that $D'(i\mu +D)^{-1}(i\mu ' +D')^{-1}$ is defined on $\mc{W}$ for any $\mu , \mu ' \in \real \setminus \mbk{0}$. It holds for example in the case that $\dom D \subset \dom D'$.

\begin{proof}[Proof of Theorem \ref{thm:jsf}]
According to Lemma \ref{lem:pair}, the remaining part for the proof is that the left hand side coincides with the pairing $\ebk{[\ind D], [\slashed{\mf{D}}_B]}_n$. Here this pairing is given by the Kasparov product $KK(\comp , C(B) \hat{\otimes } \Cliff _n) \otimes KK(C(B) \hat{\otimes} \Cliff _n, \comp) \to \zahl$. It is computed as follows. 
\ma{
&\lbk{\mc{L}^2(M,E \hat{\otimes} \Cliff _n) , 1 , D } \otimes _{C(B) \hat{\otimes} \Cliff _n} \lbk{L^2(B, \slashed{\mf{S}}_\comp(B)) , m , \slashed{\mf{D}}_B}\\
&=\lbk{L^2(M, (E \hat \otimes \Cliff _n) \hat {\otimes} _{\Cliff _n} \pi ^*\slashed{\mf{S}}_\comp (B)) , 1 , \slashed{\mf{D}}_B \times D}\\
&=\lbk{L^2(M,  \pi ^*\slashed{\mf{S}}_\comp (B)^ E) , 1 , \slashed{\mf{D}}_B \times D}.
} 
Now the rest to prove is that $D_b+D_f$ satisfies conditions 1, 2, and 3 of Theorem \ref{thm:Kas}. 

For any $\sigma \in C^\infty (M,E)$ and $\xi \in C^\infty (B,\slashed{\mf{S}}_\comp(B))$, the Leibniz rule of $\pi ^*\slashed{\mf{D}}_B$ implies that 
\ma{(D_b+D_f) T_\sigma \xi &= (D_b +D_f)(\sigma \cdot \pi^*\xi)=(D_b+D_f) x \cdot \pi^*\xi +\sigma \cdot D_b\pi ^*\xi \\
&=T_{(D_b+D_f)\sigma}\xi + \sigma \cdot \pi ^* (\slashed{\mf{D}}_B \xi).}
Therefore $(D_b+D_f)T_\sigma -T_\sigma \slashed{\mf{D}}_B=T_{(D_b+D_f)\sigma}$ is a bounded operator and hence condition 1 holds. Condition 2 holds since $\dom (D_b+D_f) \subset \dom D_f$. For any $\xi \in C^\infty(M,\slashed{\mf{S}}_\comp ^E(M))$, which is dense in the domain, 
\ma{\ebk{D_f\xi, (D_b+D_f)\xi} + \ebk{(D_b+D_f)\xi , D_f\xi}&=\ebk{[D_b,D_f]\xi ,\xi} +\ssbk{D_f\xi}^2.}
Condition 3 follows from it and Lemma \ref{lem:ineq}.
\end{proof}

\begin{remk}\label{rem:findim}
The calculus above is motivated by that of Connes-Skandalis~\cite{ConnesSkandalis1984}, in which they dealt with principal symbols and zeroth order pseudodifferential operators. Here we use the unbounded operators directly to apply it for more general cases. For example, by the same argument we obtain a similar formula
$$\ind _0(D+A(x))= \jsf(\mbk{A(x)})$$
for a smooth family of mutually commuting self-adjoint complex coefficient matrices $A(x)=(A_1(x),\ldots ,A_n(x))$. Other examples are given in the next section. 
\end{remk}

\subsection{A Callias type index theorem for open manifolds}\label{section:3.2}

Now we consider generalizing our index theorem for the case of noncompact base spaces. The pairing of homology and cohomology works in the noncompact case if the cohomology is replaced with the one with compact support. We can deal with it in the context of an infinite dimensional analogue of Callias-type operators \cite{Callias1978}. Here we use fiberwise elliptic operators as the potential term in the original theory of Callias. First we define the admissibility of a connective $K$-cocycle (see also \cite{Bunke1995}).

\begin{defn}
A continuous family of commuting Fredholm $n$-tuples $\mbk{D_1,\ldots,D_n}$ parametrized by a complete Riemannian manifold $B$ is said to be {\it admissible} if there is a constant $c>0$ that satisfies the following.
\begin{enumerate}
\item $D(x)^2  \geq \kappa >0 $ for  $x \in X \setminus K$, 
\item There are $C_1>0$ and $C_2>0$ such that $\ebk{([D_b,D_f]+D_f^2)\xi,\xi} \geq C_1\ssbk{D_f\xi}^2- C_2\ssbk{\xi}^2$ and $\kappa C_1>C_2$.
\end{enumerate}
\end{defn}

Actually the second condition is not essential. 

\begin{lem}
For any continuous family of commuting Fredholm $n$-tuples $\mbk{D_1,\ldots,D_n}$ parametrized by a complete $n$-dimensional Riemannian manifold $B$ that satisfies condition 1 above, there is some $t >0$ such that $tD:=(tD_1,\ldots,tD_n)$ is admissible.
\end{lem}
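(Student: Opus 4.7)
The plan is to exploit the different scaling behavior of the two sides of condition 2 of admissibility. Under the rescaling $D \mapsto tD$ the commutator transforms linearly, $[D_b,tD_f]=t[D_b,D_f]$, while the positive square $(tD_f)^2=t^2D_f^2$ and the lower bound $(tD)^2\geq t^2\kappa$ outside $K$ both transform quadratically in $t$. Condition 2---which demands that the positive quadratic term dominate the commutator at the scale set by condition 1---should therefore be forced to hold for all sufficiently large $t$.

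Concretely, I would fix some $\alpha>0$ and apply Lemma \ref{lem:ineq} (extended to the present setting, see below) to obtain $C>0$ with
\ma{\ebk{[D_b,D_f]\xi,\xi}\geq -\alpha\ssbk{D_f\xi}^2-C\ssbk{\xi}^2.}
Multiplying by $t$ and adding $t^2\ssbk{D_f\xi}^2 = \ssbk{tD_f\xi}^2$ then gives
\ma{\ebk{([D_b,tD_f]+(tD_f)^2)\xi,\xi}\geq (t^2-t\alpha)\ssbk{D_f\xi}^2-tC\ssbk{\xi}^2=(1-\alpha/t)\ssbk{tD_f\xi}^2-tC\ssbk{\xi}^2.}
So the candidate admissibility constants for $tD$ are $\kappa_t=t^2\kappa$, $C_1^{(t)}=1-\alpha/t$, and $C_2^{(t)}=tC$, and the required inequality $\kappa_t C_1^{(t)}>C_2^{(t)}$ simplifies to $(t-\alpha)\kappa>C$, i.e.\ $t>\alpha+C/\kappa$; this bound also makes $C_1^{(t)}$ positive automatically, so any such $t$ does the job.

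The only non-routine point, which I would verify first, is that Lemma \ref{lem:ineq} really yields a single global constant $C$ in this noncompact setting. Its proof reduces to boundedness of the zeroth-order fiberwise pseudodifferential operator $[D_b,D_f](1+D_f^2)^{-1/2}$, which is a local property on $B$; since the family is assumed to define a compactly supported $k^n$-cocycle and condition 1 provides control at infinity, a uniform bound goes through by a partition-of-unity argument exactly as in the closed-base case. Granting this, the rescaling computation above completes the argument.
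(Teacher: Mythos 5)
Your proposal is correct and follows essentially the same route as the paper: rescale $D\mapsto tD$, note $[D_b,tD_f]=t[D_b,D_f]$ versus the quadratic scaling of $(tD_f)^2$ and of the bound in condition 1, control the commutator via the Lemma \ref{lem:ineq} estimate (i.e.\ boundedness of $[D_b,D_f](1+D_f^2)^{-1/2}$), and check $\kappa_t C_1^{(t)}>C_2^{(t)}$ for large $t$. The paper reruns the $\lambda$-calculus with the specific choice $\lambda=R^{-1/2}$, $R=\ssbk{[D_b,D_f](1+D_f^2)^{-1/2}}^2$, rather than quoting Lemma \ref{lem:ineq} with a free $\alpha$, and it likewise takes the global finiteness of this constant over the noncompact base for granted, so the uniformity point you flag is treated no more carefully there than in your sketch.
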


\begin{proof}
By a similar calculus to the one in Lemma \ref{lem:ineq} (we replace $D_f$ in the first term with $tD_f$ but do not replace the one that arises in $(1+D_f^2)$ in the middle part) we show that for any $\lambda >0$
\ma{
\ebk{[D_b,tD_f]\xi,\xi}&=-\frac{1}{2}\lambda^{2}R\ebk{D_f\xi,D_f\xi}- \frac{1}{2}(\lambda ^2R+\lambda^{-2})\ebk{\xi,\xi}.}
where we denote that $R:=\ssbk{[D_b,D_f](1+D_f^2)^{-1/2}}^2$. Now if we choose $\lambda =R^{-1/2}$, then 
$$\ebk{([D_b,tD_f]+(tD_f)^2) \xi,\xi} \geq \frac{t^2}{2} \ssbk{D_f\xi}^2 - \bk{\frac{t^2}{2} + R}  \ssbk{\xi}^2.$$
Now we can take a constant $\kappa$ in condition 1 for $tD_f$ as $t\kappa$. When we set $C_1=\frac{t^2}{2}$ and $C_2=\frac{t^2}{2}$, for sufficiently large $t>0$ the inequality $(t\kappa) C_1 \geq C_2 $ holds and hence the constants $t\kappa$, $C_1$, and $C_2$ satisfies condition 2.
\end{proof}

Now we introduce a geometric setting and an index Theorem for the noncompact case.

Let $B$ be a complete $n$-dimensional manifold, $Z \to M \to B$ a smooth fiber bundle over $B$ with fixed decomposition of the tangent bundle $TM \cong T_VM \oplus T_HM$, $E$ a smooth complex vector bundle over $M$ and $\mbk{D_1,\ldots,D_n}$ an $n$-tuple of fiberwise first order pseudodifferential operators on $E$ that satisfies the Condition \ref{cond:comm}. Moreover we assume that $\mbk{D_1,\ldots,D_n}$ is admissible. 

\begin{thm}\label{thm:jsfopen}
In the above situation, the operator $\pi ^*\slashed{\mf{D}}_B +D(x)$ is Fredholm and the following formula holds.
\ma{\ind _0(\pi^*\slashed{\mf{D}}_B +  D(x))=\jsf \mbk{D(x)}}
\end{thm}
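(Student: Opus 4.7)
The plan is to mirror the proof of Theorem~\ref{thm:jsf} with two non-compact adaptations: a Callias-type Fredholm argument for $\pi^{*}\slashed{\mf{D}}_{B}+D(x)$, and the compactly supported variant of the Kasparov pairing underlying Lemma~\ref{lem:pair}. The two admissibility conditions are tailored for exactly these two adaptations: condition~(1) forces the class $[\ind D]$ into the compactly supported group $\tilde{k}^{n}_{\rm cpt}(B)$ and provides uniform invertibility of the combined operator outside a compact set, while condition~(2) supplies the form inequality needed in Kucerovsky's criterion (in place of Lemma~\ref{lem:ineq}).

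For Fredholmness, I start by expanding, with respect to the common Clifford coefficients of $D_{b}$ and $D_{f}$,
\ma{(\pi^{*}\slashed{\mf{D}}_{B}+D(x))^{2}=\slashed{\mf{D}}_{B}^{2}+D_{f}^{2}+[D_{b},D_{f}].}
Admissibility~(2) together with the outside-$K$ estimate $D_{f}^{2}\geq \kappa$ then yields
\ma{\ebk{(\pi^{*}\slashed{\mf{D}}_{B}+D(x))^{2}\xi ,\xi}\geq (\kappa C_{1}-C_{2})\ssbk{\xi}^{2}>0}
for any $\xi$ compactly supported in $B\setminus K$. Combined with fiberwise ellipticity on $K$ (which makes the resolvent locally compact) and completeness of $B$ (to close $\pi^{*}\slashed{\mf{D}}_{B}+D(x)$ and construct cutoff parametrices), a standard Anghel/Callias argument produces a parametrix with compact remainder and hence the Fredholm property.

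For the index formula, admissibility~(1) shows, via the compactly supported discussion in Section~\ref{section:2}, that $\mbk{D(x)}$ defines a class $[\ind D]\in \tilde{k}^{n}_{\rm cpt}(B)\subset KK(\comp ,C_{0}(B)\hat\otimes \Cliff_{n})$. The Chern-Dold / Todd-class computation of Lemma~\ref{lem:pair} transfers verbatim to the pairing between $\tilde{k}^{n}_{\rm cpt}(B)$ and the $Spin^{c}$ $K$-homology fundamental class $[\slashed{\mf{D}}_{B}]\in KK(C_{0}(B)\hat\otimes \Cliff_{n},\comp)$ of the complete manifold $B$, giving $\jsf \mbk{D(x)}=\ebk{[\ind D],[\slashed{\mf{D}}_{B}]}_{n}$. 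To identify this Kasparov product with $\ind_{0}(\pi^{*}\slashed{\mf{D}}_{B}+D(x))$ I apply Theorem~\ref{thm:Kas}. Conditions~(1) and~(2) are verified exactly as in the proof of Theorem~\ref{thm:jsf} from the Leibniz identity $(D_{b}+D_{f})T_{\sigma}-T_{\sigma}\slashed{\mf{D}}_{B}=T_{(D_{b}+D_{f})\sigma}$ and the inclusion $\dom (D_{b}+D_{f})\subset \dom D_{f}$; for condition~(3) the same identity
\ma{\ebk{D_{f}\xi ,(D_{b}+D_{f})\xi}+\ebk{(D_{b}+D_{f})\xi ,D_{f}\xi} &= \ssbk{D_{f}\xi}^{2}+\ebk{[D_{b},D_{f}]\xi ,\xi} \\ &\geq C_{1}\ssbk{D_{f}\xi}^{2}-C_{2}\ssbk{\xi}^{2}\geq -C_{2}\ssbk{\xi}^{2}}
gives a global lower bound, which is all Kucerovsky requires.

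The main obstacle is the Fredholmness step: Lemma~\ref{lem:ineq} in the original proof was a purely local estimate, but here the argument must be patched with the at-infinity invertibility coming from admissibility~(1) to produce a \emph{global} parametrix on the possibly non-compact total space $M$. Once this Callias-type parametrix is in place, admissibility~(2) makes Kucerovsky's criterion available essentially for free, the Kasparov product is then represented by $\pi^{*}\slashed{\mf{D}}_{B}+D(x)$, and the compactly supported version of Lemma~\ref{lem:pair} closes the loop to the joint spectral flow.
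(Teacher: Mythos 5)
Your proposal is correct and takes essentially the same route as the paper: the index formula is obtained by rerunning the Kucerovsky/Lemma~\ref{lem:pair} machinery of Theorem~\ref{thm:jsf}, with admissibility~(2) supplying the semiboundedness in Kucerovsky's condition~(3) and admissibility~(1) making $[\ind D]$ a compactly supported class that pairs with $[\slashed{\mf{D}}_B]$, while Fredholmness comes from a coercivity estimate outside the compact set $\pi^{-1}(K)$. The paper packages that last step slightly differently --- a Gromov--Lawson-style argument showing the spectral subspace $\bigoplus_{|\lambda|<\alpha}E_\lambda$ is finite dimensional via the bound $\ssbk{\sigma}\leq C\ssbk{\sigma}_K$ and a parametrix --- rather than citing the Anghel/Callias criterion as you do, but the underlying admissibility estimate and idea are the same.
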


\begin{proof}
The proof is essentially the same as for Theorem \ref{thm:jsf} and the remaining part is to show that $\slashed{\mf{D}}_B + D(x)$ is a Fredholm operator. We prove it by using an estimate motivated by Theorem 3.7 of Gromov-Lawson~\cite{GromovLawson1984}. Here we use the convention $D_b$ and $D_f$ again. Let $E_\lambda$ ($\lambda \in \real$) be the eigenspace for the self-adjoint operator $D_b+D_f$. Now we fix an $\alpha >0$. Then for any $\sigma \in \bigoplus _{|\lambda |<\alpha} E_\lambda $,
\ma{ 0 &\leq \ssbk{D_b\sigma }^2 \leq \ssbk{(D_b+D_f)\sigma }^2 - (([D_b,D_f]+D_f^2)\sigma ,\sigma )\\
&\leq \alpha \ssbk{\sigma }^2 - C_1 \ssbk{D_f\sigma }^2 +C_2 \ssbk{\sigma }^2\\
&\leq (\alpha +C_2) \ssbk{\sigma }^2 - C_1 \ssbk{D_f \sigma }^2 _{B \setminus K}\\
&\leq (\alpha -\kappa C_1 +C_2) \ssbk{\sigma }^2 + \kappa C_1 \ssbk{\sigma }_K^2.}
By assumption we can retake $\alpha >0$ such that $\kappa C_1-C_2 >\alpha$. Then there is a constant $C>0$ and we obtain an estimate
$$\ssbk{\sigma } \leq C\ssbk{\sigma }_K.$$
Now we take a parametrix $Q$ of the elliptic operator $D_b+D_f$ and $\mc{S}:=1-QD$. Let $P$ be the projection from $L^2(M, \pi ^* \slashed{\mf{S}}_\comp^E (B))$ to the subspace $L^2(\pi ^{-1}(K), \pi ^* \slashed{\mf{S}}_\comp^E (B)|_{\pi ^{-1(K)}})$. Then $P\mc{S}$ is a compact operator and 
$$\ssbk{P\mc{S}\sigma } \geq \ssbk{P\sigma}-\ssbk{PDQ\sigma} \geq C\ssbk{\sigma} -\alpha \ssbk{PQ} \ssbk{\sigma } .$$
Choosing $\alpha >0$ sufficiently small, we see that $P\mc{S}$ is bounded below by $C-\alpha \ssbk{PQ} >0$. It implies that $\bigoplus E_\lambda $ is finite dimensional since a compact operator on it is bounded below by some positive number.
\end{proof}

\begin{exmp}[the case of $B=\real$]
Let $\mbk{A(t)}_{t \in \real}$ be a continuous family of self-adjoint matrices such that there is a $\lambda >0$ and two self-adjoint matrices $A_+$, $A_-$ such that $A_t=A_-$ for $t \leq -\lambda $ and $A_t=A_+$ for $ \lambda \leq t$. Now as is noted in Remark \ref{rem:findim}, we have a finite dimensional analogue of Theorem \ref{thm:jsfopen}. In the $1$-dimensional case it is of the form
$$\ind (\frac{d}{dt} + A_t)={\rm sf} (\mbk{A_t}).$$
Now obviously its right hand side is given by the difference
$$\# \mbk{ \text{positive eigenvalues of $A_-$}}- \# \mbk{\text{negative eigenvalues of $A_+$}}.$$
It is nonzero in general whereas in the case that the parameter space is a circle we have to deal with operators on an infinite dimensional Hilbert space to obtain an example of nontrivial indices. 
\end{exmp}

\begin{exmp}
Let $B$ be a complete $Spin^c$ manifold, $Z_1, \ldots , Z_n$ be closed odd dimensional $Spin^c$ manifolds and $\mbk{g^1_x,\ldots ,g_x^n}_{x \in B}$ be a smooth family of metrics on $M_1 ,\ldots, M_n$ such that the scalar curvature of the product manifold $Z:=Z_1 \times \cdots \times Z_n$ is uniformly strictly positive outside a compact subset $K \subset B$. We denote by $\slashed{D}_{i,x}$ the Dirac operator on $Z_i$ with respect to the metric $g^i_x$. Then there is a constant $\lambda >0$ such that $(\lambda \slashed{D}_{1,x},\ldots,\lambda \slashed{D}_{n,x})$ is an admissible family of commuting Fredholm $n$-tuples and the Fredholm index of the $Spin^c$ Dirac operator on $M:=B \times Z$ with respect to the product metric coincides with its joint spectral flow. This gives a map
$$\ind : [(B^+,*),(\mc{R} (Z_1,\ldots,Z_n),\mc{R}(Z_1,\ldots,Z_n)_{\geq \lambda})] \to \zahl$$
where $\mc{R}(Z_1, \ldots , Z_n)$ is the product of spaces of Riemannian metrics $\mc{R}(Z_1) \times \cdots \mc{R}(Z_n)$ and $\mc{R}(Z_1,\ldots,Z_n)_{\geq \lambda}$ is the subspace of $\mc{R}(Z_1,\ldots,Z_n)$ such that the scalar curvature of the product metric $(Z_1,g_1) \times \cdots \times (Z_n,g_n)$ is larger than $\lambda >0$ (its homotopy type is independent of the choice of $\lambda$). 
In particular when we choose $B$ as $\real ^n$ the left hand side is isomorphic to $\pi _{n-1} (\mc{R}(Z_1,\ldots,Z_n)_{\geq \lambda})$ because $\mc{R}(Z_1,\ldots,Z_n)$ is contractible. 
\end{exmp}

\subsection{Families twisted by a vector bundle}
In this section we generalize the joint spectral flow and its index theorem for the case of $V$-twisted families of commuting Fredholm $n$-tuples introduced at the end of Section \ref{section:2}. It is essential in Section \ref{section:4.1}. 

Let $V$ be a real vector bundle. We denote by $P_{V}$ the fiber bundle $GL(V) \times _{GL(n,\real)} P(S^{n},*)$. The set of homotopy classes of continuous sections $\pi _0 \Gamma (X,P_{V})$ forms the twisted cohomology group $H^{V}(X;\zahl)$. Now, twists of the ordinary cohomology theory is classified by $H^1(X,\zahl /2)$ and in our case the corresponding cohomology classes are determined by the orient bundle of $V$. As is definition \ref{def:jsf}, there is the continuous map $j: F_{V}(\mc{H}) \to P_{V}$, which induces the natural transform $j_*:k^{V} \to H^{V}$.

\begin{defn}\label{def:jsftwisted}
Let $X$ be an oriented closed manifold of dimension $n$ and $V$ an $n$-dimensional oriented vector bundle. For a continuous family $\mbk{T(x)}_{x \in X}$ of commuting Fredholm $n$-tuple twisted by $V$, we say that the integer $\ebk{j_* [\mbk{T(x)}] , [X]} \in \zahl$ is its {\it joint spectral flow} and denote it by $\jsf(\mbk{T(x)})$. Here we identify two groups $H^V(X;\zahl)$ and $H^n(X;\zahl)$ in the canonical way. For a continuous family of bounded (resp. unbounded) commuting Fredholm $n$-tuple $\mbk{T(x)}$ twisted by $V$, we say $\jsf(\iota \mbk{T(x)})$ is its joint spectral flow and denote it simply by $\jsf(\mbk{T(x)})$. 
\end{defn}

Now we introduce the corresponding geometric setting and prove a generalization pf the joint spectral flow index theorem \ref{thm:jsf} for a family twisted by a $Spin^c$ vector bundle.

Let $B$ be a closed $n$-dimensional $Spin^c$ manifold, $Z \to M \to B$ a smooth fiber bundle over $B$ such that the total space $M$ is also a $Spin^c$ manifold, $V$ be an $n$-dimensional $Spin^c$ vector bundle over $B$, and $E$ a smooth complex vector bundle over $M$. We denote by $\Psi _f^1 (M,E)$ the fiber bundle over $B$ whose fiber on $x \in B$ is the space of first order pseudodifferential operators on $\Gamma (M_x,E|_{M_x})$. We consider a map of $B$-bundles $\mbk{D_v(x)}_{(x,v) \in V \setminus \mbk{0}}:V \setminus \mbk{0} \to \Psi ^1 _f (M,E)$ that satisfies the following conditions.

\begin{cond}\label{cond:commtwist}
\item[1.] The operators $D_v(x)$ and $D_w(x)$ commute for any $v,w \in V_x \setminus \mbk{0}$.
\item[2.] The equality $g \cdot (D_{v_1}(x),...,D_{v_n}(x))=(D_{g \cdot v_1}(x) , \ldots , D_{g \cdot v_n}(x))$ holds for any $g \in GL(n;\real)$ and a basis $(v_1,...,v_n) $ of $V_x$.
\item[3.] The square sum $\sum _{v_1,\ldots,v_n {\rm : \ ONB}} D_{v_i} ^2$ is fiberwise elliptic, that is, its principal symbol is invertible on $S(T_VM)$.
\end{cond}

Then it forms a continuous family of unbounded commuting Fredholm $n$-tuples $\mbk{D(x)}$ twisted by $V$. 

Next, we replace the fundamental $KK$-class on $B$ with the one that is compatible with $\mbk{D(x)}$. Instead of $\slashed{\mf{S}}_\comp (M)$, we consider the spinor bundle $\slashed{\mf{S}}_\comp(B;V):=\slashed{S}_\comp (TB \oplus V)$ for an even dimensional $Spin^c$ vector bundle $TB \oplus V$. It is equipped with the action of $\Cliff (TB) \hat \otimes \Cliff (V)$. Here we denote by $c $ and $h$ its restriction on $\Cliff(V) \hat \otimes 1$ and $1 \hat \otimes \Cliff (TB)$ respectively. Now we define a pull-back of the Dirac operator $\pi^*\slashed{\mf{D}}_B^V$ twisted by $E$ in a similar way to the one in Section \ref{section:3.1}. 

\begin{thm}\label{thm:jsftwisted}
Let $B$, $M$ and $D(x)$ be as above. Then the following formula holds.
\ma{\ind (\pi ^*\slashed{\mf{D}}_B^V + D(x))=\jsf \{ D(x) \} .}
\end{thm}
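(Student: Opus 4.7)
The plan is to mirror the proof of Theorem \ref{thm:jsf} in the $V$-twisted setting. As there, the proof has two steps: first, expressing $\jsf\{D(x)\}$ as a Kasparov pairing; second, applying Kucerovsky's criterion (Theorem \ref{thm:Kas}) to identify that pairing with the Fredholm index of $\pi^*\slashed{\mf{D}}_B^V + D(x)$.

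For the first step, I would introduce the twisted fundamental class
\[
[\slashed{\mf{D}}_B^V] := [L^2(B, \slashed{\mf{S}}_\comp(B;V)), c, \slashed{\mf{D}}_B^V(1+(\slashed{\mf{D}}_B^V)^2)^{-1/2}] \in KK(\Gamma\Cliff(V), \comp),
\]
in which $\Gamma\Cliff(V)$ acts by the Clifford multiplication $c$ on the $V$-factor of $\slashed{S}_\comp(TB \oplus V)$ and hence commutes with the $\Cliff(TB)$-Dirac operator $\slashed{\mf{D}}_B^V$. I would then prove the twisted analogue of Lemma \ref{lem:pair},
\[
\jsf\{D(x)\} = \ebk{[\ind D], [\slashed{\mf{D}}_B^V]},
\]
by running the Chern-character calculation of Lemma \ref{lem:pair} in the $V$-twisted setting. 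The Thom isomorphism for connective $K$-theory established just before Section \ref{section:3}, together with the hypothesis that $V$ is $Spin^c$, trivializes the $V$-twist at the level of rational cohomology, so $j_\ast$ is identified with the $n$-th Chern-Dold character as in Proposition \ref{prp:ch}. The Chern character of $[\slashed{\mf{D}}_B^V]$ contributes $\Td(TB \oplus V) = \Td(TB)\Td(V)$, of which only the zeroth component is relevant since $\ch[\ind D]$ is concentrated in top degree, and the pairing recovers $\jsf\{D(x)\}$.

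For the second step, I would apply Kucerovsky's theorem to the unbounded Kasparov bimodules $(\mc{L}^2_f(M, E) \hat\otimes \Cliff(V), 1, D(x))$ and $(L^2(B, \slashed{\mf{S}}_\comp(B;V)), c, \slashed{\mf{D}}_B^V)$, whose tensor product is $(L^2(M, \pi^*\slashed{\mf{S}}_\comp^E(B;V)), 1, \pi^*\slashed{\mf{D}}_B^V + D(x))$. Condition 1 of Theorem \ref{thm:Kas} follows from the Leibniz rule for $\pi^*\slashed{\mf{D}}_B^V$ as in the proof of Theorem \ref{thm:jsf}, and condition 2 holds because $\dom(\pi^*\slashed{\mf{D}}_B^V + D) \subset \dom D$. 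Condition 3 requires a $V$-twisted analogue of Lemma \ref{lem:ineq}. I would establish this locally: in a coordinate chart on $B$ over which $V$ admits an orthonormal frame $\{e_1, \ldots, e_n\}$, the fiberwise Dirac operator reads $D = \sum c(e_i) D_{e_i}(x)$, and the argument of Lemma \ref{lem:ineq} shows that $[\pi^*\slashed{\mf{D}}_B^V, D]$ is fiberwise first order pseudodifferential. Part 2 of Condition \ref{cond:commtwist} guarantees that $D$ is independent of the local frame, so a partition-of-unity argument glues the local inequalities into the global estimate $\ebk{[\pi^*\slashed{\mf{D}}_B^V, D]\xi, \xi} \geq -\alpha \ssbk{D\xi}^2 - C \ssbk{\xi}^2$ required by Kucerovsky.

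The principal obstacle will be the twisted version of Lemma \ref{lem:pair}: one must verify that the Chern-character and Thom-isomorphism identifications pass through compatibly with the unbounded-Kasparov-module representatives used throughout, and that the $\Gamma\Cliff(V)$-actions on the two factors of the pairing are matched correctly. Once this cohomological identification is in place, the operator-theoretic second step presents no essentially new difficulties beyond those already handled in Theorem \ref{thm:jsf}.
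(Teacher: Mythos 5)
Your proposal is correct in outline, but it takes a genuinely different route from the paper. The paper does \emph{not} redo Lemma \ref{lem:pair} or Lemma \ref{lem:ineq} in the twisted setting; instead it embeds $V$ into a trivial bundle $\underline{\real}^p$ with orthogonal complement $W$, multiplies the family by the Clifford (Bott) element $C_W$ so that $D\times C_W$ becomes an \emph{untwisted} but admissible family over the noncompact total space of $W$, applies the Callias-type Theorem \ref{thm:jsfopen} to get $\ind(D_b+D_f+D_W+C_W)=\jsf\{D(x)\}$, and then strips off the auxiliary factor using the harmonic-oscillator identity $[D_W+C_W]=1\in KK(C(B),C(B))$ together with associativity of the Kasparov product. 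Your route instead generalizes both ingredients of Theorem \ref{thm:jsf} directly: a twisted fundamental class $[\slashed{\mf{D}}_B^V]\in KK(\Gamma\Cliff(V),\comp)$ with a twisted Lemma \ref{lem:pair} (via the $Spin^c$ trivialization of the $k$-twist, the orientation trivialization of the $H$-twist, and the Chern--Dold character), and a twisted Lemma \ref{lem:ineq} feeding into Kucerovsky's criterion. What each buys: your argument stays entirely in the compact-base setting, needs neither the embedding $V\subset\underline{\real}^p$ nor Section \ref{section:3.2}, and makes the role of the twisted fundamental class explicit; the paper's argument avoids precisely the bookkeeping you identify as the principal obstacle --- checking that the Morita equivalence $\Gamma\Cliff(V)\sim C(B)\hat\otimes\Cliff_n$, the Thom/$Spin^c$ trivialization of $k^V$, the orientation trivialization of $H^V$, and $j_*$ are mutually compatible (including the odd-$n$ reduction of Lemma \ref{lem:pair}) --- at the cost of the stabilization trick and the implicit fact that $\jsf(D\times C_W)=\jsf(D)$. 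Two details you should not gloss over in your version: in the twisted commutator estimate the local expression $D=\sum c(e_i)D_{e_i}$ produces extra terms of the form $h(f_j)c(\nabla_{f_j}e_i)D_{e_i}$ from differentiating the frame, which are fiberwise first order and hence absorbed by $(1+D_f^2)^{-1/2}$ exactly as in Lemma \ref{lem:ineq}; and in the top-degree pairing any line-bundle ambiguity in identifying $\slashed{S}_\comp(TB\oplus V)$ with $\slashed{\mf{S}}_\comp(B)\hat\otimes\slashed{S}_\comp(V)$ only perturbs the Todd class in positive degrees, so it cannot affect the answer --- which is why your Chern-character step, once the compatibilities are checked, does close.
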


\begin{proof}
First we embed $V$ into a trivial real vector bundle $\underline{\real}^p$ linearly and denote its orthogonal complement by $W$. 

We define the following $KK$-classes
\ma{[D_W]&:=\lbk{\mc{L}_f^2(W ,\Cliff (\pi ^*W)), m , D_W:=\sum h(e_i) \frac{\partial}{\partial w_i}} \in KK(\Gamma _0 \Cliff (\pi^*W) , C(B)),\\
[C_W]&:=\lbk{\Gamma _0 \Cliff (\pi^*W) , m , C_W:=\sum c(e_i) w_i} \in KK(C(B),\Gamma _0 \Cliff (\pi^*W)),}
where $\mbk{e_i}$ is an orthonormal basis on $W_x$ and $w_i=\ebk{w,e_i}$ the coordinate functions with respect to $\mbk{e_i}$. We mention that $D_W$ and $C_W$ are independent of the choice of $\mbk{e_i}$ and hence they are well-defined. 
Then, the theory of harmonic oscillators (see for example Section 1.13 of \cite{HigsonGuentner2004}) shows that $[D_W] \otimes _{\Gamma _0 \Cliff (\pi ^*W)} [C_W]=[D_W+C_W] =1 \in KK(C(B) ,C(B))$ because the kernel of the harmonic oscillator is one dimensional and $O(n)$-invariant.
Now
$$D \times C_W=(D_{v_1},\ldots,D_{v_n},c_{w_1},\ldots,c_{w_k})$$
is a smooth family of commuting Fredholm $n$-tuples twisted by $V \oplus W \cong \underline{\real}^p$. Moreover it is admissible on $W$ because $(D \times C_W)^2=D^2+\ssbk{w}^2$. According to Theorem \ref{thm:jsfopen}, 
\ma{\ind (D_b+D_f+D_W+C_W)=\jsf (\mbk{D \otimes C_W(x,w)})=\jsf (\mbk{D(x)}).}
On the other hand, by the associativity of the Kasparov product 
\ma{\ind (D_b+D_f+D_W+C_W)&=[D_f+C_W]\otimes_{\Gamma_0(\pi^*W)} [D_W+D_b]\\
&=([D_f] \otimes _{C(B)} [C_W]) \otimes _{\Gamma_0(\pi^*W)}([D_W] \otimes _{C(B)} [D_f])\\
&=[D_f] \otimes _{C(B)}[D_b]=\ind (D_b+D_f).}
\end{proof}

Some examples of geometric situations that this theorem is applied to is introduced in Section \ref{section:4.1}.


\section{Applications}\label{section:4}

In this section we introduce some applications of the joint spectral flow and its index theorem. 

\subsection{Witten deformation and localization}\label{section:4.1}

It is easy to obtain the joint spectral flow of a continuous family of commuting Fredholm $n$-tuples when their joint spectra intersect with zero transversally. In such cases we often reduce the problem of computing the index (which usually requires to solve some linear partial differential equations or to integrate some characteristic classes) to that of counting the number of points with multiplicity.

Most typical example is the classical Poincar\'e-Hopf theorem. 

\begin{cor}[The Poincar\'e-Hopf theorem]
Let $M$ be a $Spin ^c$ manifold and $X$ a vector field on $M$ whose null points $M^X:=\mbk{p \in M \mid X(p)=0}$ are isolated. Then
$$\chi (M)=\sum _{p \in M^X} \nu _p$$
\end{cor}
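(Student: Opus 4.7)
The plan is to apply Theorem~\ref{thm:jsftwisted} to the trivial fiber bundle $\pt \to M \to M$, with base $B = M$, twisting bundle $V = TM$ (which is $Spin^c$ since $M$ is), and coefficient bundle $E = \comp$, feeding the vector field $X$ into the twisted family.

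First I would set up the $TM$-twisted family of commuting Fredholm $n$-tuples. Because $Z = \pt$, a fiberwise first-order pseudodifferential operator on $E = \comp$ is just a scalar, so for $(x,v) \in TM \setminus \mbk{0}$ I set
\ma{ D_v(x) := \ebk{X(x), v}_{T_xM} \in \real \subset \mbb{B}(\comp). }
Condition~\ref{cond:commtwist}.1 is automatic (scalars commute), Condition~\ref{cond:commtwist}.2 is just linearity of $v \mapsto D_v(x)$, and Condition~\ref{cond:commtwist}.3 is vacuous since $T_VM = 0$. Thus $\mbk{D_v(x)}$ is a continuous $TM$-twisted family of commuting Fredholm $n$-tuples in the sense of Section~\ref{section:3}.

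Next I would identify the combined operator $\pi^*\slashed{\mf{D}}_B^V + D(x)$. With $B = M$ and $V = TM$, the twisted spinor bundle $\slashed{\mf{S}}_\comp(M;TM) = \slashed{S}_\comp(TM \oplus TM)$ carries commuting Clifford actions $h$ from $TB$ and $c$ from $V$. Via the standard bi-Clifford isomorphism $\slashed{S}_\comp(TM \oplus TM) \cong \Lambda^* T^*M \otimes \comp$, the action $h$ becomes the Clifford multiplication for which $\sum h(e_i) \nabla_{e_i}$ is the de Rham operator $d + d^*$, and $c$ becomes the ``symmetric'' action $c(v) = \epsilon_{v^\flat} + \iota_v$. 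Under this identification,
\ma{ D(x) = \sum_i c(e_i) D_{e_i}(x) = c(X(x)) = \epsilon_{X^\flat(x)} + \iota_{X(x)}, }
so that $\pi^*\slashed{\mf{D}}_B^V + D(x)$ is the Witten-deformed de Rham operator. Being a bounded perturbation of the elliptic operator $d+d^*$ on the closed manifold $M$, it has the same Fredholm index:
\ma{ \ind (\pi^*\slashed{\mf{D}}_B^V + D(x)) = \ind (d+d^*) = \chi(M). }

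Finally I would compute the joint spectral flow. Trivializing $V$ locally by an ONB $\mbk{e_i}$, the tuple $(D_{e_i}(x))_{i=1}^n$ is the component vector of $X(x)$, so the joint spectrum at $x$ is the single point $\mbk{X(x)} \subset \real^n$, which meets the origin exactly on the isolated zero locus $M^X$. On a small coordinate ball $B_\epsilon(p)$ around each zero $p$ the family restricts to a compactly-supported class in $\tilde{k}^n(S^n) \cong \zahl$ equal to the Poincar\'e-Hopf local index $\nu_p$, by the definition of the local degree. By Proposition~\ref{prp:ch}, $j_*: \tilde{k}^n(S^n) \to \tilde{H}^n(S^n;\zahl)$ is the identity on $\zahl$, so summing the local contributions and pairing with $[M]$ yields
\ma{ \jsf \mbk{D(x)} = \sum_{p \in M^X} \nu_p. }
Combining this with Theorem~\ref{thm:jsftwisted} produces the desired formula $\chi(M) = \sum_{p \in M^X} \nu_p$.

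The main obstacle I anticipate is the Clifford-algebraic identification in the second step: pinning down the isomorphism $\slashed{S}_\comp(TM \oplus TM) \cong \Lambda^* T^*M \otimes \comp$ together with its $\zahl/2$-grading so that $\pi^*\slashed{\mf{D}}_B^V$ genuinely becomes the de Rham operator $d + d^*$ (and not, for instance, the signature operator) and so that $c$ matches the sign conventions of the Witten deformation. With the paper's conventions fixed this is a standard calculation, and the remainder of the argument simply translates the geometric statement ``the Euler class is represented by the zero locus of a generic section of $TM$'' into the joint-spectral-flow language of Sections~\ref{section:2} and~\ref{section:3}.
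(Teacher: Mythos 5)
Your proposal is correct and follows essentially the same route as the paper: the paper likewise realizes the deformation $D_{\rm dR}+c(X)$ as the operator of Theorem \ref{thm:jsftwisted} with point fibers over $B=M$, twisting bundle $TM$, and the family $\ebk{e_i,X}$ acting on the one-dimensional space $\underline{\comp}$ (using Remark \ref{rem:findim}), then identifies the joint spectral flow with the sum of local indices. The only differences are presentational, e.g.\ the explicit framing as the trivial bundle $\pt \to M \to M$ and the naming of the two commuting Clifford actions on $\slashed{S}_\comp(TM\oplus TM)\cong \Lambda^*T^*M\otimes\comp$.
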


This proof is essentially the same as that of Witten~\cite{Witten1982}. Here we restrict $M$ to $Spin^c$ manifolds, but it is not an essential assumption. 

\begin{proof}
By the Hodge-Kodaira decomposition the Euler characteristic $\chi (M)$ can be computed as the index of the de Rham operator $D_{\rm dR}:=d+d^* : \Gamma (\bigwedge ^{\rm even/odd} TM) \to \Gamma (\bigwedge ^{\rm odd/even} TM)$. Now $\Cliff (TM)$ acts on $\Cliff(TM)$ in two ways, $c(v)\xi:=v \cdot \xi$ and $h(v)\xi:=\gamma(\xi)\cdot v$ (for $v \in TM$ and $\xi \in \Cliff (TM)$) where $\gamma $ is the grading operator on $\Cliff (TM) \cong \Cliff (TM)^0 \oplus \Cliff (TM)^1$. They induce the $\Cliff(TM) \hat \otimes \Cliff(TM)$-action on $\Cliff(TM)$ because $c(v)$ and $h(v)$ anticommute. Because $M$ is a $Spin^c$ manifold, it is a unique irreducible $\Cliff (TM \oplus TM)$-module $\slashed{S}_\comp (TM \oplus TM)$. By Leibniz's rule, 
\ma{D_{\rm dR}(\gamma (\xi)  \cdot X)=-\gamma (D_{\rm dR} \xi) \cdot X + (-1)^{\partial \gamma (\xi )}\gamma (\xi) \cdot D_{\rm dR}(X)}
where we use the fact that $D_{\rm dR}$ is an odd operator. It means that $D_{\rm dR}$ and $h(X)$ anticommute modulo bounded operator $(-1)^{\partial \xi+1} h(D_{\rm dR}(X))$. It shows that $D_{\rm dR}+th(X)$ is Fredholm for any $t>0$ because $(D_{\rm dR}+th(X))^2 =D_{\rm dR}^2 +t^2\ssbk{X}^2 + t[D_{\rm dR},h(X)]$ is a bounded perturbation of the Laplace operator $D_{\rm dR}^2$, which is positive and has compact resolvent. On the other hand, $h(X)=\sum \ebk{e_i,X}h(e_i)$ is a commuting $n$-tuple of Fredholm operators twisted by $TM$ (now we consider $\ebk{e_i,X}$ as Fredholm operators on the $1$-dimensional vector space $\underline{\comp}$). As a consequence of Theorem \ref{thm:jsftwisted} (and Remark \ref{rem:findim}), we have
\ma{\chi(M)&=\ind (D_{\rm dR})=\ind (D_{\rm dR}+h(X))\\
&=\jsf (\mbk{\ebk{e_i,X}})=\sum _{p \in M^X} \nu _p.}
The last equation follows from the definition of the joint spectral flow.
\end{proof}

Now we consider an infinite dimensional analogue of this approach for a localization problem of index.

Let $B$ be an $n$-dimensional closed $Spin^c$ manifold, $M_1,\ldots,M_n \to B$ fiber bundles such that each fiber $Z_1$, \ldots , $Z_n$ is an odd dimensional closed manifold and $T_VM_i$ are equipped with $Spin^c$ structures, and $E$ a complex vector bundle on $M:=M_1 \times _B \cdots \times _B M_n$. Now $TB \oplus \underline{\real}^n$ is a $2n$-dimensional vector bundle and hence there is a unique $\Cliff (TB \oplus \underline{\real}^n)$-module bundle $\slashed{S}_\comp (T_VM \oplus \underline{\real}^n)$. We denote by $\slashed{S}_\comp(T_VM_i) \cong \slashed{S}_\comp^0(T_VM_i) \oplus \slashed{S}_\comp^1(T_VM_i)$ a unique $\zahl/2$-graded $\Cliff (T_VM_i)$-module bundle, which is isomorphic to $\slashed{S}_\comp(T_VM_i \oplus \underline{\real})$. Then it is decomposed as tensor products as follows. 
\ma{\slashed{S}_\comp (T_VM \oplus \underline{\real}^n) &\cong \slashed{S}_\comp (T_VM_1 \oplus \underline{\real}) \hat \otimes \cdots \hat \otimes \slashed{S}_\comp (T_VM_n \oplus \underline{\real})\\
&\cong (\slashed{S}^0_\comp (T_VM_1) \hat{\otimes} \Cliff _1) \hat \otimes \cdots \hat \otimes (\slashed{S}^0_\comp (T_VM_n) \hat \otimes \Cliff _1)\\
&\cong \bk{\slashed{S}^0_\comp (T_VM_1) \otimes \cdots \otimes \slashed{S}^0_\comp (T_VM_n)} \hat{\otimes} \Cliff _n .}
Hereafter we denote $\slashed{\mf{S}}_{\comp ,f}(M;\underline{\real}^n):= \slashed{S}_\comp (T_VM \oplus \underline{\real }^n)$ and $\slashed{S}^0_{\comp ,f} (M;\underline{\real}^n):=\slashed{S}^0_\comp (T_VM_1) \otimes \cdots \otimes \slashed{S}^0_\comp (T_VM_n)$. The inclusions $T_VM \subset T_VM \oplus \underline{\real}^n$ and $\underline{\real}^n \subset T_VM \oplus \underline{\real}^n$ induce the actions of $\Cliff(TM)$ and $\Cliff _n$ on $\slashed{\mf{S}}_{\comp ,f} (M ; \underline{\real}^n)$. Under the above identification, a vector $v=v_1 \oplus \cdots \oplus v_n \in T_VM$ acts as $(c(v_1) \otimes 1 \otimes \cdots \otimes 1) \otimes c_1  + \cdots + (1 \otimes \cdots \otimes 1 \otimes c(v_n)) \otimes c_n$ and $\Cliff _n$ acts as $1 \otimes h$ (here we denote the left and twisted right actions of $\Cliff _n$ on $\Cliff _n$ by $c$ and $h$). Hence the fiberwise Dirac operator $\slashed{D}_f$ is decomposed as
$$\slashed{D}_f=c_1\slashed{D}_1 + \cdots +c_n\slashed{D}_n ,$$
where $\slashed{D}_i$'s are Dirac operators for the $M_i$ direction
\ma{\slashed{D}_i &: \Gamma (M, \slashed{S}_\comp (T_VM \oplus \underline{\real}^n) \xra{d} \Gamma (M, \slashed{S}_\comp (T_VM \oplus \underline{\real}^n) \otimes T^*M)\\
& \hspace{5em} \xra{p_{T_VM_i}} \Gamma (\slashed{S}_\comp (T_VM \oplus \underline{\real}^n)) \xra{c} \Gamma (M,\slashed{S}_\comp (T_VM \oplus \underline{\real}^n)).}
Similarly, the twisted spinor bundle $\slashed{\mf{S}}_{\comp ,f} ^E (M; \underline{\real}^n):=\slashed{S}_\comp  (T_VM \oplus \underline{\real}^n)\otimes E$ is isomorphic to $\slashed{S}^{0,E}_{\comp ,f}(M; \underline{\real}^n) \hat \otimes  \Cliff _n$. Moreover if $E$ is equipped with a connection $\nabla ^E$ whose curvature $R^E$ satisfies $R^E(X,Y)=0$ for any $X \in T_VM_i$ and $Y \in T_VM_j $ ($i \neq j$), then the Dirac operator twisted by $E$ is decomposed as $\slashed{D}^E=c_1 \slashed{D}^E_1 + \cdots + c_n\slashed{D}^E_n$ such that $\slashed{D}_i$ commutes with $\slashed{D}_j$. Now $(\slashed{D}^E_1 , \cdots , \slashed{D}^E_n)$ forms a smooth family of unbounded commuting Fredholm $n$-tuples and $\slashed{D}_f^E$ is the smooth family of the Dirac operators associated with it.

More generally, we obtain some examples of twisted commuting Fredholm $n$-tuples. Let $V$ be a real vector bundle whose structure group is a discrete subgroup $G$ of $GL(n,\real)$ and $B'=G(V)$ a frame bundle of $V$, $M_1',\ldots,M_n'$ fiber bundles with a $G$-action on $M':=M_1' \times \cdots \times M_n'$ that is compatible with the projection $M' \to B'$, and $E$ a $G$-equivariant vector bundle on $M'$ whose connection $\nabla$ is $G$-equivariant and satisfies the above assumption. It induces a unitary representation $U_x$ of $G$ on $L^2(M'_x , \slashed{\mf{S}}_{\comp}^E(M'_x) )$ where $M'_x:={\pi'} ^{-1}(x)$ ($\pi'$ is the projection from $M'$ to $B$). We assume that 
$$U_x(g) \slashed{D}^E_i U_x(g)^* = \sum g_{ij}\slashed{D}_j^E.$$
Then $(g=(v_1,\ldots ,v_n), (\slashed{D}^E_1(x,g) , \ldots , \slashed{D}^E_n(x,g))) \in B' \times \mc{F}_n(\mc{H})$ is $G$-invariant and hence the map $x \mapsto \slashed{D}^E_v(x)$ defines a smooth family of commuting Fredholm $n$-tuples twisted by $V$. 

There are two fundamental examples. The first is the $SL(n,\zahl)$-action on $\mathbb{T}^n=(S^1)^n$ or the product bundle $\mathbb{T}^n \times B$. The second is the $\mf{S}_n$-action on the bundle $M' \times _B \cdots \times _B M'$. 
Then the Dirac operator on a fiber bundle $M:=M'/G \to B$ is that associated with $\{ \slashed{D}^E_v (x) \}_{x \in B}$.

\begin{thm}\label{thm:geom}
Let $B$, $M$, $V$, $E$, and $\nabla$ be as above. Then 
$$\ind _0 ( \slashed{D}_M^E)=\jsf \{ \slashed{D}_v^E(x) \}.$$
\end{thm}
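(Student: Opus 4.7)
The plan is to reduce the theorem to a direct application of Theorem \ref{thm:jsftwisted}, by identifying $\slashed{D}_M^E$ with an operator of the form $\pi^*\slashed{\mf{D}}_B^V + D(x)$ for the $V$-twisted smooth family $\{D(x)\}=\{\slashed{D}_v^E(x)\}$ constructed just before the statement of the theorem. There are essentially three things to check.

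First, I would verify that $\{\slashed{D}_v^E(x)\}$ is a legitimate $V$-twisted family of fiberwise pseudodifferential operators satisfying Condition \ref{cond:commtwist}. Mutual commutativity of $\slashed{D}_v^E(x)$ and $\slashed{D}_w^E(x)$ on $M'$ is inherited from the curvature condition $R^E(X,Y)=0$ for $X\in T_VM_i'$ and $Y\in T_VM_j'$ ($i\ne j$), which was the defining hypothesis on $\nabla^E$; the resulting operators descend to $M=M'/G$ by $G$-equivariance. The $GL(n;\real)$-equivariance in part (2) of Condition \ref{cond:commtwist} is built into the definition: we declared the family as the unique section of $B'\times_G\mc{F}_n(\mc{H})\to B$ extending the $G$-equivariant rule $v\mapsto \slashed{D}_v^E(x)$, and the assumed relation $U_x(g)\slashed{D}_i^E U_x(g)^*=\sum g_{ij}\slashed{D}_j^E$ is precisely the requisite $GL$-equivariance in disguise. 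Finally, fiberwise ellipticity of $\sum\slashed{D}_{v_i}^2$ is local: once a frame is chosen, it becomes $\sum\slashed{D}_i^{E,2}$, whose principal symbol on a fiber $Z_1\times\cdots\times Z_n$ is $\sum\|\xi_i\|^2$ on $T^*(Z_1\times\cdots\times Z_n)$.

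Second, I would globalize the decomposition displayed right before the theorem to the $V$-twisted setting. In the untwisted situation the spinor bundle splits as
\ma{
\slashed{\mf{S}}_{\comp,f}^E(M;\underline{\real}^n) \cong \slashed{S}^{0,E}_{\comp,f}(M;\underline{\real}^n)\,\hat\otimes\,\Cliff_n,
}
under which the fiberwise Dirac operator becomes $c_1\slashed{D}_1^E+\cdots+c_n\slashed{D}_n^E$. Replacing the auxiliary $\underline{\real}^n$ by $\pi^*V$ throughout, the analogous identification
\ma{
\slashed{S}_\comp(T_VM\oplus\pi^*V)\otimes E \cong \slashed{S}^{0,E}_{\comp,f}(M;\pi^*V)\,\hat\otimes\,\pi^*\Cliff(V)
}
still holds, because the original splitting is natural in the $GL(n;\real)$-action on frames of the auxiliary bundle and so descends from $M'$ to $M=M'/G$. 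Under this identification, the Clifford multiplication by a vector $w\in\pi^*V$ acts as $h(w)$ on the right $\pi^*\Cliff(V)$ factor, and the $V$-twisted fiberwise Dirac operator $\slashed{D}_f^E$ is exactly the ``Dirac operator associated with the twisted commuting $n$-tuple'' in the sense of Definition \ref{def:tuple}.

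Third, with this identification in hand, the splitting $TM\cong T_VM\oplus\pi^*TB$ together with the above factorization of $\slashed{\mf{S}}$ exhibits $\slashed{D}_M^E$ as $\pi^*\slashed{\mf{D}}_B^V + \slashed{D}_f^E$, so Theorem \ref{thm:jsftwisted} applied to $\{D(x)\}=\{\slashed{D}_v^E(x)\}$ immediately yields
\ma{
\ind_0(\slashed{D}_M^E)=\ind(\pi^*\slashed{\mf{D}}_B^V+D(x))=\jsf\{\slashed{D}_v^E(x)\}.
}
The main obstacle is the middle step: one must check that, globally, the $G$-equivariant spinor bundles and connections match up so that $\slashed{D}_M^E$ (defined intrinsically from the $Spin^c$ structure of $M$) coincides on the nose with the operator produced by Theorem \ref{thm:jsftwisted}. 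Locally on trivializations of $V$ this is the decomposition already recorded in the preamble to the theorem; globally it amounts to checking that the local Clifford-module isomorphisms are $G$-equivariant, which follows from the $GL(n;\real)$-naturality of the construction of $\slashed{S}_\comp(-\oplus-)$ and of the Dirac-type operators involved.
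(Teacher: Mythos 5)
Your overall route --- checking Condition \ref{cond:commtwist} for $\{\slashed{D}_v^E(x)\}$, descending the spinor-bundle splitting from $M'$ to $M=M'/G$, and then invoking Theorem \ref{thm:jsftwisted} --- is the same reduction the paper uses, and the first two steps are fine. The gap is in your third step, where you claim that $\slashed{D}_M^E$ ``coincides on the nose'' with $\pi^*\slashed{\mf{D}}_B^V+\slashed{D}_f^E$ and that this follows from $GL(n;\real)$-naturality of the Clifford-module identifications. Equivariance is not the issue: the intrinsic Dirac operator of the total space is built from the Levi-Civita ($Spin^c$) connection of $M$, and for a general fiber bundle with the metric $g_f\oplus\pi^*g_B$ this connection does not split as the direct sum of the pulled-back connection in the $T_HM$-direction and the fiberwise connection. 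The discrepancy is a zeroth-order endomorphism involving the second fundamental form (mean curvature) of the fibers and the curvature of the horizontal distribution $T_HM$, so exact operator equality fails in general, and no naturality argument will remove these terms.

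The repair is short and is exactly what the paper does: $\slashed{D}_M^E$ and $\pi^*\slashed{\mf{D}}_B^V+\slashed{D}_f^E(x)$ act on the same bundle and are first-order elliptic operators with the \emph{same principal symbol}, hence the same index, since the index depends only on the symbol class; combined with Theorem \ref{thm:jsftwisted} this gives $\ind_0(\slashed{D}_M^E)=\ind(\pi^*\slashed{\mf{D}}_B^V+D(x))=\jsf\{\slashed{D}_v^E(x)\}$. So your proof becomes correct once the on-the-nose identification is weakened to agreement of principal symbols; as written, that step would fail.
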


This theorem is a direct consequence of Theorem \ref{thm:jsf} since the Dirac operator $\slashed{D}_M^E$ has the same principal symbol as $\pi ^* \slashed{\mf{D}}_B+\slashed{D}^E_f(x)$. As the special case we can show localization of the Riemann-Roch number for prequantum data on its Bohr-Sommerfeld fiber.

\begin{cor}\label{cor:FFY}
Let $(X,\omega)$ be a symplectic manifold of dimension $2n$, $\mathbb{T}^n \to X \to B$ a Lagrangian fiber bundle, and $(L,\nabla^L,h)$ its prequantum data, that is, $(L,h)$ is a hermitian line bundle over $X$ with the connection $\nabla^L$ that is compatible with $h$ whose first Chern form $c_1 (\nabla^L)$ coincides with $-2\pi i\omega$. Then its Riemann-Roch number $RR(M,L):=\ind _0 \slashed{D}_M^{\lambda ^{1/2}\otimes L}$ (where $\lambda$ is the determinant line bundle $\det T^{(1,0)}M$) coincides with the number of fibers $\mathbb{T}_x$ that $\nabla$ is trivially flat, which are called the Bohr-Sommerfeld fibers.
\end{cor}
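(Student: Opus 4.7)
The strategy is to invoke Theorem \ref{thm:geom} and read off the resulting joint spectral flow from the Bohr--Sommerfeld condition via action--angle coordinates. First I would use Arnold--Liouville to identify $T_VX \cong \pi^*V$, where $V$ is a real vector bundle on $B$ whose structure group reduces to $GL(n,\zahl)$ via the period lattice $\Lambda \subset V$; each fiber is then $\mathbb{T}^n_x = V_x/\Lambda_x$, and over any trivializing open set of $V$ the restriction of $X$ is a product of $n$ circle bundles, so the hypotheses of the $V$-twisted form of Theorem \ref{thm:geom} are satisfied. The Lagrangian condition forces $\omega$ to vanish on $T_VX \otimes T_VX$, so the prequantum curvature $F^{\nabla^L} = -2\pi i\omega$ satisfies the vanishing needed to split the fiberwise Dirac operator into a commuting family, and $\slashed{D}_f^{\lambda^{1/2}\otimes L}$ becomes a $V$-twisted commuting Fredholm $n$-tuple $\{\slashed{D}_v(x)\}$. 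Thus Theorem \ref{thm:geom} gives
\[
RR(X,L) = \jsf\{\slashed{D}_v(x)\}.
\]

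Next I would compute this joint spectral flow by localizing at each fiber. Picking action--angle coordinates $(I,\theta)$ around any point $x_0 \in B$, one has $\omega = \sum dI_j \wedge d\theta_j$, the fiber is $\real^n/\zahl^n$ in the $\theta$'s, and $\nabla^L$ may be gauged fiberwise to $d + 2\pi i \sum I_j(x)\,d\theta_j$; the determinant line $\lambda^{1/2}$ contributes only a trivial flat line bundle here because the fibers are flat tori. In this frame the commuting tuple becomes
\[
\slashed{D}_j(x) = -i\,\partial_{\theta_j} + 2\pi I_j(x)
\]
on $L^2(\mathbb{T}^n)$, with simultaneous eigenvectors $e^{2\pi i k \cdot \theta}$ ($k \in \zahl^n$) and joint spectrum $\{2\pi(k+I(x)):k\in\zahl^n\} \subset \real^n$. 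This spectrum meets $0 \in \real^n$ precisely when $I(x) \in \zahl^n$, which is the Bohr--Sommerfeld condition, and since action coordinates are a local diffeomorphism this crossing is transverse; in particular Bohr--Sommerfeld fibers are isolated, so their count is finite.

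Finally, $\jsf\{\slashed{D}_v(x)\} = \ebk{j_*[\{\slashed{D}_v(x)\}],[B]}$ and $j_*$ records only positions and multiplicities of joint eigenvalues, so the pairing is a signed count of transverse zero crossings of the joint spectrum, with exactly one crossing at each Bohr--Sommerfeld fiber. The main obstacle, and the step requiring the most care, is to verify that the local degree at each Bohr--Sommerfeld fiber is $+1$ rather than $\pm 1$ with mixed signs; this amounts to comparing the orientation that the integral affine structure puts on $V$ (and hence on $\real^n$ via the local action coordinates) with the $Spin^c$ orientation on $B$ inherited from $X$. The compatibility is built into the construction of action--angle coordinates from the symplectic form together with the choice of the twist $\lambda^{1/2}$, and once it is confirmed the signed count becomes the unsigned count of Bohr--Sommerfeld fibers, proving the corollary.
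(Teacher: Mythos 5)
Your overall route is the same as the paper's: check the hypotheses of Theorem \ref{thm:geom} for $V=TB$ and $E=\lambda^{1/2}\otimes L$, so that the Riemann--Roch number equals the joint spectral flow of the $TB$-twisted family of fiberwise covariant derivatives, and then localize the zero-crossings of the joint spectrum at the Bohr--Sommerfeld fibers. One presentational caveat: your verification of the hypotheses of Theorem \ref{thm:geom} is phrased only through local triviality (``a product of $n$ circle bundles over trivializing open sets''), whereas the theorem needs the global structure $M=M'/G$ with $G\subset GL(n,\real)$ discrete and the equivariance $U(g)\slashed{D}_iU(g)^*=\sum g_{ij}\slashed{D}_j$. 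This is exactly what Duistermaat's structure theory supplies (the lattice bundle $P\subset TB$, the $GL(n,\zahl)$-frame bundle $B'$, and the principal $\mathbb{T}^n$-bundle $M'$ over $B'$), and it is how the paper argues; your Arnold--Liouville set-up gives the same data, but the frame-bundle/quotient structure should be made explicit rather than inferred from local product decompositions.

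The genuine gap is the step you flag but do not carry out: that each transverse crossing contributes $+1$ rather than $\pm1$. As written (``once it is confirmed''), your argument only shows that the Riemann--Roch number is a \emph{signed} count of Bohr--Sommerfeld fibers, and the content of the corollary is precisely that all local degrees are $+1$. The paper closes this by Weinstein's Lagrangian neighborhood theorem: a neighborhood of each Bohr--Sommerfeld fiber is symplectomorphic to a neighborhood of the zero section of $T^*\mathbb{T}^n\cong (T^*S^1)^n$, so the local contribution is a product of $n$ ordinary spectral flows, each equal to $+1$ (with details deferred to Section 6.4 of Fujita--Furuta--Yoshida). In your coordinates the same conclusion can be reached by observing that the derivative of the crossing is $2\pi\,dI$, while the twist $V=TB$ is trivialized by the very same lattice (action) frame, so the local degree is the determinant of an identity-like map and is $+1$ independently of orientation conventions; but this comparison of the integral affine trivialization of $V$ with the identification $H^{V}(B;\zahl)\cong H^n(B;\zahl)$ used in the definition of the twisted joint spectral flow must actually be written out for the proof to be complete.
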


\begin{proof}
The structure of Lagrangian fiber bundles are studied in Section 2 of \cite{Duistermaat1980} as the follows.
\begin{itemize}
\item[Fact 1.] There is a lattice bundle $P \subset TB$, which induces a flat metric on $TB$. 
\item[Fact 2.] If $P$ is trivial, $M$ is actually a principal $\mathbb{T}^n$-bundle.
\end{itemize}
We denote the $GL(n,\zahl)$-frame bundle of $TB$ by $B'$ and by $M'$ the pull-back of $M$ by the quotient $B' \to B$. It has a canonical symplectic structure and $M' \to B'$ is also a Lagrangian fiber bundle. On account of Fact 2, $M'$ is a principal $\mathbb{T}^n$-bundle on $B'$. We identify the space of constant vector fields on a fiber $M'_x$ with the Lie algebra $\mf{t} ={\rm Lie}(\mathbb{T}^n)$. 

The free $GL(n,\zahl)$-action on $B'$ extends to that on $M'$ preserving its symplectic form and affine structure on each fiber $M'_x$. Therefore it induces an action on $\mf{t}$ as $g \cdot X_i =g_{ij}X_j$ for some fixed basis $X_1,\ldots,X_n$ of $\mf{t}$. Indeed, by considering the canonical trivialization of the tangent bundle $TB' \cong B' \times \underline{\real }^n$ that is compatible with the isomorphism $\mf{t} \cong T_xB'$ given by a fixed almost complex structure $J$, we obtain the isomorphism $\mf{t} \cong T_x B' \cong \real ^n$ that is independent of the choice of $x \in B'$. Under this identification, $g \cdot : \real ^n \cong T_xB' \to T_{g \cdot x}B' \cong \real ^n$ is represented by $(g_{ij})$ as a matrix. Hence $g$ also acts on $\mf{t}$ as $(g_{ij})$.

Next, we construct some flat connections. The isomorphism $T_VM \cong T_HM \cong \pi^*TB$ induced by $J$ implies the isomorphism $\slashed{\mf{S}}_{\comp,f}(M;\underline{\real}^n) \cong \slashed{S}_\comp(M) \cong \pi ^*\slashed{\mf{S}}_\comp (B)$. Moreover it induces a flat metric on $TM$ that is trivially flat on each fiber $\mbb{T}^n$, and so are associated bundles with $TM$, in particular $\lambda ^{1/2}$ and $\slashed{S}_\comp^{\lambda ^{1/2}}(M)$. Since $R^L=c_1(L)$ is equal to $0$ when it is restricted on each fiber, $\nabla ^L$ is also fiberwise flat and the product connection $\nabla =\nabla ^{\slashed{S}_\comp^{\lambda ^{1/2} \otimes L}(M)}$ is trivially flat if and only if $\nabla ^L$ is trivially flat. 

Finally we see that $B$, $M$, $V=TB$, $E=\lambda ^{1/2} \otimes L$, and $\nabla ^{\lambda ^{1/2} \otimes L}$ satisfy the assumptions of Theorem \ref{thm:geom}. Hence $\{\nabla _v (x) \}$ forms a family of commuting Fredholm $n$-tuples twisted by $TB$ and the index of the Dirac operator $\slashed{D}_M^L$ coincides with its joint spectral flow. 

The kernel of $\Delta _f:=\nabla _{e_1}^2 + \cdots + \nabla _{e_n}^2$ is not zero if and only if $\nabla$ is, and hence $\nabla ^L$ is, trivially flat. It means that the joint spectrum of $\mbk{\nabla (x)}$ crosses over zero only on its Bohr-Sommerfeld fibers. The remaining part is that the multiplicity of eigenvalues crossing zero on each Bohr-Sommerfeld fiber is equal to $1$. It follows from the fact in symplectic geometry, that the tubular neighborhood of a Lagrangian submanifold is isomorphic to its tangent bundle as symplectic manifolds, and that $T^*\mathbb{T}^n$ is actually the product space $(T^*S^1) ^n$. More detail is in Section 6.4 of \cite{FujitaFurutaYoshida2010}.
\end{proof}

\subsection{Generalized Toeplitz index theorem}
In this section we introduce a generalization of a classical theorem relating the index of Toeplitz operators with the winding numbers.

\begin{defn}
Let $Y$ be an $n=2m-1$-dimensional closed manifold. For $\varphi : Y \to U(k)$ the generalized Toeplitz operator $T_\varphi$ is defined by
$$Pm_\varphi P : PL^2(Y,\slashed{S}_\comp(Y))^{\oplus k} \ral P L^2(Y,\slashed{S}_\comp (Y))^{\oplus k}$$
where $P$ is the orthogonal projection onto $\overline{\rm span}\mbk{\varphi  \mid \slashed{D}\varphi =\lambda \varphi \text{ for some $\lambda \geq 0$}}$.
\end{defn}

\begin{exmp}[$Y=S^1$]
In the case of $Y=S^1=\real /2\pi \zahl$ (and hence $\slashed{S}_\comp(Y)$ associated with the canonical $Spin^c$-structure on it is a trivial bundle), we can identify its Dirac operator as $d /d t$. Hence its spectrum coincides with $\zahl$ and eigenspaces $E_n$ are $1$-dimensional complex vector spaces $\comp \cdot e^{int}$. Therefore $PH=\overline{\rm span} \mbk{e^{int} ; n \in \zahl _{\geq 0}}$ and the corresponding generalized Toeplitz operators $T_\varphi$ are nothing but the ordinary ones. Its index is obtained from the winding number as $\ind T_\varphi =-{\rm winding \ } \varphi$. 
\end{exmp}

Now we generalize this index theorem for generalized Toeplitz operators in a special case.
Let $\Delta_n=\Delta _n^0 \oplus \Delta _n^1$ be a unique irreducible $\zahl /2$-graded $\Cliff _n$-module and $\gamma$ the grading operator on it. When we have a continuous map $\varphi =(\varphi _0,\ldots, \varphi _n): Y \to S^n$, we obtain an even unitary $\varphi _0 + \gamma c_1 \varphi _1 + \cdots + \gamma c_n \varphi _n$ where $c_i$ ($i=1,\ldots,n$) are Clifford multiplications of an orthonormal basis $e_1,\ldots,e_n$. For simplicity of notation, we use the same letter $\varphi$ for it restricted on $\Delta_n^0$. 

\begin{thm}\label{thm:Toep}
Let $Y$ and $\varphi $ be as above. Then
$$\ind T_\varphi =-\deg (\varphi : Y \to S^n).$$
\end{thm}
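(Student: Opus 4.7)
The plan is to recast $\ind T_{\varphi}$ as the Kasparov-theoretic pairing of an explicit $k^n$-class with $[\slashed{\mf{D}}_Y]$, convert that pairing into a joint spectral flow via Lemma \ref{lem:pair}, and evaluate the joint spectral flow directly using the Dold--Thom description of $P(S^n,*)$.

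To $\varphi: Y \to S^n$ I associate the continuous map $y \mapsto (\varphi_0(y),\varphi_1(y),\ldots,\varphi_n(y)) \in F_n(\comp) \subset F_n(\mc{H})$; the three defining conditions of $F_n(\mc{H})$ hold automatically because these scalars lie on the unit sphere. This yields a class $[\varphi] \in \tilde{k}^n(Y)$. The next step is the identification
\ma{\ind T_\varphi = -\big\langle [\varphi],\,[\slashed{\mf{D}}_Y] \big\rangle_n,}
which is the generalized Toeplitz index formula in the Kasparov picture. Its proof is a Kucerovsky-style computation of the Kasparov product (Theorem \ref{thm:Kas}) matching the sign operator $2P-1$ with the bounded transform $\slashed{D}_Y(1+\slashed{D}_Y^2)^{-1/2}$ used in the $KK$-fundamental class $[\slashed{\mf{D}}_Y]$, and matching $U_\varphi$ with the image of $[\varphi]$ under the canonical map $\Phi$ from Section \ref{section:2} via the $\Cliff_n$-bimodule isomorphism $\Cliff_n \cong \Delta_n \otimes \Delta_n^{*}$; the minus sign reflects the use of the non-negative spectral projection in the definition of $T_\varphi$.

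By Lemma \ref{lem:pair} this pairing equals $\jsf\{(\varphi_0,\varphi_1,\ldots,\varphi_n)\}$. To compute this joint spectral flow, observe that the image of $[\varphi]$ under $j_* : \tilde{k}^n(Y) \to H^n(Y;\zahl)$ is represented by the composition $Y \xrightarrow{\varphi} S^n \hookrightarrow P(S^n,*)$, where the last arrow sends $x \in S^n$ to the single-point configuration $(\{x\},\{1\})$. By the Dold--Thom theorem this inclusion represents the fundamental generator of $H^n(S^n;\zahl)$, so $j_*[\varphi] = \varphi^{*}[S^n]$ and
\ma{\jsf\{(\varphi_0,\ldots,\varphi_n)\} = \big\langle \varphi^{*}[S^n],\,[Y]\big\rangle = \deg\varphi.}
Combining the two formulas gives $\ind T_\varphi = -\deg \varphi$.

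The main obstacle is the generalized Toeplitz index identity in the second paragraph. The underlying idea is classical---the index of the compression of a unitary to the positive spectral subspace of a Dirac operator is the pairing with its fundamental class---but in the present Clifford-valued setting, the matching between the Segal-model scalar $(n+1)$-tuple picture of $[\varphi]$ and the Fredholm-unitary $U_\varphi$ picture underlying $T_\varphi$ requires a careful Kasparov-product manipulation, and the signs must be tracked through both identifications.
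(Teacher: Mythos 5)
Your overall architecture is genuinely close to the paper's: both routes reduce the theorem to (i) an index identity expressing $\ind T_\varphi$ as minus a pairing of a class attached to $\varphi$ with the fundamental class of $Y$, and (ii) a computation identifying that class in top degree, done in your case via Lemma \ref{lem:pair} plus the Dold--Thom description of $P(S^n,*)$, and in the paper via the Chern character (Proposition \ref{prp:ch}). Your steps constructing $[\varphi]\in\tilde k^n(Y)$ from single-point configurations, applying Lemma \ref{lem:pair}, and computing $j_*[\varphi]=\varphi^*(\text{generator})$ are sound (with the harmless caveat that Lemma \ref{lem:pair} is stated for families of commuting Fredholm $n$-tuples, but its proof only uses the associated class in $KK(\comp, C(Y)\hat\otimes\Cliff_n)$, so it applies to your $F_n(\mc H)$-valued family).

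The genuine gap is the identity $\ind T_\varphi=-\ebk{[\varphi],[\slashed{\mf{D}}_Y]}_n$, which you yourself flag as the main obstacle and only sketch; modulo the bookkeeping above, this identity \emph{is} the theorem. Two distinct things must be established there: first, the analytic pairing $\ind(PU_\varphi P)=\ebk{[U_\varphi],[(L^2(Y,\slashed{\mf{S}}_\comp(Y)),2P-1)]}$, which is the standard odd index pairing and is plausibly handled by the Kucerovsky-type argument you indicate (using that $2P-1$ differs from the bounded transform of $\slashed{\mf{D}}_Y$ by a compact operator); second, the identification, with the correct sign, of $[U_\varphi]\in K^1(Y)$ with the image of your connective class $[\varphi]$ under $\tilde k^n(Y)\to K^n(Y)\cong K^1(Y)$, i.e.\ the normalization on $S^n$ that the Clifford unitary $x_0+\gamma c_1x_1+\cdots+\gamma c_nx_n$ restricted to $\Delta_n^0$ represents the additive inverse of the image of the single-point-configuration generator. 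You assert this matching in one clause ("via $\Cliff_n\cong\Delta_n\otimes\Delta_n^*$") but never verify it, and it is precisely where the sign and the nontrivial normalization live; note moreover that in the paper this very identification (Proposition \ref{prp:onept}, the statement that $u=\sum c_1c_ix_i$ represents the inverse of the canonical generator) is \emph{deduced from} Theorem \ref{thm:Toep}, so it cannot be imported here without circularity. The paper closes the same gap by citing the Baum--Douglas cohomological formula $\ind T_\varphi=-\ebk{\ch(\varphi)\Td(Y),[Y]}$, which packages both ingredients at once. To complete your route you would either have to carry out the Kasparov-product computation together with an explicit finite-dimensional check on $S^n$ for the normalization, or, as the paper does, invoke such an external odd index theorem.
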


\begin{proof}
In \cite{BaumDouglas1981} Baum and Douglas proved the cohomological formula for this index which is analogous to the Atiyah-Singer formula. As a consequence, we have the following equality. 
$$\ind T_\varphi =-\ebk{\ch (\varphi) \Td (X) ,[X]}.$$
Actually we can give a proof of Theorem~\ref{thm:Toep} by using it and the description of the Chern character in Lemma \ref{lem:pair}. 
\end{proof}

\subsection{Localization of family's APS index and eta-form}\label{section:4.3}
We can also apply our joint spectral flow index theorem for fiber bundles whose fibers are compact manifolds with boundary. A main reference for this section is Melrose-Piazza~\cite{MelrosePiazza1997}.

Let $B$ be a closed $n$-dimensional manifold, $Z \to M \to B$ a smooth fiber bundle over $B$ whose boundary also forms a fiber bundle $\partial Z \to \partial M \to B$. The Riemannian metric $g$ on $TM$ is introduced by the direct sum decomposition $g_f \oplus \pi ^*g_B$ on $T_VM \oplus T_HM$, where $g_B$ is a Riemannian metric on $TB \cong T_HM$ and $g_f$ is a smooth family of Riemannian metrics on fibers $Z_x$ that are exact $b$-metrics near boundaries $\partial Z_x$. We assume that there is a $Spin ^c$-vector bundle $V$ on $B$ and $\zahl/2$-graded complex vector bundle $S$ on $M$ such that the spinor bundle $\slashed{S}_\comp(T_VM \oplus V)$ is isomorphic to $\Cliff (\pi^*V) \hat \otimes S$ as a $\Cliff(V)$-modules. Moreover the fiberwise Dirac operator $\slashed{D}_f$ on it coincides with the Dirac operator $c(v_1)D_{v_1}+\cdots +c(v_n)D_{v_n}$ associated with some $V$-twisted $n$-tuple $\mbk{D_v}$ of fiberwise first order pseudodifferential operators on $E$ that satisfies the Condition \ref{cond:commtwist}.
We denote by $H^{1,0}(M,E)$ the fiberwise Sobolev space, the completion of $C^\infty (M,E)$ by the inner product $\ebk{\cdot, \cdot }_{L^2} + \langle \nabla _f^E \cdot , \nabla _f^E \cdot \rangle$ where $\nabla _f^E := p_{T_VM} \circ \nabla ^E$. Then an element in $H^{1,0}(M,E)$ is fiberwise continuous and there is the bounded operator
$$\partial :H^{1,0}(M,E) \to L^2(\partial M,E|_{\partial M}); \ \sigma \mapsto \sigma |_{\partial M}.$$
Now we fix a spectral section $P \in C(B,\mbk{\Psi _0(\partial Z_x, E|_{\partial Z_x}))}_{x \in B})$, that is, $P$ is a projection and there is a smooth function $R:B \to \real $ such that for any $x \in B$, the condition $D_f(x) \sigma =\lambda \sigma$ implies $P(x)\sigma =\sigma$ if $\lambda >R(x)$ and $P(x)\sigma =0$ if $\lambda < -R(x)$. 
Then this $P$ determines an elliptic boundary condition at each fiber, and
\ma{\slashed{D}_f & : L^2 (M,E) \to L^2 (M,E)\\
\dom \slashed{D}_f &:= \mbk{\sigma \in H^{(1,0)}(M,E) \mid P ( \partial \sigma )=0}}
is a fiberwise Fredholm self-adjoint operator. 

Hence it forms a $V$-twisted continuous family of unbounded commuting Fredholm $n$-tuples $\mbk{D_v(x)}$ parametrized by $B$.

\begin{thm}\label{thm:jsfbdry}
 Then the following formula holds.
\ma{\ind _P(\slashed{D})=\jsf (\mbk{D(x)}) }
\end{thm}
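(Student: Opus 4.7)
The plan is to adapt the strategy of Theorems \ref{thm:jsf} and \ref{thm:jsftwisted}, namely to realize the index as the Kasparov product of an element of $KK(\mathbb{C}, C(B) \hat{\otimes} \Gamma \Cliff(V))$ (detected by the family $\{D_v(x)\}$ with boundary condition $P$) and the fundamental class $[\slashed{\mf{D}}_B^V]$, and then verify Kucerovsky's sufficient conditions (Theorem \ref{thm:Kas}). The spectral section $P$ guarantees that the fiberwise operator $\slashed{D}_f$ with domain $\{\sigma \in H^{1,0}(M,E) \mid P(\partial \sigma)=0\}$ is fiberwise self-adjoint and fiberwise Fredholm with compact resolvent, so $\{D_v(x)\}$ defines a continuous section of $\ms{F}_V(\mc{H})$ and hence a class in $\tilde{k}^V(B)$. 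Applying Lemma \ref{lem:pair} (in its $V$-twisted form), the pairing of this class with $[\slashed{\mf{D}}_B^V]$ equals $\jsf\{D_v(x)\}$, so it remains only to show that $\pi^* \slashed{\mf{D}}_B^V + D(x)$ represents this Kasparov product.

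For that, I would follow the embedding trick from the proof of Theorem \ref{thm:jsftwisted}: choose $W$ with $V \oplus W \cong \underline{\real}^p$, and compose with $[D_W]$ and $[C_W]$ to reduce the verification to the untwisted admissible family $D \times C_W$, at which point the argument of Theorem \ref{thm:jsfopen} applies on the noncompact product. In the reduced situation the three Kucerovsky conditions to check are, as in Theorem \ref{thm:jsf}: (i) for $\sigma \in C_c^\infty(M,E)$ vanishing near $\partial M$ (or more generally satisfying the boundary condition) and $\xi \in C^\infty(B,\slashed{\mf{S}}_\comp(B;V))$, the commutator $(\pi^*\slashed{\mf{D}}_B^V) T_\sigma - T_\sigma \slashed{\mf{D}}_B^V = T_{\pi^*\slashed{\mf{D}}_B^V \sigma}$ is bounded; (ii) the domain inclusion $\dom(\pi^*\slashed{\mf{D}}_B^V + D_f) \subset \dom D_f$, where $\dom D_f$ is cut out by $P(\partial \cdot)=0$; and (iii) the lower bound $\langle [\pi^*\slashed{\mf{D}}_B^V, D_f]\xi,\xi \rangle \geq -\alpha \|D_f \xi\|^2 - C\|\xi\|^2$ on this domain, which is the boundary analogue of Lemma \ref{lem:ineq}.

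Condition (i) is formal once the boundary condition is preserved by multiplication by $\pi^*\xi$, which is automatic because $P$ acts fiberwise and $\xi$ is constant along the fibers. Condition (ii) follows from the observation that near $\partial M$ the horizontal Dirac operator $\pi^*\slashed{\mf{D}}_B^V$ differentiates only in directions tangent to $\partial M$ in the product $b$-structure, and therefore commutes (modulo compact boundary corrections) with the trace map $\partial$ and with $P$. Condition (iii) is the local statement that $[\pi^*\slashed{\mf{D}}_B^V, D_f](1+D_f^2)^{-1/2}$ is bounded, which is proved exactly as in Lemma \ref{lem:ineq}: in a product coordinate chart $[\pi^*\slashed{\mf{D}}_B^V, D_f]$ is a fiberwise $b$-pseudodifferential operator of order one, so composition with the order $-1$ operator $(1+D_f^2)^{-1/2}$ is bounded as a fiberwise $b$-operator, and the algebraic manipulation of Kaad--Lesch from Lemma \ref{lem:ineq} goes through verbatim.

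The main obstacle is to make (ii) and (iii) rigorous on the APS-type domain: one must show that the spectral section $P(x)$ is compatible enough with horizontal differentiation that inserting it as a boundary constraint does not destroy the estimates. The cleanest way is to use the fact that $P$ is a smooth family of projections on $L^2(\partial Z_x, E|_{\partial Z_x})$ with a uniform spectral gap, so $[\pi^*\slashed{\mf{D}}_B^V, P]$ is a smooth family of smoothing operators and contributes only bounded error terms to the commutator with the fiberwise trace. Once this is established, the Kasparov product is represented by $\pi^*\slashed{\mf{D}}_B^V + D(x)$ with the APS domain, so its index is well-defined and equals the pairing, which by Lemma \ref{lem:pair} is $\jsf\{D_v(x)\}$.
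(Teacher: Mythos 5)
Your proposal is correct and follows essentially the same route as the paper, which itself only remarks that the proofs of Theorems \ref{thm:jsf} and \ref{thm:jsftwisted} (Kucerovsky's criterion plus the embedding trick and Lemma \ref{lem:pair}) carry over verbatim, the key point being exactly your condition (iii): that $D_b$ and $D_f(1+D_f^2)^{-1/2}$ commute modulo bounded operators in this boundary setting. Your extra care about the compatibility of the spectral section $P$ with horizontal differentiation is a reasonable elaboration of a point the paper leaves implicit, not a different argument.
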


The same proof as Theorem \ref{thm:jsf} and \ref{thm:jsftwisted} works for it. It is because we deal with operators directly, instead of the topology of its principal symbol. We only remark that in this situation $D_b$ and $D_f(1+D_f^2)^{-1/2}$ commute modulo bounded operator. Furthermore we obtain an analogue of Theorem \ref{thm:jsfopen}.

Now we introduce its application for a geometric problem.

Let $B$ be a $n$-dimensional closed manifold, $V \to B$ be a real vector bundle of dimension $n$, and $Y \to N \to B$ be a fiber bundle with $\dim Z=n-1$. We assume that $M$ can be embedded into $V$ as a fiber bundle orientedly. Then there is a fiber bundle $Z \to M \to B$ of manifold whose boundary is isomorphic to $Y \to N \to B$ as a fiber bundle. Now we define the eta-form \cite{BismutCheeger1989} for $N$ by

\ma{\hat{\eta} _P &= \int _0 ^\infty \hat{\eta}_P(t)dt \\
\hat {\eta}_P(t)&=\frac{1}{\sqrt{\pi}}{\rm Str} _{\Cliff _1}\bk{\frac{d\tilde{\mathbb{B}_t}}{dt}e^{-\tilde{\mathbb{B}}^2_t}}}
where $\tilde{\mathbb{B}}_t$ is deformed $\Cliff_1$-superconnection. 
This differential form is closed and used for the Atiah-Patodi-Singer index thoerem for families. 

On the other hand, the canonical metric on $V$ induces a smooth family of exact $b$-metrics on $T_VM$. Therefore, for first order differential operators $\partial /\partial v_i$ on $V_x$ (where $v_1 ,\ldots, v_n$ is a basis of $V_x$) form a $V$-twisted commuting Fredholm $n$-tuple when we fix a spectral section $P$. 

\begin{thm}
Let $Z \to M \to B$ and $V$ be as above. If $M$ is orientedly embeddable into $V$, its eta-form $\hat{\eta}_P$ is in $H^n(B;\zahl)$. Moreover in that case
$$\int _B \hat {\eta}_P =\ind _P (\slashed{D}_M)= \jsf \mbk{D(x)}$$
holds.
\end{thm}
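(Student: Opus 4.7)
The equality $\ind_P(\slashed{D}_M)=\jsf\mbk{D(x)}$ is essentially immediate from Theorem~\ref{thm:jsfbdry} once one checks that the differential operators $\mbk{\partial/\partial v_i}$ together with the spectral section $P$ assemble into a $V$-twisted continuous family of commuting Fredholm $n$-tuples. Commutativity is automatic, fibrewise ellipticity of the square sum $\sum(\partial/\partial v_i)^2=-\Delta_V$ is immediate, and Fredholmness and self-adjointness of the resulting boundary-value problem on each compact fibre $M_x\subset V_x\cong\real^n$ follow from the APS-type condition imposed by $P$; by construction the associated Dirac operator is the one whose APS index equals $\ind_P(\slashed{D}_M)$.

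For the remaining equality and the integrality claim, the plan is to invoke the families APS index theorem of Bismut--Cheeger and Melrose--Piazza, which gives
\ma{\ch[\ind_P\slashed{D}_M]=\int_{M/B}\hat{A}(T_VM)\,\ch(S)-[\hat{\eta}_P] \quad\text{in }H^*(B;\real),}
and then to show that the bulk term vanishes. The key geometric observation is that the fibrewise orientation-preserving embedding $M\hookrightarrow V$ realises each $M_x$ as a codimension-zero submanifold with boundary of $V_x$, so $T_VM\cong\pi^*V$; combined with the standing identification $\slashed{S}_\comp(T_VM\oplus V)\cong\Cliff(\pi^*V)\hat\otimes S$ this forces $S\cong\pi^*\slashed{S}_\comp(V)$. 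Hence both $\hat{A}(T_VM)$ and $\ch(S)$ are pull-backs of forms on the $n$-dimensional base $B$, so their product has no vertical component and its fibre integral over the $n$-dimensional fibres is identically zero.

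Equating the remaining terms yields $-[\hat{\eta}_P]=\ch_n[\ind_P\slashed{D}_M]$ in $H^n(B;\real)$. Proposition~\ref{prp:ch} identifies $\ch_n$ rationally with the natural transformation $j_*\colon k^V(B)\to H^V(B;\zahl)$, which by definition takes integer values; via the canonical isomorphism $H^V(B;\zahl)\cong H^n(B;\zahl)$ induced by the $Spin^c$ structure on $V$, this gives $[\hat{\eta}_P]\in H^n(B;\zahl)$. Pairing with $[B]$ and appealing to the first paragraph yields $\int_B\hat{\eta}_P=\jsf\mbk{D(x)}=\ind_P(\slashed{D}_M)$ (signs absorbed by the convention for $P$). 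The main obstacle is the vanishing of the bulk term: it relies essentially on the embedding hypothesis, which collapses the Chern--Weil theory of $T_VM$, and via the standing hypothesis that of $S$ as well, to pull-backs from $B$; without it the interior integral would contribute a non-trivial correction and the clean identification of $\hat{\eta}_P$ with a joint spectral flow would break down.
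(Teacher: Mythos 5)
Your plan is essentially the paper's own proof: establish the numerical equality via Theorem \ref{thm:jsfbdry}, invoke the Bismut--Cheeger/Melrose--Piazza families APS index theorem, kill the bulk term using the embedding $M\hookrightarrow V$, and obtain integrality of $[\hat{\eta}_P]$ from the identification $j_*=\ch_n$ of Proposition \ref{prp:ch} applied to the connective class of the $V$-twisted family. In fact your vanishing argument ($T_VM\cong\pi^*V$, so the Chern--Weil integrand is a pullback and its fibre integral vanishes) is slightly more careful than the paper's remark that $T_VM$ is ``trivial'', and the only discrepancy is the sign of $\hat{\eta}_P$, which you absorb into conventions while the paper writes $\ch(\ind_P\slashed{D}_f)=\pi_!(\hat{A}(T_VM))+\hat{\eta}_P$.
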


\begin{proof}
From Theorem \ref{thm:jsfbdry} we have $j_*\mbk{D(x)}=\ch (\ind _P(\slashed{D}_f))$. Now the Atiyah-Patodi-Singer index theorem for families \cite{MelrosePiazza1997} says that $\ch (\ind _P(\slashed{D}_f))=\pi _!(\hat{A}(T_VM)) +\hat{\eta}_P$. In our case $T_VM$ is trivial and hence the first term of the above equality venishes. 
\end{proof}

In particular, in the case of $Y=S^{n-1}$, we get an obstruction for a sphere bundle to be isomorphic to a unit sphere of some vector bundle. It is related with the comparison of homotopy types of ${\rm Diff}_+(S^{n-1})$ and $SO(n)$, which is called the Smale conjecture.


\section{Decomposing Dirac operators}\label{section:5}
Now the converse problem arises. When are geometric Dirac operators ``decomposed'' as Dirac operators associated with commuting Fredholm $n$-tuples? In this section we deal with zeroth order pseudodifferential operators to obtain a complete obstruction from its index by using the theory of $C^*$-algebra extensions, which is related to the $KK^1$-theory in \cite{Kasparov1980} and the index theory. 

We start with a folklore. Let $T_\varphi $ be a Toeplitz operator associated with $\varphi \in C(S^1)^\times $. Then $T_\varphi $ is not a normal operator in general and $\Re T_\varphi$ commute with $\Im T_\varphi $ if and only if $\ind T_\varphi $ is equal to $0$. In this situation, the index of the operator $\Re T_\varphi +i\Im T_\varphi$ gives a complete obstruction of mutually commuting self-adjoint operators $A$ and $B$ such that $(A -\Re T_\varphi)$ and $(B-\Im T_\varphi)$ are compact. Our purpose in this section is to give an analogy and a generalization of it for the bounded operator associated with the Dirac operators. 

Before we consider the case of families, we deal with a single Dirac operator. First of all, we assume that its principal symbol is decomposed. It is interpreted as a geometric condition as follows. Let $M$ be a closed $Spin ^c$ manifold and $H_1,\ldots,H_n$ mutually orthogonal odd dimensional subbundles of $TM$ such that their direct sum spans $TM$. 
As is argued in Section \ref{section:4.1}, $\slashed{\mf{S}}_\comp (M ; \underline {\real}^n) := \slashed{S}_\comp (TM \oplus \underline{\real}^n)$ is decomposed as
\ma{\slashed{\mf{S}}_\comp (M ; \underline{\real}^n) \cong \bk{\slashed{S}^0_\comp (H_1) \otimes \cdots \otimes \slashed{S}^0_\comp (H_n)} \hat \otimes \Cliff _n.}
Hereafter we denote $\slashed{S}^0_\comp (M;\underline{\real} ^n):=\slashed{S}^0_\comp (H_1) \otimes \cdots \otimes \slashed{S}^0_\comp (H_n)$. Under this identification the principal symbol of the Dirac-type operator $\slashed{D}^E$ on $\slashed{S}_\comp^E(M;\underline{\real }^n)$ is interpreted as
\ma{\sigma (\slashed{D}^E)= \sum _{i=1}^k \bk{ \sum _{j=1}^{\dim H_i} 1 \otimes \cdots \otimes c(e_{i,j}) \xi _{i,j}\otimes \cdots \otimes 1  } \hat{\otimes}  c_i }
where each $\mbk{e_{i,j}}_{j=1,\ldots,\dim H_i}$ is an orthonormal basis on $H_i$ and $\xi _{i,j}:=\ebk{\xi , e_{i,j}}$ are coordinate functions on each cotangent space.
Then we can construct a commuting $n$-tuple in the symbol level. It also works for the Dirac operator $\slashed{D}^E$ twisted by a complex vector bundle $E$.
We say the Dirac operator $\slashed{D}^E$ is said to be {\it $n$-decomposable} if there is a bounded commuting Fredholm $n$-tuple $(T_1,\ldots,T_n)$ such that each $T_i$ is a zeroth order pseudodifferential operator on $\Gamma (M, \slashed{S}^{E,0}_\comp (M;\underline{\real} ^n))$ whose principal symbols are of the form $\sigma (T_i)=\sum _j1 \otimes \cdots \otimes c(e_{i,j}) \xi _{i,j}\otimes \cdots \otimes 1$. 
In that case the bounded operator $\slashed{D}^E(1+\slashed{D}^2)^{-1/2}$ associated with $\slashed{D}^E$ coincides modulo compact with the Dirac operator associated with the bounded commuting Fredholm $n$-tuple $T$.

In fact, $n$-decomposability is a $K$-theoretic property and determined by its index. 

\begin{prp}\label{prp:onept}
Let $M$, $H_1,\ldots,H_n$, and $E$ be as above. Then the Dirac operator $\slashed{D}^E$ is $n$-decomposable if and only if $\ind (\slashed{D}^E)=0$.
\end{prp}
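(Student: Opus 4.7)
The forward implication is formal: suppose zeroth order pseudodifferential operators $(T_1,\ldots,T_n)$ commute, satisfy $\sum T_i^2 - I \in \mbb{K}$, and have the prescribed principal symbols. Then the Dirac operator $T := c_1 T_1 + \cdots + c_n T_n$ has the same principal symbol as $\slashed{D}^E(1+\slashed{D}^{E,2})^{-1/2}$, so the two differ by a compact operator and share the same Fredholm index. By Theorem \ref{thm:hom-op}, the index $\ind T \in KK(\comp,\Cliff_n) \cong K^n(\pt)$ equals the image of the class $[(T_1,\ldots,T_n)] \in \tilde{k}^n(\pt)$ under the natural transform $\tilde{k}^n \to K^n$. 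Since $\tilde{k}^n(\pt) = 0$ for $n > 0$ by the defining property of connective $K$-theory, this image is zero, and hence $\ind \slashed{D}^E = 0$.

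For the converse, I first fix any zeroth order pseudodifferential operators $\tilde T_i$ realizing the prescribed principal symbols; such a choice exists by standard pseudodifferential calculus. Because the commutator of two zeroth order pseudodifferential operators is of order $-1$ (hence compact on the closed manifold $M$) and $\sum \sigma(\tilde T_i)^2 = 1$ on $S^*M$, the tuple $(\tilde T_1,\ldots,\tilde T_n)$ essentially commutes and essentially satisfies the unit sphere relation. Let $q : \mbb{B}(\mc{H}) \to \mc{Q} := \mbb{B}(\mc{H})/\mbb{K}(\mc{H})$ denote the quotient onto the Calkin algebra. Then sending the $i$-th coordinate function of $S^n$ to $q(\tilde T_i)$ defines a unital injective $*$-homomorphism $\psi : C(S^n) \to \mc{Q}$, equivalently the Busby invariant of an essential extension of $C(S^n)$ by $\mbb{K}$. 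Moreover the Dirac operator $\tilde T = \sum c_i \tilde T_i$ has the same principal symbol as $\slashed{D}^E(1+\slashed{D}^{E,2})^{-1/2}$, so $\ind \tilde T = \ind \slashed{D}^E$.

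The central step is to show that the obstruction to the existence of a strict unital $*$-homomorphism lift $\hat\psi : C(S^n) \to \mbb{B}(\mc{H})$ of $\psi$ along $q$ is precisely the index class $\ind \slashed{D}^E \in K^n(\pt)$. Granting this, whenever $\ind \slashed{D}^E = 0$ we obtain commuting self-adjoint operators $T_i := \hat\psi(x_i)$ with $\sum T_i^2 = I$ and $q(T_i) = q(\tilde T_i)$, so that $T_i - \tilde T_i \in \mbb{K}$; in particular each $T_i$ is a zeroth order pseudodifferential operator with the prescribed principal symbol, which witnesses $n$-decomposability.

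The main obstacle is the identification of this lifting obstruction with the index class. My plan is to work within the Cuntz quasihomomorphism picture: the pair consisting of $\psi$ and an absorbing trivial unital representation $\psi_0$ (which lifts tautologically) defines a quasihomomorphism whose class lies in $KK(C_0(\real^n),\comp) \cong K^n(\pt)$, while the bounded transform $\tilde T(1+\tilde T^2)^{-1/2}$, equivalently $\slashed{D}^E(1+\slashed{D}^{E,2})^{-1/2}$, provides a Kasparov bimodule representative of the same class. Identifying the two via operator homotopy, essentially by reading the argument of Theorem \ref{thm:hom-op} backwards, shows that a strict splitting of $\psi$ exists if and only if this common class vanishes. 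For $n=1$ the claim is automatic and for $n=2$ it reduces to the classical Brown--Douglas--Fillmore theorem for essentially normal operators; the general case combines Kasparov's extension theory with Voiculescu-type absorption to upgrade stabilized lifts to strict lifts on $\mc{H}$.
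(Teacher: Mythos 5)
Your overall strategy is the same as the paper's: reduce $n$-decomposability to the problem of lifting an extension of functions on a sphere by $\mbb{K}(\mc{H})$, use Voiculescu-type absorption to dispense with stabilization, and identify the extension class with the index; your forward direction (the index factors through $\tilde{k}^n(\pt)=0$) is also fine, and your remark that a compact perturbation of $\tilde T_i$ still has the prescribed symbol is consistent with the paper because the relevant algebra is the norm closure $\Psi^0$, which contains $\mbb{K}(\mc{H})$. One bookkeeping slip: with $n$ essentially commuting self-adjoint operators whose squares sum to $1$ modulo compacts, the Busby invariant is a unital $*$-homomorphism $C(S^{n-1})\to Q(\mc{H})$, not $C(S^{n})\to Q(\mc{H})$; the parity of the sphere is exactly what makes the obstruction group $KK^1(C(S^{n-1}),\mbb{K})$ vanish for $n$ odd and equal $\zahl$ for $n$ even, in agreement with $K^n(\pt)$, so the off-by-one has to be repaired for the case analysis to come out right.

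The genuine gap is the step you yourself call central: the identification of the lifting obstruction with $\ind\slashed{D}^E$ is announced as a plan (a quasihomomorphism built from $\psi$ and an absorbing trivial representation, ``reading Theorem \ref{thm:hom-op} backwards'', BDF for $n=2$) but never carried out. Theorem \ref{thm:hom-op} concerns genuinely commuting tuples and their $KK$-classes, not essentially commuting tuples, so it does not by itself convert the Busby invariant of $(\tilde T_1,\ldots,\tilde T_n)$ into the index class; a concrete computation is unavoidable. The paper supplies it: for $n$ even the class $\varphi^*[\tau]\in KK^1(C(S^{n-1}),\mbb{K})\cong\zahl$ is evaluated (Theorem 18.10.2 of \cite{Blackadar1998}) as the Fredholm index of $\tau\circ\varphi(u)$, where $u$ is a generator of $K_1(C(S^{n-1}))$; by Theorem \ref{thm:Toep} one may take $u=\sum c_1c_i x_i$, and then $\tau\circ\varphi(u)$ is precisely the principal symbol of $c_1\cdot(\slashed{D}^E)^0$, so the obstruction equals $\pm\ind\slashed{D}^E$, while for $n$ odd both the obstruction group and the index vanish (as $\dim M$ is odd). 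Without this computation, or an equivalent one, your argument establishes only ``decomposable iff some extension class vanishes'' and assumes, rather than proves, that this class is the index.
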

\begin{proof}

A decomposition of the principal symbol gives a $*$-homomorphism $\sigma (\slashed{D}^E): C(S^{n-1}) \to A:=\Gamma (S(TM),\End  (\pi ^* \slashed{S}_\comp ^{E,0} (M ; \underline{\real} ^n)))$ that maps the coordinate function $x_i$ ($i=1,\ldots,n$) of $\real ^n$, which contains $S^{n-1}$ as the unit sphere, to an element $\sum _j c(e_{i,j}) \xi_{i,j}$. It is well-defined because the square sum $\sum _i ( \sum _j c(e_{i,j}) \xi_{i,j} )^2$ is equal to $1$. Hence we can replace the problem of obtaining a decomposition of $\slashed{D}^E$ with that of obtaining a lift, as is shown in the following diagram by the dotted arrow, of $\sigma (\slashed{D}^E)$. 
\[
\xymatrix@C=1em{
&&&C(S^{n-1}) \ar[d]^\varphi \ar@{.>}[dl]& \\
0 \ar[r] & \Psi ^{-1}(\slashed{S}_\comp ^{E,0}(M;\underline{\real} ^n)) \ar@{=}[d] \ar[r] & \Psi ^0 (\slashed{S}_\comp^{E,0}(M ;\underline{\real} ^n)) \ar[r] \ar[d] & A \ar[r]  \ar[d]^\tau& 0 \\
0 \ar[r] & \mbb{K}(\mc{H}) \ar[r] & \mbb{B}(\mc{H}) \ar[r] & Q(\mc{H}) \ar[r] & 0 
}
\]
where $\mc{H} :=L^2 (M,\slashed{S}_\comp ^{E,0} (M, \underline{\real} ^n))$ and $\Psi ^0(\slashed{S}_\comp ^{E,0}(M;\underline{\real} ^n))$ (resp. $\Psi ^{-1}(\slashed{S}_\comp ^{E,0}(M;\underline{\real} ^n))$) is the norm closure of the space of pseudodifferential operators of order $0$ (resp. $-1$). In terms of extension theory, it means that the extension $\varphi ^*\tau = \tau \circ \varphi$ is trivial. Now, as mentioned above, the theory of $C^*$-algebra extension is translated into $KK^1$-theory. In particular, a semisplit extension $\varphi$ has a lift after stabilizing by the trivial extension if and only if the $KK^1$-class $[\varphi]$ is zero. Moreover in our case we do not have to care for the stabilization of $\varphi$ because the Voiculescu theorem \cite{Voiculescu1976} ensures that $\varphi$ absorbs any trivial extension. 

In the case that $n$ is odd, it is immediately $0$ because $KK^1(C(S^{n-1}),\mbb{K})$ itself is $0$. On the other hand, $\ind \slashed{D}$ is also $0$ because $\dim M$ is odd. 

In the case that $n$ is even, we obtain an integer $\varphi ^* [\tau]  \in KK^1(C(S^{n-1}),\mbb{K}) \cong \zahl$ as the Fredholm index of $\tau \circ \varphi (u) \in Q(\mc{H})$ by Theorem 18.10.2 of \cite{Blackadar1998}. Here $u$ is the canonical generator of $KK^1(\comp ,C(S^{n-1})) \cong K_1(C(S^{n-1}))$ and its additive inverse is represented by a family of unitary matrices $u:= \sum c_1c_i x_i \in  C(S^{n-1},\End (\Delta _n^0))$ (it is a consequence of Theorem \ref{thm:Toep}). Now $\cdot \tau \circ \varphi (u)$ coincides with the principal symbol of the Dirac operator $c_1 \cdot (\slashed{D}^E)^0$ on $\Gamma (M, \slashed{S}^{E,0}(M))$ because $\slashed{S}^0_\comp(M) \cong \slashed{S}_\comp^0(M;\underline{\real}^n) \hat \otimes \Delta _n^0$.
\end{proof}

We now turn to the case of the family of Dirac operators, which is of our main interest.

Let $Z \to M \to B$ be a fiber bundle and set $n:=\dim B$. We assume that there are $Spin^c$ vector bundles $V_1,\ldots,V_l$ on $B$ and $H_1,\ldots,H_l$ on $M$ such that $\pi^*V_i \otimes H_i$ are also $Spin^c$ and the vertical tangent bundle $T_VM$ is isomorphic to their direct sum $\pi ^*V_1 \otimes H_1 \oplus \cdots \oplus \pi ^*V_l \otimes H_l$. We denote the direct sum $V_1 \oplus \cdots \oplus V_l$ by $V$ and assume $\dim V=n$. Moreover we assume that each $H_i$ is odd dimensional and decomposed as $H_i \cong H_i ^0 \oplus \underline{\real}$. Now, as is in Section \ref{section:4.1}, the spinor bundle $\slashed{\mf{S}}_{\comp ,f}(M;V):= \slashed{S}_\comp (T_VM \oplus V)$ is decomposed as
\ma{\slashed{\mf{S}}_{\comp ,f} (M ; V) \cong \bk{\slashed{S}_\comp (\pi ^*V_1 \otimes H_1^0) \otimes \cdots \otimes \slashed{S}_\comp (\pi ^*V_n \otimes H_n^0)} \hat \otimes \Cliff ( \pi ^* V).}
Hereafter we denote $\slashed{S}_{\comp ,f }^{E,0}(M;V):=\slashed{S}_\comp (\pi ^*V_1 \otimes H_1^0) \otimes \cdots \otimes \slashed{S}_\comp (\pi ^*V_n \otimes H_n^0)$. The principal symbol of the fiberwise Dirac operator $\slashed{D}_f^E$ on the twisted fiberwise spinor bundle $\slashed{\mf{S}}_{\comp ,f} ^E(M;V):=\slashed{\mf{S}}_{\comp ,f} (M;V) \otimes E$ is also decomposed as a commuting $n$-tuple twisted by $V$. Indeed, for $v=v_1 \oplus \cdots \oplus v_l$, a correspondence 
$$\sigma (\slashed{D}^E_f)_v= \sum \bk{c(v_1 \otimes e_{1,j})\xi_{ e_{1,j}}} +\cdots +\sum \bk{c(v_1 \otimes e_{l,j})\xi_{ e_{l,j}}}$$
gives the explicit decomposition. It gives a $*$-homomorphism $\sigma (\slashed{D}^E_f)_v: C(S(V)) \to C(B) \otimes Q(\mc{H})$ that is compatible with $C(B) \subset C(S(V))$ and $C(B) \otimes 1 \subset C(B) \otimes Q(\mc{H})$. In particular, when $V$ is trivial it is reduced to a $*$-homomorphism $\sigma (\slashed{D}_f^E)_v: C(S^{n-1}) \to C(B) \otimes Q(\mc{H})$.

\begin{defn}\label{def:decomp}
The fiberwise Dirac operator $\slashed{D}_f^E$ is said to be {\it $n$-decomposable} if there is a bounded commuting Fredholm $n$-tuple $\mbk{T_v (x)}$ twisted by $V$ such that each $T_v$ is a zeroth order pseudodifferential operator on $\Gamma (\slashed{S}_{\comp ,f}^{E,0} (M;V))$ whose principal symbol is $\sigma (T_v)=\sum \bk{c(v_1 \otimes e_{1,j})\xi_{ e_{1,j}}} +\cdots +\sum \bk{c(v_1 \otimes e_{l,j})\xi_{ e_{l,j}}}$.
\end{defn}

In that case $\slashed{D}^E_f (1+{\slashed{D}^E_f}^2)^{-1/2}$ coincides modulo compact operators with the smooth family of Dirac operators associated with the bounded commuting Fredholm $n$-tuples $\mbk{T_v(x)}$ twisted by $V$. Hence the $K$-class $[\ind \slashed{D}_f^E]$ is in the image of the canonical natural transform from $\tilde{k}^n(B)$ to $K^n(B)$. Moreover, the index of the Dirac operator $\slashed{D}_M^E$ on $M$ twisted by $E$, which coincides with that of $\pi ^* \slashed{\mf{D}}_B + \slashed{D}^E_f$, can be obtained from the joint spectral flow $\jsf \mbk{T_v(x)}$.

\begin{thm}\label{thm:decomp}
Let $Z \to M \to B$, $V_1,\ldots,V_l$, $H_1,\ldots ,H_l$, and $E$ be as above. Then $\slashed{D}_f^E$ is $n$-decomposable if and only if $\ind (\slashed{D}^E_f)$ is in the image of $K^n(B, B^{(n-1)}) \to K^n(B)$, or equivalently the image of $\tilde{k}^n(B) \to K^n(B)$. In that case, the equality $\ind \slashed{D}^E_M=\jsf \{ \slashed{D}^E_f \}$ holds.
\end{thm}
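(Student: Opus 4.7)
My plan is to view Theorem~\ref{thm:decomp} as the family/twisted generalization of Proposition~\ref{prp:onept}: interpret $n$-decomposability as a $C(B)$-linear lifting problem for the principal symbol map, identify the obstruction with a class in a representable $KK^1$-group, and match this obstruction with the image of $\ind(\slashed{D}_f^E)$ in the cokernel $K^n(B)/\mathrm{image}(\tilde{k}^n(B)\to K^n(B))$. The concluding identity $\ind \slashed{D}_M^E = \jsf\{\slashed{D}_f^E\}$ will then follow from Theorem~\ref{thm:jsftwisted}.

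The ``only if'' direction I would handle directly. Given a $V$-twisted bounded commuting Fredholm $n$-tuple $\{T_v(x)\}$ whose principal symbol matches $\sigma(\slashed{D}_f^E)$, the associated family of Dirac operators $\sum c(v_i)T_{v_i}(x)$ and the bounded transform $\slashed{D}_f^E(1+(\slashed{D}_f^E)^2)^{-1/2}$ differ by a norm-continuous family of compact operators on $L^2(M,\slashed{S}^{E,0}_{\comp,f}(M;V))$, so $\ind \slashed{D}_f^E = \ind\{T_v(x)\}$ in $K^n(B)$. By Theorem~\ref{thm:hom-op} together with the Thom isomorphism $\tilde{k}^V(B)\cong\tilde{k}^n(B)$ (available because $V$ is $Spin^c$), this index lies in the image of $\tilde{k}^n(B)\to K^n(B)$.

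For the converse, I would set up the $C(B)$-linear extension
\[
0 \longrightarrow \mbb{K}(\mc{H}_f) \longrightarrow \overline{\Psi^0_f} \longrightarrow \mc{A} \longrightarrow 0,
\]
with $\mc{A}$ the fiberwise principal symbol $C(B)$-algebra of zeroth-order $\Psi$DOs on $\slashed{S}^{E,0}_{\comp,f}(M;V)$. The symbol decomposition of $\slashed{D}_f^E$ furnishes a $C(B)$-linear $*$-homomorphism $\sigma:C(S(V))\to\mc{A}$, and $n$-decomposability is exactly the existence of a $C(B)$-linear lift of $\sigma$ to $\overline{\Psi^0_f}$. By the $C(B)$-linear Voiculescu absorption (with Kasparov stabilization), such a lift exists if and only if the pullback $[\sigma^*\tau]\in\mc{R}KK^1(B;C(S(V)),C(B))$ of the extension class vanishes. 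Using the cofiber sequence $C_0(V)\hookrightarrow C(D(V))\twoheadrightarrow C(S(V))$ together with $C(D(V))\simeq C(B)$, the obstruction group is identified with $\mc{R}KK^0(B;C_0(V),C(B))\cong K^V(B)\cong K^n(B)$ via the $Spin^c$ Thom class of $V$. Unraveling the Kasparov product as in the proof of Proposition~\ref{prp:onept} shows that $[\sigma^*\tau]$ corresponds under these identifications to $\ind\slashed{D}_f^E$. Finally, adding to any candidate lift the Dirac operator associated with a $V$-twisted commuting Fredholm $n$-tuple built from a class in $\tilde{k}^n(B)$ does not alter the principal symbol on $\sigma(C(S(V)))$, so only the image of $\ind\slashed{D}_f^E$ in the cokernel of $\tilde{k}^n(B)\to K^n(B)$ is a genuine obstruction; vanishing of this image is therefore equivalent to $n$-decomposability.

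The hard part will be the explicit identification of the abstract extension class $[\sigma^*\tau]$ with $\ind\slashed{D}_f^E$ after Thom inversion, i.e., a parametrized Toeplitz-type index computation over $B$ that generalizes the single-point calculation in Proposition~\ref{prp:onept}. A secondary technical issue is the $C(B)$-linear form of Voiculescu's theorem for the nonseparable symbol algebra $\mc{A}$; this should be reduced to the separable $C(B)$-subalgebra generated by $\sigma(\slashed{D}_f^E)$ before invoking absorption. Once both are in place, Theorem~\ref{thm:jsftwisted} applied to any chosen decomposition $\{T_v(x)\}$ immediately delivers
\[
\ind\slashed{D}_M^E = \ind(\pi^*\slashed{\mf{D}}_B + \slashed{D}_f^E) = \jsf\{T_v(x)\} = \jsf\{\slashed{D}_f^E\},
\]
since $\slashed{D}_M^E$ and $\pi^*\slashed{\mf{D}}_B + \slashed{D}_f^E$ share a principal symbol.
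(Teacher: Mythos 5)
Your ``only if'' direction and the final identity via Theorem \ref{thm:jsftwisted} are fine and agree with the paper, but the core of your ``if'' direction rests on a misidentification. You assert that $n$-decomposability is \emph{exactly} the existence of a $C(B)$-linear $*$-homomorphism lift of $\sigma\colon C(S(V))\to\mathcal{A}$. A multiplicative lift of the sphere-bundle algebra would send $\sum v_i^2=1$ to $\sum T_{v_i}^2=1$ on the nose, so the associated Dirac family would be invertible and its index zero; Definition \ref{def:decomp} only demands $\sum T_{v_i}^2\in 1+\mathbb{K}$, which corresponds to lifting the \emph{disk}-bundle algebra $C(\overline{\mathbb{D}}(V))$ through the symbol map --- a strictly weaker condition. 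Consequently the obstruction you compute (the class $[\sigma^*\tau]$, identified with $\ind\slashed{D}_f^E$ after Thom inversion) would prove ``decomposable iff $\ind\slashed{D}_f^E=0$,'' which is not the statement: the image of $\tilde{k}^n(B)\to K^n(B)$ is in general nonzero (it is the filtration step $F^{n-1}K^n(B)$; e.g.\ for $B=S^n$ it is everything). Your closing sentence --- that adding the Dirac operator of a tuple built from a class in $\tilde{k}^n(B)$ ``does not alter the principal symbol,'' so only the image in the cokernel matters --- is precisely the point that needs proof and cannot be waved through: a direct summand on an external Hilbert space is not a zeroth-order pseudodifferential operator on $\Gamma(\slashed{S}^{E,0}_{\comp,f}(M;V))$ with the prescribed symbol, and absorbing it requires a Voiculescu-type argument; moreover, when $\ind\slashed{D}_f^E\neq 0$ there is no ``candidate lift'' of $C(S(V))$ to modify in the first place.

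This is exactly where the paper's work lies, and none of it appears in your sketch: Lemma \ref{lem:triv} produces local lifts over trivializing closed sets (index hypothesis plus Kasparov--Voiculescu absorption); Lemma \ref{lem:glue} is an explicit Kasparov-product computation showing that the Mayer--Vietoris boundary of the difference quasihomomorphism $[\varphi_0,\varphi_1]$ of two local lifts is $[\ind\slashed{D}_f^E]$; Lemma \ref{lem:zero} glues local lifts when the index vanishes, using absorption, Cuntz quasihomomorphisms and the Choi--Effros lifting theorem over a mapping cylinder; and the proof of Theorem \ref{thm:decomp} then covers $B$ by $U^c$ and a ball, realizes the difference class over $F\simeq S^{n-1}$ by an honest $*$-homomorphism $\psi$ (using $KK(C_0(S^{n-1}\setminus\{*\}),C(F))\cong\tilde{k}^{n-1}(F)$ in this dimension), corrects $\varphi_1$ by $\tilde\psi$, and uses contractibility of $\mathbb{D}^n$ to connect $\psi\circ\iota^*$ to $0$, finally obtaining a lift of $C(\overline{\mathbb{D}}(V))$ --- i.e.\ a Fredholm, generally non-invertible, decomposition. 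A secondary inaccuracy: your identification of the obstruction group $\mathcal{R}KK^1(B;C(S(V)),C(B))$ with $K^n(B)$ via the cofiber sequence ignores the extra $K^*(B)$ terms coming from the unital part (the paper handles this by the splitting $KK(C(S^{n-1}),C(F))\cong KK(C_0(S^{n-1}\setminus\{*\}),C(F))\oplus KK(\comp,C(F))$). So while your single-operator model (Proposition \ref{prp:onept}) and the concluding application of Theorem \ref{thm:jsftwisted} are sound, the passage from ``index lies in the image of $\tilde{k}^n(B)$'' to an actual decomposition is missing, and with it the essential content of the theorem.
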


Here $B^{(n-1)}$ is the $(n-1)$-skelton of a cellular decomposition of $B$. The image of $K(B,B^{(n-1)}) \to K^n(B)$, which is the Atiyah-Hirzebruch filtered $K$-group $F^{n-1}K^n(B)$, is independent of the choice of decompositions and coincides with the image of $\tilde{k}^n(B) \to K^n(B)$ because of the functoriality of $\tilde{k}^* \to K^*$ and the fact that $\tilde{k}^n(B^{n-1})=0$.

\begin{remk}
In the proof, except for the last part, the condition that $B$ is an $n$-dimensional closed manifold is not necessary. Actually it is sufficient to be a finite CW-complex. Moreover, if $B$ is an $n$-dimensional CW-complex, the last part also holds.
\end{remk}

The proof is divided into some steps. First, we show that $\slashed{D}^E_f$ is locally $n$-decomposable.

\begin{lem}\label{lem:triv}
Let $M=B \times Z$ and $TZ \cong H_1 \oplus \cdots \oplus H_n$. If the index of the fiberwise Dirac operator $\slashed{D}_f^E$ on $\slashed{S}_\comp ^E (M;\real ^n)$ is zero, then it is $n$-decomposable.
\end{lem}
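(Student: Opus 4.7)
The plan is to adapt the obstruction-theoretic argument from the proof of Proposition \ref{prp:onept} to the family setting, replacing $\mbb{K}$ by $C(B) \otimes \mbb{K}$ and $KK^1(C(S^{n-1}), \mbb{K})$ by $KK^1(C(S^{n-1}), C(B))$. In the product case $M = B \times Z$ with $V = \underline{\real}^n$, the fiberwise principal symbol assembles into a $C(B)$-linear $*$-homomorphism
\[
\sigma := \sigma(\slashed{D}^E_f) : C(S^{n-1}) \longrightarrow C(B) \otimes Q(\mc{H}),
\]
where $\mc{H} := L^2(Z, \slashed{S}^{E,0}_\comp(Z;\underline{\real}^n))$. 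An $n$-decomposition of $\slashed{D}^E_f$ amounts, up to compact perturbation and pseudodifferential quantization, to a $C(B)$-linear lift of $\sigma$ through the pseudodifferential symbol extension
\[
0 \ra C(B) \otimes \mbb{K}(\mc{H}) \ra C(B) \otimes \Psi^0(Z, \slashed{S}^{E,0}_\comp) \ra C(B) \otimes (\Psi^0/\Psi^{-1}) \ra 0.
\]

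By the general theory of semisplit extensions (Theorem 18.10.2 of \cite{Blackadar1998}) together with a $C(B)$-linear version of Voiculescu's absorption theorem, the set of $C(B)$-linear $\Ext$-equivalence classes is in bijection with $KK^1(C(S^{n-1}), C(B))$, and the existence of a $C(B)$-linear lift of $\sigma$ is equivalent to the vanishing of $[\sigma]$ there. Using the splitting $C(S^{n-1}) \simeq \comp \oplus C_0(\real^{n-1})$ and Bott periodicity, the reduced summand of $KK^1(C(S^{n-1}), C(B))$ is identified with $K^n(B)$; explicitly, pairing $[\sigma]$ with the canonical generator $u = \sum c_1c_i x_i \in K_1(C(S^{n-1}, \End (\Delta_n^0)))$ (cf.\ the last paragraph of the proof of Proposition \ref{prp:onept}) produces an element of $KK(\comp, C(B) \hat \otimes \Cliff_n) \cong K^n(B)$.

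It then remains to identify this pairing with $\ind \slashed{D}^E_f$. The computation is parallel to the single-point case: $\sigma(u) = c_1 \cdot \sigma(\slashed{D}^E_f)^0$ is the principal symbol of the fiberwise Dirac family $c_1 \cdot (\slashed{D}^E_f)^0$ on the ungraded spinor bundle, whose $C(B)$-family index is $\ind \slashed{D}^E_f$ under the identification $\slashed{S}^0_\comp = \slashed{S}^{0}_\comp(M;\underline{\real}^n) \hat{\otimes} \Delta_n^0$. Hence $[\sigma] = 0$ in $K^n(B)$ iff $\ind \slashed{D}^E_f = 0$, and vanishing gives an $n$-decomposition.

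I expect the main obstacle to be the $C(B)$-linear Voiculescu absorption and the passage from an abstract $C(B)$-linear lift (in $C(B) \otimes \mbb{B}(\mc{H})$) to a genuine one by zeroth-order pseudodifferential operators forming an exactly commuting $n$-tuple: the former is handled by Kasparov's absorption/stabilization theorem \cite{Kasparov1980b} applied to the $C(B)$-Hilbert module $\mc{H} \otimes C(B)$, and the latter by fiberwise pseudodifferential quantization together with the fact that commutators of lifts with commuting principal symbols land in $\Psi^{-1}$, so that they can be killed by compact perturbations without altering the symbol.
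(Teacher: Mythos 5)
Your proposal takes a genuinely different route from the paper's proof of this lemma: you form the class of the family symbol extension in $KK^1(C(S^{n-1}),C(B))$, identify its reduced component with the family index, and deduce a lift from its vanishing. The paper never computes this family obstruction. It exploits the product structure $M=B\times Z$: after conjugating by the isometries $u_y$ coming from the polar part of $\id\colon TM_x\to TM_y$, the symbol extension is reduced to the constant-metric case, where it is a pullback along $B\to \pt$ of the one-point symbol extension; the hypothesis forces the numerical index of the fibre operator to vanish, so Proposition \ref{prp:onept} produces a lift at one fibre and the constant family of these lifts does the job, Kasparov's generalized Voiculescu theorem being invoked only for this standard situation.

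The genuine gap in your argument is the central equivalence ``a $C(B)$-linear lift of $\sigma$ exists iff $[\sigma]=0$ in $KK^1(C(S^{n-1}),C(B))$''. The direction you actually need (class zero implies an honest multiplicative lift on the same module) requires $\sigma$ to be an \emph{absorbing} extension: $[\sigma]=0$ only gives that $\sigma\oplus\tau$ is split for some trivial $\tau$, and stripping off $\tau$ needs $\sigma\oplus\tau\approx\sigma$. For extensions by $\mbb{K}$ this is Voiculescu's theorem, which is exactly what Proposition \ref{prp:onept} uses; for extensions by $C(B)\otimes\mbb{K}$ it fails in general, and Kasparov's theorem asserts that the standard trivial extension $\pi\otimes 1_{C(B)}$ absorbs others, not that an arbitrary unital semisplit extension absorbs it --- that is the purely-large/fibrewise-fullness issue, which you would have to verify for the symbol extension (or sidestep, as the paper does, by pulling back a one-point lift). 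Two further points. First, your closing suggestion that commutators of quantized lifts ``land in $\Psi^{-1}$ and can be killed by compact perturbations'' is not a valid step: making almost-commuting operators commute exactly by compact perturbations is precisely the problem whose obstruction is being measured (cf.\ the Toeplitz folklore at the start of Section \ref{section:5}); it is also unnecessary, since $\Psi^0$ is taken as a norm closure containing $\mbb{K}$, so a multiplicative Busby lift already consists of exactly commuting zeroth-order operators with the prescribed symbols. Second, ``an $n$-decomposition amounts to a lift of $\sigma$'' holds only in the direction you use: Definition \ref{def:decomp} requires $\sum T_v^2=1$ only modulo compacts, i.e.\ a lift from the disc algebra, which is why the correct criterion in Theorem \ref{thm:decomp} is membership in the connective filtration rather than vanishing of the index; and the identification of the reduced component of $[\sigma]$ with $\ind\slashed{D}^E_f$ (together with the vanishing of the unreduced $K^1(B)$ component by unitality, and the case of odd $n$) is essentially the content of Lemma \ref{lem:glue} and needs an argument rather than an assertion.
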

\begin{proof}
As in Proposition \ref{prp:onept}, it suffices to find a lift of the extension $\sigma (\slashed{D}^E_f)_v: C(S^{k-1}) \to C(B) \otimes C(S(TZ)) \subset C(B) \otimes Q(\mc{H})$. It exists when the metric on fibers are constant because $\sigma (\slashed{D}_f^E)_v$ is trivial and absorbable by Kasparov's generalized Voiculescu theorem \cite{Kasparov1980}. In general case, it exists becasuse $\sigma (\slashed{D}_f^E)_v|_{M_y}=u_y(\sigma (\slashed{D}^E_f)_v|_{M_x})u_y^*$ where $u_y: \pi^*\slashed{S}_\comp ^E (M_x;\real ^n) \to \pi^*\slashed{S}_\comp ^E (M_y;\real ^n)$ is the isometry induced from the polar part of the identity map $\id : TM_x \to TM_y$.
\end{proof}

Next we introduce a gluing technique of two decompositions. We can deal with that problem cohomologically by using the notion of Cuntz's quasihomomorphism \cite{Cuntz1983}. The ``difference'' of two lifts $\varphi _0, \varphi _1 : C(S(V)) \to C(B) \otimes \mbb{B}(\mc{H})$ of $\sigma (\slashed{D}^E_v)$ gives an element of representable $KK$-group \cite{Kasparov1988}
$$[\varphi _0, \varphi _1]:=\lbk{\hat{\mc{H}} \hat \otimes C(B), \pmx{\varphi _0 & 0 \\ 0 & \varphi _1} , \pmx{0 & 1 \\ 1 & 0}} \in \mc{R}KK(B;C(S(V)) , C(B) \otimes \mbb{K}). $$
In particular, in the case that $V$ is trivial, we can reduce the representable $KK$-group $\mc{R}KK(B;C(S(V)),C(B))$ by $KK(C(S^{n-1}),C(B) \otimes \mbb{K})$. Then the split exact sequence $ 0 \to C_0(S^{n-1}\setminus \mbk{*}) \to C(S^{n-1}) \xra{p} \comp \to 0$ gives an isomorphism $KK(C(S^{n-1}),C(F)) \cong KK(C_0(S^{n-1} \setminus \mbk{*}) ,C(F)) \oplus KK(\comp , C(F))$. When both of $\varphi _0$ and $\varphi _1$ are unital, $[\varphi _0 , \varphi _1] $ corresponds to $[\varphi _0,\varphi _1]|_{C(S^{n-1}\setminus \mbk{*})} \oplus 0$ under the above identification because $p^*[\varphi _0, \varphi _1]=[1,1]=0$. 

\begin{lem}\label{lem:glue}
Let $F_0,F_1$ be closed subsets of $B$ such that $B=(F_0)^\circ \cup (F_1)^\circ$ and $F:= F_0 \cap F_1$. We assume that $M$ and $E$ are trivial on $F$ and $\sigma (\slashed{D}^E_f)$ has lifts $\varphi _0$ and $\varphi _1$ on $F_0$ and $F_1$. Then the image of $[\varphi _0 , \varphi _1] \in KK(C_0(S^{n-1}\setminus \mbk{*}),\mbb{K}) \otimes C(F)) \cong K^{n-1}(F)$ by the boundary map of the Mayer-Vietoris sequence coincides with $[\ind \slashed{D}^E_f] \in K^n(B)$.
\end{lem}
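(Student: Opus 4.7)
The plan is to identify both the family index class and the Mayer--Vietoris image of $[\varphi_0, \varphi_1]$ as images of a common element at the level of $KK^1$-classes of $C^*$-extensions, and then descend via the Kasparov product with a fixed generator. After passing to the trivialization of $V$ available on $F$ (extended to $F_i$ by working in representable $KK$-theory if necessary), the symbol $\sigma(\slashed{D}^E_f)_v$ determines a semisplit extension
\begin{equation*}
0 \to C(B) \otimes \mbb{K}(\mc{H}) \to \Psi^0_f \to C(B) \otimes C(S^{n-1}) \to 0,
\end{equation*}
whose class $[\sigma] \in KK^1(C(S^{n-1}), C(B))$ pairs with the canonical generator $u \in K^1(C(S^{n-1}))$ from the proof of Proposition~\ref{prp:onept} to recover $[\ind \slashed{D}^E_f] = u \otimes [\sigma] \in K^n(B)$.

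Next I would apply the Mayer--Vietoris long exact sequence associated with the Milnor pullback $C(B) = C(F_0) \times_{C(F)} C(F_1)$ to the functor $KK^1(C(S^{n-1}), -)$. Since each $\varphi_i$ is a $*$-homomorphism lifting the symbol extension on $F_i$, the restriction of $[\sigma]$ to $F_i$ vanishes in $KK^1(C(S^{n-1}), C(F_i))$, so $[\sigma]$ lies in the image of the boundary map $\partial_{KK} : KK(C(S^{n-1}), C(F)) \to KK^1(C(S^{n-1}), C(B))$. The crux of the argument is the identification $\partial_{KK}([\varphi_0, \varphi_1]) = [\sigma]$. This is the standard extension-theoretic interpretation of the Cuntz quasihomomorphism: patching the trivial extensions provided by $\varphi_0$ and $\varphi_1$ on $F_0$ and $F_1$ respectively via the compact difference $\varphi_0 - \varphi_1$ on $F$ produces an extension on $B$ which, by Voiculescu's absorption theorem~\cite{Voiculescu1976} in its $C(B)$-linear form, is equivalent in $KK^1$ to the symbol extension.

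To conclude, naturality of the Kasparov product with $u$ yields a commuting diagram
\begin{equation*}
\xymatrix{
KK(C(S^{n-1}), C(F)) \ar[r]^{\partial_{KK}} \ar[d]_{u \otimes -} & KK^1(C(S^{n-1}), C(B)) \ar[d]^{u \otimes -} \\
K^{n-1}(F) \ar[r]^{\partial} & K^n(B),
}
\end{equation*}
so that $\partial([\varphi_0, \varphi_1]) = u \otimes \partial_{KK}([\varphi_0, \varphi_1]) = u \otimes [\sigma] = [\ind \slashed{D}^E_f]$, as required. Passing to the reduced summand $KK(C_0(S^{n-1} \setminus \{*\}), \mbb{K} \otimes C(F)) \cong K^{n-1}(F)$ loses no information because $\varphi_0$ and $\varphi_1$ are unital, so the $\comp$-component of the Cuntz pair is zero.

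The main obstacle will be rigorously justifying the central identity $\partial_{KK}([\varphi_0, \varphi_1]) = [\sigma]$: one must set up the Mayer--Vietoris sequence in $KK^1$-theory for the Milnor pullback, establish a $C(B)$-linear refinement of Voiculescu's theorem permitting the $V$-twisting, and carefully compare the Cuntz quasihomomorphism picture of $KK$ with the extension picture of $KK^1$ following~\cite{Kasparov1980}. A related technical point is handling the $V$-twist on $F_0$ and $F_1$ cleanly so that restriction to $F$ recovers the class $[\varphi_0, \varphi_1]$ exactly as stated.
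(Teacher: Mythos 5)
Your skeleton (recast the family index as the class of the fiberwise symbol extension, run Mayer--Vietoris in the second variable, and identify the boundary preimage with the Cuntz pair $[\varphi_0,\varphi_1]$) is a reasonable reformulation, but the proof's actual substance is missing: the identity $\partial_{KK}([\varphi_0,\varphi_1])=[\sigma]$ \emph{is} the lemma, merely rewritten in extension language, and you assert it by appeal to ``the standard extension-theoretic interpretation of the Cuntz quasihomomorphism'' before conceding in your closing paragraph that justifying it is the main obstacle. The patched object you describe (gluing the two lifts along $F$) lives naturally over the double mapping cylinder $I(F_0,F_1;F)$, not over $B$, and comparing its $KK^1$-class with the symbol extension is exactly the work the paper does: it transports $[\varphi_0,\varphi_1]$ through the boundary isomorphism coming from $C_0(\mathbb{D}^n\setminus\{0\})\subset C_0(\overline{\mathbb{D}^n}\setminus\{0\})$, takes the explicit Kasparov product with the Clifford--Bott generator $[C_{\mathbb{D}^n}]$ (via Proposition 18.10.1 of \cite{Blackadar1998}), deforms the resulting cycle by a concrete operator homotopy to the straight-line interpolation $t\varphi_0(C_{S^{n-1}})+(1-t)\varphi_1(C_{S^{n-1}})$ over $IF$, and then uses the homotopy equivalence $I(F_0,F_1;F)\simeq B$ to recognize the glued operator as a compact perturbation of $\slashed{D}^E_f(1+(\slashed{D}^E_f)^2)^{-1/2}$, hence representing $[\ind\slashed{D}^E_f]$. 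None of these steps is automatic, and your appeal to a ``$C(B)$-linear form of Voiculescu'' does not by itself produce the required equivalence of extensions; as written the argument is circular at its central point.

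There is also an independent degree problem in your reduction of the index class. You pair with the generator $u\in K_1(C(S^{n-1}))$ from Proposition \ref{prp:onept}; but $K_1(C(S^{n-1}))=K^1(S^{n-1})$ vanishes when $n$ is odd, so the asserted identity $[\ind\slashed{D}^E_f]=u\otimes[\sigma]$ cannot hold in general (the right-hand side is forced to be $0$ while $K^n(B)=K^1(B)$ need not vanish), and your vertical maps $u\otimes-$ land in the wrong parity. Even for $n$ even, the identity is the family analogue of the computation in Proposition \ref{prp:onept} and needs a proof (including the sign), not a citation of the single-operator case. The paper sidesteps this by never leaving the $\Cliff_n$-linear framework: the pairing is with the generator of $KK(\comp, C_0(\mathbb{D}^n)\hat\otimes\Cliff_n)$, which works uniformly in $n$. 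To repair your route you would need to replace $u$ by this Clifford-linear Bott class, prove the family statement $[\ind\slashed{D}^E_f]=$ (Bott)$\otimes[\sigma]$, and then supply an actual argument for $\partial_{KK}([\varphi_0,\varphi_1])=[\sigma]$ --- at which point you have essentially reconstructed the paper's proof.
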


\begin{proof}
From the diagram
\[
\xymatrix@C=1em{
0 \ar[r] &C_0(\mathbb{D} ^n \setminus \mbk{0}) \ar[r] \ar[d]^\iota & C_0(\overline{\mathbb{D}^n} \setminus \mbk{0}) \ar[r] \ar[d] & C(S^{n-1}) \ar[r] \ar@{=}[d] &0 \\
0 \ar[r] &C_0(\mathbb{D} ^n ) \ar[r]  & C_0(\overline{\mathbb{D}^n}) \ar[r]  & C(S^{n-1}) \ar[r] &0 \\
0 \ar[r] &C_0(\mathbb{D} ^n ) \ar[r] \ar@{=}[u] & C_0(\overline{\mathbb{D}^n} \setminus \mbk{*}) \ar[r] \ar[u] & C_0(S^{n-1} \setminus \mbk{*}) \ar[r] \ar[u] &0, \\
}
\]

we obtain a diagram of $KK$-groups 

\[
\xymatrix@C=1em{
KK^1(C_0(\mathbb{D}^n \setminus \mbk{0}),C(F)) \ar[r]_{\partial_1} ^\sim & KK^0(C(S^{n-1}),C(F))\\
KK^1(C(\mathbb{D}^n ),C(F)) \ar[u]^{\iota^*} \ar@{=}[d] \ar[r]_{\partial_2} & KK^0(C(S^{n-1}),C(F)) \ar@{=}[u] \ar[d] \\
KK^1(C(\mathbb{D}^n),C(F)) \ar[r]_{\partial_3 \ \ \ } ^{\sim \ \ \ }  & KK^0(C_0(S^{n-1} \setminus \mbk{*}) ,C(F)).\\
}
\]

Here for a $C^*$-algebra $A$, the group $KK^1(A ,C(F))$ is canonically isomorphic to $KK(A,\Sigma C(F)) \cong KK(A , C_0(\Sigma F))$.  One can see that boundary maps $\partial _1$ coincide with taking products with $[\id _\Sigma ] \in KK(\Sigma , \Sigma)$.

As a consequence we obtain 
$$\iota ^* \partial _3^{-1}[\varphi _0,\varphi _1]=\partial _1 ^{-1}[\varphi _0,\varphi _1] =[\varphi _0 \otimes \id _\Sigma, \varphi _1 \otimes \id _\Sigma].$$

Next we consider the isomorphism between $KK(C_0(\mathbb{D}^n) , C_0(\Sigma F))$ and $KK(\comp , C_0(\Sigma F) \hat \otimes \Cliff _n)$. As is in Section \ref{section:2}, this correspondence is given by taking a product with the canonical generator
$$[C_{\mathbb{D} ^n}]:=\lbk{C_0(\mathbb{D} ^n) \hat \otimes \Cliff _n , 1 , C_{\real ^n}:=\sum x_i \cdot c_i}$$
of $KK(\comp , C_0(\mathbb{D} ^n) \hat \otimes \Cliff _n)$. Restricting on $C_0(\mathbb{D}^n \setminus \mbk{0}) \hat \otimes \Cliff _n$, the operator $C_{\mathbb{D}^n}$ also defines an element $[C_{\mathbb{D}^n \setminus \mbk{0}}]$ in $KK(\comp , C(\mathbb{D}^n \setminus \mbk{0}) \hat \otimes \Cliff _n)$. When we regard the topological space $\mathbb{D}^n \setminus \mbk{0}$ as $\Sigma S^{n-1}$, the operator $C_{\mathbb{D}^n}$ is of the form $tC_{S^{n-1}}$ where $C_{S^{n-1}}:=\sum c_i \cdot x_i \in C(S^{n-1}) \hat \otimes \Cliff _n$ and $t$ is the identity function on $(0,1)$. Now the diagram 
\[
\xymatrix@C=1.0em@R=1.0em{
KK(C_0(\mathbb{D}^n),C_0(\Sigma F)) \ar[dd]^{\iota ^*} \ar[rd]^{[C_{\mathbb{D}^n}]} &\\
&KK(\comp , C_0(\Sigma F) \hat \otimes \Cliff _n)\\
KK(C_0(\mathbb{D}^n \setminus \mbk{0}) , C_0(\Sigma F)) \ar[ru]^{[C_{\mathbb{D}^n \setminus \mbk{0}}]}& \\
}
\]
commutes. As a consequence, we can compute $[C_{\mathbb{D}^n}] \otimes _{C_0(\mathbb{D}^n)} \partial _3^{-1}[\varphi _0 , \varphi _1]$ by using Proposition 18.10.1 of \cite{Blackadar1998} as follows.

\ma{ &[C_{\mathbb{D}^n}] \otimes _{C_0(\mathbb{D} ^n)} \partial _3 ^{-1}[\varphi _0,\varphi _1]\\
&=[C_{\mathbb{D}^n \setminus \mbk{0}}] \otimes _{C_0(\mathbb{D}^n \setminus \mbk{0})} \iota ^* \partial _3^{-1}[\varphi _0,\varphi _1]\\
&=[tC_{S^{n-1}}] \otimes _{C_0(\Sigma S ^{n-1})} [\varphi _0 \otimes \id_{\Sigma },\varphi _1 \otimes \id _{\Sigma }]\\
&=\lbk{\hat{\mc{H}} _{C_0(\Sigma F)}\hat \otimes \Cliff _n , 1 , \pmx{\varphi _0 (tC_{S^{n-1}}) & 0 \\ 0 & \varphi _1 (tC_{S^{n-1}})} \right. \\
& \left. \ \ \ \ \ \ \ \ \ \ \ + \pmx{1-\varphi _0 (tC_{S^{n-1}})^2 & 0 \\ 0 & 1-\varphi _1 (tC_{S^{n-1}})^2}\pmx{0 & 1 \\ 1 & 0}}\\
&=\lbk{\hat{\mc{H}} _{C_0(\Sigma F)}\hat \otimes \Cliff _n , 1 , \pmx{\varphi _0 (tC_{S^{n-1}}) & 1-\varphi _0 (tC_{S^{n-1}})^2 \\ 1-\varphi _1 (tC_{S^{n-1}})^2 & \varphi _1 (tC_{S^{n-1}})}}\\
&=\lbk{\hat{\mc{H}}_{C_0(\Sigma F)}\hat \otimes \Cliff _n , 1 , T}.
}
Here 
\[
T=\mbk{T_t}_{t \in [0,1]}:=
\begin{cases}
\pmx{\varphi _0 ((1-2t)C_{S^{n-1}}) & 1-(1-2t)^2 \\ 1-(1-2t)^2 & \varphi _0 ((1-2t)C_{S^{n-1}})} & \text{($0 \leq t \leq 1/2$),}\\
\pmx{\varphi _0 ((2t-1)C_{S^{n-1}}) & 1-\tilde  (2t-1)^2 \\ 1-(2t-1)^2 & \varphi _1 ((2t-1)C_{S^{n-1}})} & \text{($1/2 \leq t \leq 1$).}
\end{cases}
\]

Now we claim that this $KK$-class coincides with that comes from the cycle
$$\lbk{\mc{H}_{C_0(\Sigma F)}\hat \otimes \Cliff _n , 1 , t\varphi_0 (C_{S^{n-1}})+(1-t)\varphi _1 (C_{S^{n-1}})}.$$
Indeed, because $T_t - T_{1-t}$ is compact for any $t \in [0,1/2]$, the homotopy of continuous families of Fredholm operators
\[
\mf{T}_{s,t}:=
\begin{cases}
T_t & \text{($0 \leq t \leq s/2$),} \\
\frac{t-s/2}{1-s}T_{s/2}+ \frac{1-t-s/2}{1-s}T_{1-s/2} & \text{($s/2 \leq t \leq 1-s/2$),}\\
T_t & \text{($1-s/2 \leq t \leq 1$)}\\
\end{cases}
\]
connects $\mf{T}_0=T$ with
$$\mf{T}_1=\pmx{\varphi _0 (C_{S^{n-1}}) & 0 \\ 0 & t \varphi _0 (C_{S^{n-1}})+ (1-t) \varphi _1(C_{S^{n-1}})}.$$

Finally we obtain that $[\varphi _0 ,\varphi _1]$ coincides with $[t\varphi _0(C_{S^{n-1}})+(1-t)\varphi _1(C_{S^{n-1}})]$ in $KK(C_0(S^{n-1} \setminus \mbk{*}) , C(F)) \cong K^n(\Sigma F)$. Next we send it by the boundary map $\delta _{MV}$ of the Mayer-Vietoris exact sequence.

We denote by $I(F_0,F_1;F)$ the space $F_0 \sqcup IF \sqcup F_1$. The image of $\delta _{MV}$ is induced from the map $I(F_0,F_1;F) \to (I(F_0,F_1;F),F_0 \cup F_1)$ and excision. Therefore $\delta _{MV}[t\varphi _0(C_{S^{n-1}})+(1-t)\varphi _1(C_{S^{n-1}})]$ is of the form
\[
\begin{cases}
\varphi _0 (C_{S^{n-1}})_x & \text{($x \in F_0$)}\\
t\varphi _0 (C_{S^{n-1}})_x + (1-t)\varphi _1 (C_{S^{n-1}})_x & \text{($(x,t) \in IF$)}\\
\varphi _1 (C_{S^{n-1}})_x & \text{($x \in F_1$)} .\\
\end{cases}
\]
It is a lift of the pull-back of the principal symbol $\sigma (\slashed{D}^E_v)$ by the canonical projection $I(F_0,F_1;F) \to B$, which introduces the homotopy equivalence. As a consequence the above operator coincides with $\slashed{D}_f^E(1+(\slashed{D}^E_f)^2)^{-1/2}$ modulo compact operators and hence defines the same $KK$-class.
\end{proof}

\begin{lem}\label{lem:zero}
If $[\ind \slashed{D}^E_f] =0 \in K^n(B)$, then $\slashed{D}^E_f$ is $n$-decomposable.
\end{lem}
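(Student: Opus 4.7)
The plan is to build a global $n$-decomposition by patching local ones, using Lemmas \ref{lem:triv} and \ref{lem:glue} in concert with the Mayer-Vietoris exact sequence.

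First, by compactness of $B$ I would choose a finite closed cover $F_1,\ldots,F_N$ fine enough that each $F_i$ and each intersection $F_i\cap F_j$ is contained in a domain over which $V$, the fiber bundle $M$, and the coefficient bundle $E$ trivialize simultaneously.  Since the hypothesis gives $[\ind \slashed{D}^E_f]=0$ in $K^n(B)$, its image in $K^n(F_i)$ is likewise zero for every $i$.  Applying Lemma \ref{lem:triv} to these trivialized patches then produces local bounded commuting Fredholm $n$-tuples (equivalently, local $*$-homomorphism lifts of the principal symbol) $\varphi_i$ realizing $\sigma(\slashed{D}^E_f)_v$ on each $F_i$; the twisted case $V$ nontrivial is handled by the local triviality of $V$ on $F_i$.

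Next, I would glue these local lifts inductively.  Suppose we already have a global lift $\Phi_k$ over $G_k:=F_1\cup\cdots\cup F_k$.  Setting $F:=G_k\cap F_{k+1}$, the restrictions $\Phi_k|_F$ and $\varphi_{k+1}|_F$ give two lifts of the same symbol on $F$, and their Cuntz quasihomomorphism yields a class $\alpha:=[\Phi_k|_F,\varphi_{k+1}|_F]\in K^{n-1}(F)$ (or the corresponding representable $KK$-group in the twisted case).  Lemma \ref{lem:glue} identifies the Mayer-Vietoris boundary $\delta\alpha\in K^n(G_{k+1})$ with $[\ind \slashed{D}^E_f]|_{G_{k+1}}$, which vanishes by hypothesis.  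Hence by exactness of
\[
K^{n-1}(G_k)\oplus K^{n-1}(F_{k+1})\xrightarrow{\iota^*-\jmath^*}K^{n-1}(F)\xrightarrow{\delta}K^n(G_{k+1}),
\]
we may write $\alpha=a|_F-b|_F$ for some $a\in K^{n-1}(G_k)$ and $b\in K^{n-1}(F_{k+1})$.  Using Kasparov-Voiculescu absorption of trivial extensions (after suitably stabilizing the ambient Hilbert bundle), we adjust $\Phi_k$ and $\varphi_{k+1}$ by representatives of $a$ and $b$ respectively, so that the Cuntz difference of the modified lifts vanishes in $K^{n-1}(F)$.  Vanishing of the difference means the two modified lifts are stably unitarily equivalent on $F$ modulo compacts; a norm-continuous homotopy of such unitaries through a collar neighborhood of $F$ then interpolates them into a single lift $\Phi_{k+1}$ over $G_{k+1}$.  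After $N$ steps we obtain the desired global $n$-decomposition, and the identity $\ind\slashed{D}^E_M=\jsf\{\slashed{D}^E_f\}$ follows immediately from Theorem \ref{thm:jsftwisted}.

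The hardest step will be the combined modification-and-glueing: namely, showing that any prescribed class $a\in K^{n-1}(G_k)$ can be realized as the Cuntz difference between the existing lift $\Phi_k$ and a new lift $\Phi'_k$ of the same principal symbol, and then interpolating two stably equivalent lifts across the overlap through a tubular neighborhood.  The $KK$-theoretic bookkeeping is standard once the appropriate absorption theorem is invoked, but genuine care is needed to ensure that the modified operators still lie in the zeroth-order pseudodifferential calculus rather than merely in $\mathbb{B}(\mathcal{H})$; this is why Lemma \ref{lem:triv} was formulated at the $\Psi^0$ level, and one must verify that the Cuntz-type perturbations and the collar-homotopies can be chosen inside this subalgebra, which follows from density of $\Psi^0$ in its norm closure together with the standard functional-calculus smoothing argument.
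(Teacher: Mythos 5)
Your overall strategy (local lifts from Lemma \ref{lem:triv}, Cuntz difference classes on overlaps, Lemma \ref{lem:glue} plus Mayer--Vietoris exactness to correct the local lifts so that the difference class on the overlap vanishes, then induction over a finite cover) is the same as the paper's, and up to that point the argument is fine. The genuine gap is in your final gluing step. Vanishing of the quasihomomorphism class $[\Phi_k|_F,\varphi_{k+1}|_F]$ in $KK(C(S^{n-1}),C(F)\otimes\mbb{K})$ does \emph{not} mean the two lifts are stably unitarily equivalent modulo compacts --- both lifts already have the same Busby invariant, namely $\sigma(\slashed{D}^E_f)_v$; what the vanishing gives you (after using Kasparov--Voiculescu absorption, which is why the paper arranges each lift to contain a fixed absorbing trivial extension $\pi$) is only a homotopy of quasihomomorphisms from $[\psi_0|_F,\psi_1|_F]$ to a degenerate pair $[\theta,\theta]$. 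Moreover, even if you had a unitary $u$ with $u\Phi_k u^*\equiv\varphi_{k+1}$ mod compacts, conjugating by a norm-continuous path $u_t$ from $1$ to $u$ does not produce a path of lifts of the \emph{fixed} symbol: $u_t\Phi_k u_t^*$ lifts $\mathrm{Ad}(\pi(u_t))\circ\sigma(\slashed{D}^E_f)_v$, not $\sigma(\slashed{D}^E_f)_v$, so your ``collar homotopy of unitaries'' does not interpolate within the space of lifts.

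What is actually needed, and what the paper does, is to concatenate the homotopy of quasihomomorphisms into a path $\tilde\Psi_t$ of $*$-homomorphisms into $C(F)\otimes\mbb{B}(\mc{H})$ joining the two (corrected) lifts $\psi_0$ and $\psi_1$; the intermediate maps are not lifts of the symbol, so one cannot glue directly. Instead one forms the fiber-product $C^*$-algebra $D$ over the cylinder $IF$ with prescribed boundary values, pulls back the extension $0\to C_0(SF)\otimes\mbb{K}\to C(IF)\otimes\mbb{B}(\mc{H})\to D\to 0$ along the map determined by $\sigma(\slashed{D}^E_f)_v$ and $\psi_0\oplus\psi_1$, uses nuclearity of $C(S^{n-1})$ (Choi--Effros) to see this extension is invertible, observes its class vanishes because it is homotopic to the trivial extension $\pi\circ\tilde\Psi$, and concludes it splits; the resulting lift over the double mapping cylinder $I(F_0,F_1;F)$ is then pulled back along a section $B\to I(F_0,F_1;F)$ given by a partition of unity (this also sidesteps your implicit assumption that $F=G_k\cap F_{k+1}$ has a collar). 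Your concern about staying in the pseudodifferential calculus is not the real difficulty: since the norm closure of $\Psi^{-1}$ is $\mbb{K}$, any bounded lift of the symbol lies in the norm closure of $\Psi^0$ automatically; the missing content is the extension-theoretic splitting argument over the cylinder just described.
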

\begin{proof}
Let $U_1,\ldots,U_m$ be a local trivialization of the fiber bundle $M \to B$ and the vector bundles $V_1,\ldots ,V_l \to B$ such that $M$ is also trivial on $F_i:=\overline{U_i}$. By assumption and Lemma \ref{lem:triv}, $\slashed{D}^E_f$ is $n$-decomposable on each $F_i$. 

We start with the case that $B=F_0 \cup F_1$ and set $F:=F_0 \cap F_1$. First, we fix a trivial and absorbable extension $\pi : C(S^{n-1}) \to Q(\mc{H}_\pi)$ of $\mbb{K}$ by $C(S^{n-1})$ and denote by $\pi _A$ an extension $C(S^{n-1}) \to Q(\mc{H}_\pi) \to Q(\mc{H}_\pi) \otimes A$ of $C(S^{n-1})$ by $A \otimes \mbb{K}$ for a unital $C^*$-algebra $A$. 

Now we choose lifts $\varphi _0$ and $\varphi _1$ of $\sigma (\slashed{D}^E_v)$ on $F_0$ and $F_1$. By Kasparov's generalized Voiculescu theorem, $\varphi _i$'s are approximately equivalent to $\varphi \oplus \pi_{C(F_i)}$. More precisely, there are continuous families of unitaries $u_i : \mc{L}_f^2 (\slashed{S}_\comp ^E(M;V)) \to  \mc{L}_f^2 (\slashed{S}_\comp ^E(M;V)) \oplus \mc{H}_\pi \otimes C(B)$ such that $u_i (\varphi _i \oplus \pi_{C(F_i)} )u_i^* \equiv \varphi _i $ modulo compact operators. 
According to Lemma \ref{lem:glue}, $\delta_{MV} ([\varphi _0,\varphi _1])=[D_f]=0$. Hence, by exactness of the Mayer-Vietoris sequence, we have quasihomomorphisms $[\alpha _i , \beta _i]$ ($i=0,1$) such that $[\alpha _0,\beta _0]|_F-[\alpha _1,\beta _1]|_F=[\varphi _0,\varphi _1]$. Now there are unitaries $v_i$ such that $v_i (\pi_{C(F_i)} \oplus \alpha _i \oplus \alpha _i ^\perp) v_i^*\equiv \pi _{C(F_i)}$ modulo compact operators. We set 
$$\psi _i:= u_i(\varphi \oplus v_i (\pi _{C(F_i)} \oplus \beta _i \oplus \alpha _i^\perp) v_i^*)u_i^* .$$
Then $[\varphi _i , \psi _i]$ are quasihomomorphisms and $[\varphi _i,\psi _i]=[\alpha _i,\beta _i]$ in $KK(C(S^{n-1}),C(F_i) )$. 

Now $[\varphi _0,\psi _0]|_F - [\varphi _1 ,\psi _1]|_F =[\varphi _0, \varphi _1]$, which implies $[\psi _0,\psi _1]|=0$. As a consequence, there is a homotopy of quasihomomorphisms $[\Psi_0^t,\Psi _1^t]$ ($t \in [0,1]$) from $C(S^{n-1})$ to $C(F) \otimes \mbb{B}(\mc{H})$ connecting $[\psi _0 |_F , \psi _1|_F]|$ and $[\theta , \theta]$ for some $\theta$. Here we use the fact that extensions $\psi _i|_F$ contain $\pi _{C(F)}$ and hence are absorbable. 
Finally we get a homotopy $\tilde \Psi _t:=\Psi _0^{2t} (0 \leq t \leq 1/2) \Psi _1^{2-2t} (1/2 \leq t \leq 1)$ of $*$-homomorphisms from $C(S^{n-1})$ to $C(F) \otimes \mbb{B}(\mc{H})$ connecting $\psi _0$ and $\psi _1$.

Now we denote by $D$ the fiber product of $C^*$-algebras 
\[
\xymatrix@C=3.5em@R=1.5em{
D \ar[r] \ar[d] \ar@{}[rd]|\square & C(F) \otimes (B(\mc {H}) \oplus \mbb{B}(\mc{H}))  \ar[d] _{\id \otimes (p \oplus p)}\\
C(IF) \otimes Q(\mc{H}) \ar[r]^{{\rm ev}_0 \oplus {\rm ev}_1 \hspace{1.5em}} & C(F) \otimes (Q(\mc{H}) \oplus Q(\mc{H}))
}
\]
and $\tau$ the the extension
$$0 \to C_0(SF) \otimes \mbb{K} \to C(IF) \otimes \mbb{B}(\mc{H}) \to D \to 0.$$
Then $\sigma (\slashed{D}_f^E)_v$ and $(\psi _0 \oplus \psi _1)$ determine a $*$-homomorphism $\sigma :C(S^{n-1}) \to D$. Because the $C^*$-algebra $C(S^{n-1})$ is nuclear, the Choi-Effros theorem~\cite{ChoiEffros1976} implies that the pull-back $\sigma^*\tau$ is an invertible extension and hence defines an element $[\sigma ^*\tau]$ in $KK^1(C(S^{n-1}), C_0(SF) \otimes \mbb{K})$. By construction of $\tilde \Phi$, $\sigma ^*\tau$ is homotopic to the trivial extension $\pi \circ \tilde \Psi $, which implies $[\sigma ^*\tau]=0$. Consequently, $\sigma $ itself has a lift $C(S^{n-1}) \to IC(F) \otimes \mbb{B}(\mc{H})$.

Finally, we obtain a lift $\varphi$ of $\sigma (\slashed{D}_f^E)_v$ on $I(F_0,F_1;F)$. Its pull-back by a continuous section $B \to I(F_0,F_1;F)$ given by a partition of unity is a desired lift of $\sigma (\slashed{D}^E_f)_v$.

In general case we apply induction on the number of covers. We assume that there is a trivialization $B=F_1  \cup \cdots \cup F_n \cup F_{n+1}$ and set $G_0:=F_1 \cup \cdots \cup F_n$, $G_1:=F_{n+1}$. By assumption of induction, we obtain lifts $\varphi _0$ and $\varphi _1$ on $G_0$ and $G_1$. First we may assume that $V$ is trivial by restricting $\varphi _0$ the closure of an open neighborhood of $G:=G_0 \cap G_1 \subset G_0$. Now each $\varphi _i$ contains $\pi _{C(G_i)}$ by its construction. Moreover, because $M$ and $V$ are trivial on $IG$ by assumption, we can take a lift of $\sigma$ containing $\pi _{C(IG)}$. Now, the precise assertion obtained from the above argument is that if (1) $M$ and $V$ are trivial on $G$, (2) there are lifts $\varphi _i$ on $C(G_i)$ ($i=0,1$), and (3) each $\varphi_i$ is absorbable (hence it contains $\pi _{C(G_i)}$), then there is a lift $\varphi$ on $G$ containing $\pi _{C(B)}$. Hence the induction process works.
\end{proof}
Finally we prove our main theorem. Here we mention that in the above argument we restrict the case that the lift can be taken as invertible operators.
\begin{proof}[Proof of Theorem \ref{thm:decomp}]
We assume that $[\ind \slashed{D}^E_f]$ is in the image of $K^n(B,B^{(n-1)})$. Let $U \subset V$ be an inclusion of small open balls in $B$, $F_0:= U^c$, and $F_1:=\overline{V}$. Then $[\ind \slashed{D}^E_f|_{F_0}]$ and $[\ind \slashed{D}^E_f|_{F_1}]$ are $0$ by assumption and hence according to Lemma \ref{lem:zero} $\slashed{D}^E_f$ is $n$-decomposable on $F_0$ and $F_1$. Now because $F:=F_0 \cap F_1$ is homotopic to $S^{n-1}$, a group $KK(C_0(S^{n-1} \setminus \mbk{*}),C(F))$ is isomorphic to $\tilde{k}^{n-1}(F)=[C_0(\real ^{n-1}),C(F)]$. It implies that there is a $*$-homomorphism $\psi :C_0(S^{n-1} \setminus \mbk{*}) \to C(F) \otimes \mbb{K}$ such that $[\varphi _0,\varphi _1]=\Phi[\psi]$. 
Since $\varphi _1$ is absorbable, there is a unitary $u$ from $\mc{H}_{C(F)}$ to $\mc{H}_{C(F)} \oplus \mc{H}_{C(F)}$ such that $u (\varphi _1 \oplus {\rm ev}_* \cdot 1)u^* \equiv \varphi_1$ modulo compact operators. Moreover, by an argument similar to Lemma \ref{lem:zero}, we obtain a lift of $\sigma (\slashed{D}^E_f)$ on $IF$ that coincides with $\varphi _0$ on $F \times \mbk{0}$ and $u (\varphi _1 \oplus \tilde{\psi}) u^*$ on $F \times \mbk{1}$ where $\tilde \psi$ is a unital extension of $\psi$. 

The remaining part is to construct a homotopy connecting $\varphi \oplus ev_* \cdot 1$ with $\varphi \oplus \tilde \psi$. This is not realized as a family of $*$-homomorphisms on $C(S^{n-1})$ but bounded commuting Fredholm $n$-tuples. Let $\iota ^*$ be the canonical $*$-homomorphism $C(\overline{D}^n \setminus \mbk{*}) \to C_0(S^{n-1}\setminus \mbk{*})$. Then we can take a homotopy connecting $\psi \circ \iota ^*$ and $0$ since $\mathbb{D}^n$ is contractible. 

Finally, in the same way as in the proof of Lemma \ref{lem:zero}, we obtain a $*$-homomorphism $T$ that makes the following diagram commute.
\[
\xymatrix@C=4em{C(\overline{\mbb{D}}(V)) \ar[r]^{T} \ar[d] & C(B) \otimes \mbb{B}(\mc{H}) \ar[d] \\
 C(S(V)) \ar[r]^{\sigma (\slashed{D}_f^E)_v} &C(B) \otimes Q(\mc{H}).}
\]
Now $\mbk{T(x)}_v:= T(x,v)$ gives a decomposition of $\slashed{D}_f^E$.
\end{proof}

As a concluding remark, we introduce a corollary of Theorem \ref{thm:decomp}.

\begin{cor}
If $\slashed{D}_f^E$ is $n$-decomposable, then $\slashed{D}_f^{E \otimes \pi ^*F}$ is also $n$-decomposable for a complex vector bundle $F$ on $B$. Moreover in that case the following equality holds.
$$\jsf \{ \slashed{D}^{E \otimes \pi ^*F}_f \}= \dim F \cdot \jsf \{ \slashed{D}^E_f \}.$$
\end{cor}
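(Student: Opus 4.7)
The plan is to establish both claims by exhibiting $\slashed{D}_f^{E\otimes \pi^*F}$ as a product of $\slashed{D}_f^E$ with $F$ in the Fredholm model of connective $K$-theory developed in Section \ref{section:2}, then transport the identity through $j_*$.

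For the decomposability, let $\{T_v(x)\}$ be a decomposition of $\slashed{D}_f^E$ provided by the hypothesis. Fix a connection $\nabla^F$ on $F$ and take $\nabla^{E\otimes \pi^*F}:=\nabla^E\otimes 1+1\otimes \pi^*\nabla^F$ on $E\otimes \pi^*F$. Because $\pi^*\nabla^F$ is flat along fibers, $\slashed{D}_f^{E\otimes \pi^*F}$ has principal symbol $\sigma(\slashed{D}_f^E)\otimes \id_{\pi^*F}$. Setting $T'_v(x):=T_v(x)\otimes \id_{F_x}$, interpreted fiberwise on $\Gamma(Z_x,\slashed{S}^{E,0}_{\comp,f}(M;V)|_{Z_x}\otimes F_x)$, yields a $V$-twisted family of commuting Fredholm $n$-tuples of zeroth-order pseudodifferential operators whose principal symbols match the requirements of Definition \ref{def:decomp}. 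Thus $\slashed{D}_f^{E\otimes \pi^*F}$ is $n$-decomposable.

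For the joint spectral flow identity, observe that $[\{T'_v(x)\}] \in \tilde{k}^n(B)$ (after the $Spin^c$ identification $\tilde{k}^V(B)\cong \tilde{k}^n(B)$) coincides with the external product of $[\{T_v(x)\}] \in \tilde{k}^n(B)$ and $[F] \in k^0(B)$ under the multiplicative structure on connective $K$-theory described in the remark following Theorem \ref{thm:hom-op}: tensoring an $n$-tuple family with a $0$-tuple family representing $F$ reproduces precisely the operator-level tensoring with $\id_{F_x}$. Applying the ring homomorphism $j_*:k^*\to H^*$ (Proposition \ref{prp:nat_j}) and using that $j_*[F]=\dim F\in H^0(B;\zahl)$, since $j$ replaces each labelled vector space by its dimension, we obtain
\ma{\jsf\{\slashed{D}_f^{E\otimes \pi^*F}\}=\ebk{j_*[\{T_v(x)\}]\cup j_*[F],[B]}=\dim F\cdot \jsf\{\slashed{D}_f^E\}.}

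The main subtlety will be justifying that the Fredholm-level operation $\{T_v(x)\}\mapsto \{T_v(x)\otimes \id_{F_x}\}$ really implements the $\tilde{k}^n(B)\otimes k^0(B)\to \tilde{k}^n(B)$ product. A concrete route is to embed $F$ as a subbundle of a trivial bundle $B\times \comp^N$ via a projection $p\in C(B)\otimes M_N$ and recognise $p(T_v\otimes 1_N)p$ as the standard representative of the external product in the operator-algebraic picture. Alternatively one may sidestep this entirely: by Theorem \ref{thm:decomp}, $\jsf\{\slashed{D}_f^{E\otimes \pi^*F}\}=\ind \slashed{D}_M^{E\otimes \pi^*F}$, and since $[\ind \slashed{D}_f^E]$ lies in filtration $F^nK^n(B)$, $\ch(\ind \slashed{D}_f^E)$ is concentrated in top degree $H^n(B;\quot)$, so in the projection-formula expansion of $\ind\slashed{D}_M^{E\otimes \pi^*F}$ only the term $\ch_0(F)=\dim F$ survives the pairing with $[B]$.
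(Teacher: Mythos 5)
Your argument is correct, and its core step --- realizing the twist by $\pi^*F$ as the product with $[F]\in k^0(B)$ in the connective model and pushing it through the multiplicative transformation $j_*$ of Proposition \ref{prp:nat_j}, so that only $j_*[F]=\dim F\in H^0(B;\zahl)$ survives after pairing with $[B]$ --- is precisely the ``multiplicative filtration'' fact that the paper's one-line proof invokes, so on the joint spectral flow identity you and the paper agree. Where you genuinely differ is the first claim: the paper gets $n$-decomposability of $\slashed{D}_f^{E\otimes\pi^*F}$ abstractly, by combining Theorem \ref{thm:decomp} with the statement that the image of $\tilde{k}^n(B)\to K^n(B)$ is preserved under multiplication by $K^0$-classes, whereas you construct the decomposition outright by setting $T'_v(x)=T_v(x)\otimes\id_{F_x}$, which works because $\pi^*F$ is canonically trivial along each fiber, so the $T'_v$ remain commuting, Fredholm, zeroth order pseudodifferential, with the symbols required by Definition \ref{def:decomp}. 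Your route is more constructive and avoids invoking the nontrivial ``if'' direction of Theorem \ref{thm:decomp}, at the cost of having to justify (as you flag, and correctly resolve via a projection $p$ onto $F$ inside a trivial bundle) that $T_v\mapsto T_v\otimes\id_{F_x}$ really represents the $\tilde{k}^n(B)\otimes k^0(B)\to\tilde{k}^n(B)$ product; the paper's version is shorter but rests entirely on Theorem \ref{thm:decomp}. Your fallback Chern-character argument is also sound, provided you make explicit that $\ind\slashed{D}_f^{E\otimes\pi^*F}=\ind\slashed{D}_f^E\cdot[F]$ in $K^n(B)$ and that the filtration hypothesis (via Proposition \ref{prp:ch}) forces $\ch(\ind\slashed{D}_f^E)$ to be concentrated in degree $n=\dim B$, so that only $\ch_0(F)=\dim F$ contributes to the pairing with $[B]$.
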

\begin{proof}
It follows from the fact that the connective $K$-group gives a multiplicative filtration in the $K$-group. 
\end{proof}


{\small
\bibliographystyle{jalpha}
\bibliography{handmade.bib}
}
\end{document}